\newtheorem{theorem}{Theorem}[section]
\theoremstyle{definition}
\newtheorem{definition}[theorem]{Definition}
\newtheorem{proposition}[theorem]{Proposition}
\newtheorem{remark}[theorem]{Remark}
\newtheorem{example}[theorem]{Example}
\newtheorem{examples}[theorem]{Examples}
\theoremstyle{remark}
\numberwithin{equation}{section}
\begin{document}

\title[Homomorphisms between iterated skew polynomial rings]{On homomorphisms and cv-polynomials between iterated Ore extensions}



\author{Mar\'ia Camila Ram\'irez}
\address{Universidad Nacional de Colombia - Sede Bogot\'a}
\curraddr{Campus Universitario}
\email{macramirezcu@unal.edu.co}
\thanks{}

\author{Armando Reyes}
\address{Universidad Nacional de Colombia - Sede Bogot\'a}
\curraddr{Campus Universitario}
\email{mareyesv@unal.edu.co}

\thanks{This work was supported by Faculty of Science, Universidad Nacional de Colombia - Sede Bogot\'a, Colombia [grant number 53880].}

\subjclass[2020]{16S36, 16S38, 16S80, 16W20}

\keywords{Iterated Ore extension, cv-polynomial, inner derivation}

\date{}

\dedicatory{Dedicated to the memory of Professor Nikolay A. Vavilov}

\begin{abstract}

Motivated by the study of homomorphisms and cv-polynomials presented by Rimmer \cite{Rimmer1978} in the case of Ore extensions of automorphism type, Ferrero and Kishimoto \cite{FerreroKishimoto1980} and Kikumasa \cite{Kikumasa1990} in the setting of these extensions of derivation type, and Lam and Leroy \cite{LamLeroy1992} in the context of Ore extensions of mixed type over division rings, in this paper we investigate these both notions for iterations of these extensions. We show that the iteration of some of the results presented in those papers is non-trivial, and illustrate our treatment with several noncommutative algebras.

\end{abstract}

\maketitle


\section{Introduction}

For $R$ a commutative ring, Gilmer \cite{Gilmer1968} determined all the automorphisms of the commutative polynomial ring $R[x]$ which fix $R$ elementwise. Parmenter \cite{Parmenter} extended this result to the case where $R$ is commutative and $\sigma$ is any automorphism of $R$. The case where $\sigma$ is the identity on any ring $R$ was considered by Coleman and Enochs \cite{ColemanEnochs1971} and Brewer and Rutter \cite{BrewerRutter1972}. These  three results are corollaries of Rimmer's paper \cite[Theorem 1]{Rimmer1978}, since for a unital ring (not necessarily commutative) $R$ and $\sigma$ any automorphism of $R$, he determined all the automorphisms of the Ore extension (introduced by Ore \cite{Ore1931, Ore1933}) of automorphism type $R[x;\sigma]$ that fix $R$ elementwise (he called them $R$-linear maps). As an application of his results, he investigated isomorphisms between different skew polynomial rings which preserve an underlying ring isomorphism \cite[Theorem 3]{Rimmer1978}. On the other hand, Ferrero and Kishimoto \cite{FerreroKishimoto1980} and Kikumasa \cite{Kikumasa1990} studied automorphisms of Ore extensions of derivation type $R[x;\delta]$. They discussed conditions on $f(x) \in R[x;\delta]$ for the $R$-linear map $R[x;\delta] \to R[x;\delta]$ defined by $x^k\mapsto f(x)^{k}$ to be a $R$-ring automorphism.

Related with this topic, Lam and Leroy \cite{LamLeroy1992} studied \textquotedblleft transformations\textquotedblright\ from one Ore extension of mixed type $D[x';\sigma',\delta']$ to another extension $D[x;\sigma,\delta]$ over a division ring $D$. They stated that a $D$-homomorphism $\phi$ from $D[x';\sigma', \delta']$ to $D[x;\sigma, \delta]$ is determined by $\phi(x') := p(x) \in D[x;\sigma, \delta]$. The choice of $p(x)$ must satisfy the condition  $p(x)a = \sigma'(a)p(x) + \delta'(a)$, for every $a \in D$. Since the polynomial $p(x)$ allows us to make a \textquotedblleft change of variables\textquotedblright\ (from $x'$ to $x$), they called $p(x)$ a {\em change-of-variable polynomial} (or {\em cv-polynomial} for short) respect to the quasi-derivation $(\sigma', \delta')$ on $D$. In their paper, they proved different results about these polynomials and their corresponding morphisms. For instance, they showed that if the cv-polynomial is non-constant, then it can be characterized with an appropriate quasi-derivation $(\sigma', \delta')$ on $D$ \cite[Theorem 2.12]{LamLeroy1992}. Another important result establishes when two Ore extensions are isomorphic under certain conditions \cite[Theorem 5.6]{LamLeroy1992}. Several researchers have investigated the topic of morphisms of Ore extensions in one indeterminate (e.g. \cite{ArmendarizKooPark1987, ChuangLeeLiuTsai2010, ChuangTsai2009, ChuangTsai2010, Chuang2013, Chun1993, Leroy1985, LeroyMatczuk1985, Leroy1995, Leroy2012, MartinezPenas2018, MartinezKschischang2019}, and references therein).

Motivated by the treatments described above, in this paper we explore the notions of homomorphism and cv-polynomial of iterated Ore extensions. We show that the iteration of some of their results is non-trivial, in fact, in some cases these results do not hold, and exemplify our results with several families of noncommutative algebras.

The paper is organized as follows. In Section \ref{HBIOE}, we start by recalling some preliminaries and examples of iterated Ore extensions that are necessary throughout the paper. Next, Section \ref{Homon=2} develop the theory of homomorphisms and cv-polynomials of two-step iterated Ore extensions. We introduce our proposal of cv-polynomial in Definition \ref{definitionhomooreiteretared} that extends the corresponding in Lam and Leroy's paper \cite{LamLeroy1992}. Example \ref{notgeneralization} shows that \cite[Theorem 2.12]{LamLeroy1992} is false unless we impose a additional conditions. Our extended version of this result is formulated in Theorem \ref{Theoremaversion2}. We also formulate Theorems \ref{theoreminvariann=2} and \ref{teo2.12_paran=2} that characterize quasi-derivations depending of the cv-polynomials. Then, Theorems \ref{isoOreextension} and \ref{injective_n=2} say how are the isomorphisms in the sense of these polynomials. We illustrate our results with Jordan plane $\mathcal{J}$ (Theorem \ref{autoJordan}) and its quantization (Theorem \ref{autoqJordan}). Section \ref{Homon=3} presents the study of homomorphisms and cv-polynomials of three-step iterated Ore extensions by considering the approach made in the previous section. As in Section \ref{Homon=2}, we find that the quasi-derivation $(\sigma'_i, \delta'_i)$ depends only on the choice of the cv-polynomial $p_i$ (Theorems \ref{theoreminvariann=3}, \ref{teo2.12_paran=3}, and \ref{isoOreextension=3}). Section \ref{Homocasen} contains general ideas for the study of morphisms and cv-polynomials in the case of $n$-step iterated Ore extensions. Finally, in Section \ref{future} we formulate some ideas for a possible future work concerning morphisms between noncommutative rings related with Ore extensions.

Throughout the paper, the symbol $R$ means an associative (not necessarily commutative) ring with identity. $R^{*}$ denotes the set of units of $R$, while the symbols $D$ and $\Bbbk$ denote a division ring and a field, respectively. $M_n(R)$ means the ring of matrices of size $n\times n$ with entries in the ring $R$. The term homomorphism will mean unital ring homomorphism.

\section{Preliminaries}\label{HBIOE}

We start by considering some preliminaries about iterated Ore extensions which are necessary throughout the paper.

Recall that if $\sigma: R\rightarrow R$ is an endomorphism of the ring $R$, an additive map $\delta:R\rightarrow R$ is called a \textit{$\sigma$-derivation of $R$} if	$\delta(r s)=\sigma(r) \delta(s) + \delta(r)s$, for all $r, s\in R$ (strictly speaking, this is the definition of {\em left} $\sigma$-{\em derivation}, but we will not need the concept of {\em right} $\sigma$-derivation, which is any additive map $\delta:R \to R$ satisfying the rule $\delta(rs) = \delta(r)\sigma(s) + r\delta(s)$). Notice that $\delta(1)=\delta(1\cdot 1)=\sigma(1)\delta(1)+\delta(1)1=2\delta(1)$, whence $\delta(1)=0$. The pair $(\sigma, \delta)$ is called a {\em quasi-derivation on} $R$ \cite[Definition 3.1]{BuesoTorrecillasVerschoren2003}.

Following Ore \cite{Ore1931, Ore1933} (see also \cite[p. 36]{GoodearlWarfield2004}), $S:=R[x;\sigma,\delta]$ is said to be an {\em Ore extension} of $R$ or a {\em skew polynomial ring of} $R$, and it consists of left polynomials $\sum_{i=0}^{n}r_ix^{i}$, $r_i\in R\ (x^0 := 1)$, which are added in the usual way and multiplied according to the rule $xr = \sigma(r)x + \delta(r)$, for all $r\in R$. For any non-zero element $f = \sum_{i=0}^{n}r_ix^{i} \in R[x; \sigma, \delta]$ with $r_n\neq 0$, the non-negative integer $n$ is called the {\it degree} of $f$, abbreviated ${\rm deg}(f)$, and the element $r_n$ is called the {\it leading coefficient} of $f$. The zero element of $R[x; \sigma, \delta]$ is defined to have degree $-\infty$ and leading coefficient $0$. If $\delta:= 0$, then the expression $R[x;\sigma]$ denotes the {\em Ore extension of endomorphism type}, and when $\sigma$ is the identity on $R$, $R[x; \delta]$ denotes the {\em Ore extension of derivation type}. Since its introduction, ring-theoretical, homological, and geometrical properties of Ore extensions have been investigated extensively in the literature (e.g., \cite{BrownGoodearl2002, BuesoTorrecillasVerschoren2003, Cohn1995, Fajardoetal2020, GoodearlLetzter1994, GoodearlWarfield2004, McConnellRobson2001, NinoRamirezReyes2020, NinoReyes2023, ReyesSarmiento2022, ReyesSuarez2020}, and references therein).

The construction of Ore extensions can be iterated. Indeed, if we consider $S_1 = R[x_1;\sigma_1, \delta_1]$ for a quasi-derivation $(\sigma_1,\delta_1)$ on $R$, and we assume that $(\sigma_2,\delta_2)$ is a quasi-derivation on $S_1$, then one may define the two-step iterated Ore extension
\[
S_2 = S_1[x_2;\sigma_2, \delta_2] = R[x_1;\sigma_1, \delta_1][x_2;\sigma_2, \delta_2],
\]
where the relations $x_i r = \sigma_i(r)x_i + \delta_i(r),\ i = 1, 2$, and $x_2 x_1 = \sigma_2(x_1)x_2 + \delta_2(x_1)$ hold. Similarly, if we assume that $(\sigma_3, \delta_3)$ is a quasi-derivation on $S_2$, then one can define the three-step iterated Ore extension
\[
S_3 = S_2[x_3;\sigma_3,\delta_3] = S_1[x_2;\sigma_2, \delta_2] [x_3;\sigma_3,\delta_3] = R[x_1;\sigma_1, \delta_1][x_2;\sigma_2, \delta_2] [x_3;\sigma_3,\delta_3],
\]
under relations $x_i r = \sigma_i(r)x_i + \delta_i(r),\ i = 1, 2, 3$, and $x_j x_i = \sigma_j(x_i)x_j + \delta_j(x_i),\ 1\le i < j\le 3$ hold. Repeating this procedure, one obtains the so-called {\em iterated Ore extensions}.

Next, we present some interesting noncommutative algebras that can be expressed as iterated Ore extensions which will be taken up later to illustrate our results.

\begin{example}\label{examplesOreiterated}
\begin{itemize}
\item [\rm (i)] The {\em Jordan plane} $\mathcal{J}(\Bbbk)$ introduced by Jordan \cite{Jordan2001} is the free $\Bbbk$-algebra defined by $\mathcal{J}:=\Bbbk\{x,y\}/\langle yx-xy-y^2\rangle$. An easy computation shows that $\mathcal{J}(\Bbbk)$ can be expressed as  $\Bbbk[y][x; \delta_2]$ with $\delta_2(y) = -y^2$ (c.f. \cite[Section 1]{Shirikov2005}). 

\item[\rm (ii)] Following \cite[Definition 7.41]{MansourSchork2015}, for an element $q\in \Bbbk^{*}$, the $q$-\textit{skew Jordan plane} $\mathcal{J}_q(\Bbbk)$ is defined as the free algebra $\Bbbk\{ x,y\} / \langle qyx - xy - y^2 \rangle$. This algebra can be written as the Ore extension $\Bbbk[y][x;\sigma_2, \delta_2]$ with $\sigma_2(y) = qy$ and $\delta_2(y) = -y^2$. 

\item [\rm (iii)] Let $p\in \Bbbk^{*}$. The \textit{single parameter quantum matrix algebra} $O_p(M_n(\Bbbk))$ has generating basis the set of indeterminates $\{x_{ij}\},\ 1\le i, j\le n$, subject to the relations given by
\[
x_{ij}x_{lm} = \begin{cases}
px_{lm}x_{ij}, & i > l, j = m,\\
px_{lm}x_{ij}, & i = l, j > m,\\
x_{lm}x_{ij}, & i > l, j < m,\\
x_{lm}x_{ij} + (p - p^{-1})x_{im}x_{lj}, & i > l, j > m.
\end{cases}
\] 

It can be seen that $
O_p(M_2(\Bbbk)) \cong \Bbbk[x_{11}][x_{12};\sigma_{12}][x_{21};\sigma_{21}][x_{22};\sigma_{22},\delta_{22}]$, with the maps given by 
\begin{align*}
    \sigma_{12}(x_{11}) &= px_{11}, \ \ \ \ \ \ \ \
     \sigma_{21}(x_{11}) = px_{11},\\
    \sigma_{22}(x_{21}) &= px_{21},\ \ \ \ \ \ \ \
    \sigma_{22}(x_{12}) = px_{12},\\
    \delta_{22}(x_{11}) &= (p- p^{-1})x_{21}x_{12}.
\end{align*}

Otherwise, the value of $\sigma_{ij}$ is the identity map, and for $1 \leq i,j \leq 2$, the maps $\delta_{ij}$ are zero \cite[Exercise 2V]{GoodearlWarfield2004}.

\end{itemize}
\end{example}

For a ring $R$, $\sigma$ an endomorphism of $R$ and an element $a\in R$, it is easy to see that the rule $\delta_{a,\sigma}(r) := ar - \sigma(r)a$, for every $r\in R$, defines a $\sigma$-derivation of $R$, and hence $(\sigma, \delta_{a, \sigma})$ is a quasi-derivation on $R$. $\delta_{a,\sigma}$ is called a $\sigma$-{\em inner derivation by} $a$. Any derivation on $R$ which is not an inner derivation is called an {\em outer derivation} \cite[p. 31]{GoodearlWarfield2004}.

A polynomial $f(x) \in R[x; \sigma, \delta]$ is said to be {\it right invariant} if $f(x)R[x; \sigma, \delta] \subseteq R[x; \sigma, \delta]f(x)$, and {\it right semi-invariant} if $f(x)R \subseteq Rf(x)$ \cite[p. 84]{LamLeroy1992}. Lemonnier's Theorem states that the Ore extension $\Bbbk[x;\sigma, \delta]$ has a non-constant semi-invariant polynomial if and only if the $\sigma$-derivation $\delta$ is quasi-algebraic, which means that $\delta$ satisfies one of the following equivalent conditions: there exist constants $b_i \in \Bbbk$, with $b_n = 1$, such that $\sum_{i=1}^{n}b_i\delta^i$ is a $\sigma^n$-inner derivation. (2) There exist constants $b_1, \ldots, b_n$, not all zero, such that $\sum_{i=1}^{n} b_i\delta^i$ is a $\sigma'$-inner derivation for some endomorphism $\sigma' \in \Bbbk$ \cite[Theorem 4.7]{LamLeroy1992}.

\section{Two-step iterated Ore extensions}\label{Homon=2}

We start with Definition \ref{definition1iterated} which is a direct extension of \cite[Definition 2.4]{LamLeroy1992}. As we will see, the ring of coefficients $R[x_1;\sigma_1,\delta_1]$ is the same for both extensions, and the cv-polynomial only acts on the second indeterminate. As expected, in this situation we get similar results to those obtained by Lam and Leroy \cite{LamLeroy1992}. 

\begin{definition}\label{definition1iterated}
Let $S' = R[x_1;\sigma_1,\delta_1][x'_2; \sigma'_2, \delta'_2]$ and $S = R[x_1;\sigma_1,\delta_1][x_2; \sigma_2, \delta_2]$ be two iterated Ore extensions. Consider the homomorphism $\phi: S' \to S$ given by $\phi(x_1) = x_1$ and $\phi(x'_2):= p(x_2) \in S$,  with $p(x_2)$ satisfying 
\[
\phi(x'_2x_1) = \sigma'_2(x_1)p(x_2) + \delta'_2(x_1)\quad {\rm and}\quad \phi(x'_2x_1) = p(x_2)x_1,
\]

i.e.,  
\begin{equation}\label{conditiontwoOre0}
   p(x_2)x_1 = \sigma'_2(x_1)p(x_2) + \delta'_2(x_1).
\end{equation}

Besides,
\[
x_1 r = \sigma_1(r)x_1 + \delta_1(r),\quad {\rm and}\quad p(x_2) r = \sigma'_2(r)p(x_2) + \delta'_2(r).
\]

We say that $p(x_2)$ is a {\em cv-polynomial for two-step iterated Ore extensions} respect to the quasi-derivation $(\sigma'_2, \delta'_2)$ on $R[x_1;\sigma_1,\delta_1]$.

Conversely, if a polynomial $p(x_2)\in S$  satisfies (\ref{conditiontwoOre0}) for a quasi-derivation $(\sigma'_2, \delta'_2)$ on $R[x_1;\sigma_1,\delta_1]$, then we define the homomorphism $\phi:S' \to S$ by $\phi(x'_2) = p(x_2)$.
\end{definition}

Let us see some easy examples that illustrate Definition \ref{definition1iterated}. Before, recall that for $u \in R^*$, $\sigma_u$ denotes the {\it inner automorphism of} $R$ {\em associated with} $u$ defined by $\sigma_u(r) = uru^{-1}$, for all $r \in R$. 

\begin{example}\label{firstexamples}
Let $S' =R[x_1;\sigma_1,\delta_1][x'_2; \sigma'_2, \delta'_2]$ and $S = R[x_1;\sigma_1,\delta_1][x_2; \sigma_2, \delta_2]$ as in Definition \ref{definition1iterated}.
\begin{enumerate}
\item [\rm (i)] If $p(x_2) := c \in R$, then $p(x_2)x_1 = cx_1$ and $p(x_2)r = cr = \sigma'_2(r)c + \delta'_2(r)$, whence if we take $\delta'_2 := \delta_{2(c,\sigma'_2)}(r) = cr - \sigma'_2(r)c$, it follows that $p(x_2)$ is a cv-polynomial. 

\item [\rm (ii)] Let $p(x_2) = ax_2 + b \in S$. Then
\[
p(x_2)x_1 = (ax_2 + b)x_1 = ax_2x_1 + bx_1 = a\sigma_2(x_1)x_2 + a\delta_2(x_1) + bx_1,
\]
    
and
\[
\sigma'_2(x_1)(ax_2 + b) + \delta'_2(x_1) = \sigma'_2(x_1)(ax_2) + \sigma'_2(x_1)b + \delta'_2(x_1).
\]

If $\sigma'_2(x_1) = \sigma'_{2(a, \sigma_2)}(x_1) := (\sigma_a \circ \sigma_2)(x_1) =a\sigma_2(x_1)a^{-1}$ and $\delta'_2 = a\delta_2 + \delta_{2(b,\sigma'_2)}$, an easy computation shows that $p(x_2)$ is a cv-polynomial. 

\item [\rm (iii)] Let $p(x_2) = x_2^2 + c \in S$. Suppose that $\sigma'_2 = \sigma_{2}^2$, $\delta'_2 = \delta_2^2 + \delta_{2(c,\sigma'_2)}$, and $ \sigma_2\delta_2 = -\delta_2\sigma_2 $. Since
\begin{align*}
    p(x_2)x_1 &= (x_2^2 + c)x_1 = x_2^2x_1 + cx_1\\
    &= \sigma_2^2(x_1)x_2^2 + \delta_2\sigma_2(x_1)x_2 + \sigma_2\delta_2(x_1)x_2 + \delta_2^2(x_1) + cx_1\\
    &= \sigma_2^2(x_1)x_2^2 + \delta_2^2(x_1) + cx_1, 
\end{align*}

and 
\begin{align*}
    \sigma'_2(x_1)p(x_2) + \delta'_2(x_1) &= \sigma'_2(x_1)(x^2_2 + c) + \delta'_2(x_1) = \sigma'_2(x_1)x^2_2 +  \sigma'_2(x_1)c + \delta'_2(x_1)\\
    &= \sigma^2_2(x_1)x^2_2 + \sigma^2_2(x_1)c + \delta_2^2(x_1) + cx_1 - \sigma^2_2(x_1)c \\
    &= \sigma^2_2(x_1)x_2^{2}+ \delta_2^2(x_1) + cx_1, 
\end{align*}

then $\phi(x'_2x_1) = p(x_2)x_1 = \sigma'_2(x_1)p(x_2) + \delta'_2(x_1)$, that is, $p(x_2)$ is a cv-polynomial respect to the quasi-derivation $\left(\sigma_2^2, \delta_2^2 + \delta_{2(c,\sigma'_2)}\right)$ on $R[x_1;\sigma_1,\delta_1]$.
\end{enumerate}
\end{example}

Now, we introduce the following definition of cv-polynomial more general than Definition \ref{definition1iterated}.

\begin{definition}\label{definitionhomooreiteretared}
Let $S' =R[x'_1;\sigma'_1,\delta'_1][x'_2; \sigma'_2, \delta'_2]$ and $S =R[x_1;\sigma_1,\delta_1][x_2; \sigma_2, \delta_2]$. If we consider the homomorphism $\phi: S' \to S$ given by $\phi(x'_1):= p_1(x_1) = p_1 \in R[x_1;\sigma_1, \delta_1]$ and $\phi(x'_2):= p_2(x_1, x_2) = p_2 \in S$, satisfying the relations 
\begin{align*}
    \phi(x'_2x'_1) = &\ \sigma'_2(\phi(x'_1))p_2 + \delta'_2(\phi(x'_1)) = p_2p_1,\\
    \phi(x'_1r) = &\ \sigma'_1(r)p_1 + \delta'_1(r)= p_1r,\quad {\rm for\ all}\ r\in R\\
    \phi(x'_2r) = &\ \sigma'_2(r)p_2 + \delta'_2(r) = p_2r, \quad {\rm for\ all}\ r\in R, 
\end{align*}

or equivalently,
\begin{align}
    p_2p_1 = &\ \sigma'_2(p_1)p_2 + \delta'_2(p_1),\label{conditiontwoOre1} \\
    p_1r = &\ \sigma'_1(r)p_1 + \delta'_1(r), \label{conditiontwoOre2}\\
    p_2r = &\ \sigma'_2(r)p_2 + \delta'_2(r), \label{conditiontwoOre3}
\end{align}

then we say that the polynomials $p_i$'s $(i = 1, 2)$ are the {\em cv-polynomials respect to the quasi-derivations} $(\sigma'_i, \delta'_i)$'s, $i = 1, 2$, on $R$ and $R[x_1';\sigma_1',\delta_1']$, respectively.

Conversely, if two polynomials $p_1\in R[x_1; \sigma_1, \delta_1]$ and $p_2 \in S$ satisfy conditions (\ref{conditiontwoOre1}) - (\ref{conditiontwoOre3}), where $(\sigma_1', \delta_1')$ and $(\sigma_2', \delta_2')$ are quasi-derivations on $R$ and $R[x_1';\sigma_1',\delta_1']$, respectively, then we define the homomorphism $\phi:S' \to S$ by $\phi(x'_i) = p_i$ ($i = 1, 2)$.
\end{definition}

The next example shows that constant polynomials are cv-polynomials (c.f. Example \ref{firstexamples} (i)). 

\begin{example}
Consider the Ore extensions $S'$ and $S$ as in Definition \ref{definitionhomooreiteretared}. Let $p_1 := c$ and $p_2 := d$ be elements of $R$. With the aim of showing that conditions (\ref{conditiontwoOre1}) - (\ref{conditiontwoOre3}) hold, take the inner derivation $\delta'_2(c) := \delta_{2(d,\sigma'_2)}(c) = dc - \sigma'_2(c)d$. Then 
\[
\phi(x'_2x'_1) = p_2p_1 = dc \quad {\rm and}\quad \sigma'_2(c)d + \delta'_2(c) = \sigma'_2(c)d + dc - \sigma'_2(c)d = dc.
\]

In addition, for any $r \in R$ and the inner derivation given by $\delta'_1 = \delta_{1(c,\sigma'_1)}$, it is clear that $\phi(x'_1r) = p_1r = cr$. Also, note that $\sigma'_1(r)c + \delta'_1(r) = \sigma'_1(r)c + cr - \sigma'_1(r)c = cr$. In the same way, $\phi(x'_2r) = p_2r = \sigma'_2(r)p_2 + \delta'_2(r) = dr$. These facts show that any pair of constant polynomials in $S$ correspond to a morphism in the sense of cv-polynomials. Finally, note that the values of the endomorphisms $\sigma'_i$'s are arbitrary, while the corresponding $\delta'_i$'s are determined uniquely by the values of $\sigma'_i$'s and the constant values associated with the polynomials $p_i$'s.
\end{example}

\begin{example}\label{Examplep2=c}
Throughout this example, we fix $p_2 = c \in R$ as a constant polynomial, and consider some possibilities for the polynomial $p_1$ in the indeterminate $x_1$. This case is similar to that one considered by Lam and Leroy \cite{LamLeroy1992}. Table \ref{firsttable}, contains the conditions to guarantee that we get homomorphisms and cv-polynomials in the sense of Definition \ref{definitionhomooreiteretared}.
\end{example}
{\tiny 
\begin{longtable}{|c|c|c|} 
\hline
\multicolumn{3}{ |c| }{ $\boldsymbol{p_2 = c}$ }\\
\hline 
\textbf{cv-polynomial} &  $(\sigma'_i, \delta'_i)$ & \textbf{Conditions}\\
\hline $\begin{array}{lr}  p_1 = d \ \end{array}$
&  $\delta'_1 = \delta_{1(d, \sigma'_1)}$, \quad 
$\delta'_2 = \delta_{2(c, \sigma'_2)}$&  \\
\hline $\begin{array}{lr}  p_1 = a_1x_1 + a_0 \end{array}$
& $\begin{cases}
\sigma'_1 = a_1\sigma_1a_1^{-1},\\
\delta'_1 = a_1\delta_1 + \delta_{1(a_0, \sigma'_1)}, 
\end{cases}$ \quad $\begin{cases}
\sigma'_2 = \sigma_{2c},\\
\delta'_2 =  \delta_{2(c, \sigma'_2)} 
\end{cases}$&  \\ 
\hline $\begin{array}{lr}  p_1 = a_2x_1^2 + a_0 \end{array}$
& $\begin{cases}
\sigma'_1 = a_2\sigma_1^2a_2^{-1},\\
\delta'_1 = a_2\delta_1^2 + \delta_{1(a_0, \sigma'_1)},\\
\end{cases}$ \quad $\begin{cases}
\sigma'_2 = \sigma_{2c},\\
\delta'_2 =  \delta_{2(c, \sigma'_2)} 
\end{cases}$& $\sigma_1\delta_1 = - \delta_1\sigma_1$\\
\hline $\begin{array}{lr} p_1 = a_nx_1^n + a_0 \end{array}$
& $\begin{cases}
\sigma'_1 = a_n\sigma_1^na_n^{-1},\\
\delta'_1 = a_n\delta_1^n + \delta_{1(a_{0}, \sigma'_1)},\\
\end{cases}$ \quad $\begin{cases}
\sigma'_2 = \sigma_{2c},\\
\delta'_2 =  \delta_{2(c, \sigma'_2)} 
\end{cases}$& $\sigma_1\delta_1 = - \delta_1\sigma_1$\\
\hline $\begin{array}{lr}  p_1 =f(x_1)x_1 + a_0, \\
\ \ f(x_1) = a_2x_1 + a_1 \end{array}$
& $\delta'_1 = a_2\delta_1^2 + a_1\delta_1 + \delta_{1(a_0, \sigma'_1)}$, \quad $\begin{cases}
\sigma'_2 = \sigma_{2c},\\
\delta'_2 =  \delta_{2(c, \sigma'_2)} 
\end{cases}$& $\begin{array}{lr} \ \ \ \ \ \ \sigma_1\delta_1 = - \delta_1\sigma_1, \\
f(x_1)\sigma_1(-) = \sigma'_1(-)f(x_1)\end{array}$\\ 
\hline
$\begin{array}{lr} p_1  = f(x_1)x_1 + b, \\
f(x_1) = \sum_{i=0}^{n}a_ix_1^i \end{array}$
& $\delta'_1 = \sum_{i=1}^{n}a_i\delta_1^i + \delta_{1(b, \sigma'_1)}$, \quad $\begin{cases}
\sigma'_2 = \sigma_{2c},\\
\delta'_2 =  \delta_{2(c, \sigma'_2)} 
\end{cases}$ & $\begin{array}{lr} \ \ \ \ \ \ \sigma_1\delta_1 = - \delta_1\sigma_1, \\
f(x_1)\sigma_1(-) = \sigma'_1(-)f(x_1)\end{array}$\\
\hline
$\begin{array}{lr}  p_1  =\sum_{i=0}^{n}a_ix_1^i \end{array}$
&  $\delta'_1 = \sum_{i=1}^{n}a_i\delta_1^i + \delta_{1(a_0, \sigma'_1)}$,  \quad $\begin{cases}
\sigma'_2 = \sigma_{2c}, \\
\delta'_2 =  \delta_{2(c, \sigma'_2)} 
\end{cases}$ 
&$\begin{array}{lr} \ \ \sigma_1\delta_1 = - \delta_1\sigma_1, \\
\ a_i\sigma_1^i(-) = \sigma'_1(-)a_i\end{array}$\\
\hline
\caption{cv-polynomials and quasi-derivations}
\label{firsttable}
\end{longtable}}

Recall that a $\sigma$-derivation $\delta$ of  a ring $R$ is said to be {\em algebraic} if there exists a non-zero polynomial $g(x) \in R[x;\sigma, \delta]$ such that $g(\delta) = 0$ (here, the evaluation of a polynomial $g(x)=\sum_{i=0}^{n}a_ix^i\in R$ at $\delta$ is defined to be the operator  $g(\delta) = \sum_{i=0}^{n} a_i\delta^i$ on $R[x;\sigma, \delta]$) \cite[p. 87]{LamLeroy1992}.

The following result is one of the most important formulated by Lam and Leroy.

\begin{proposition}[{\cite[Theorem 2.12]{LamLeroy1992}}]\label{LamLeroy1992Theorem2.12}
Let $p(x) = \sum_{i = 0}^{n} r_ix^{i} \in D[x;\sigma, \delta]$ be of degree $n\ge 0$, and let $(\sigma', \delta')$ be any quasi-derivation on $D$.
\begin{enumerate}
    \item [\rm (1)] If $p(x)$ is a cv-polynomial respect to $(\sigma', \delta')$, then $\delta' = (p - b_0)(\delta)$ + $\delta_{b_0, \sigma'}$, and if $n\ge 1$, then $\sigma' = I_{b_n} \circ \sigma^n$, where $I_{b_n}(r) := \sigma_{b_n}(r) = b_{n}rb_{n}^{-1}.$ 
    \item [\rm (2)] If $\delta$ is not an algebraic quasi-derivation, then $p(x)$ is a cv-polynomial respect to $(\sigma', \delta')$ if and only if $\delta' = (p - b_0)(\delta) + \delta_{b_0, \sigma'}$.
\end{enumerate}
\end{proposition}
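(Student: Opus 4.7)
The plan for part (1) is direct coefficient comparison. I would expand both sides of the cv-polynomial identity $p(x)a = \sigma'(a)p(x) + \delta'(a)$ inside $D[x;\sigma,\delta]$ and read off equations degree by degree (throughout identifying $r_i = b_i$ in the notation of the statement). By the iterated product rule, $x^i a$ has $x^i$-coefficient $\sigma^i(a)$ and constant coefficient $\delta^i(a)$, so the left-hand side has $x^n$-coefficient $b_n\sigma^n(a)$ and constant term $\sum_{i=0}^n b_i\delta^i(a) = p(\delta)(a)$, while the right-hand side has $x^n$-coefficient $\sigma'(a)b_n$ and constant term $\sigma'(a)b_0 + \delta'(a)$. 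Matching in degree $n$ (for $n \geq 1$) gives $\sigma'(a) = b_n\sigma^n(a)b_n^{-1}$, i.e.\ $\sigma' = I_{b_n}\circ\sigma^n$, and matching constant terms gives $\delta'(a) = p(\delta)(a) - \sigma'(a)b_0 = (p-b_0)(\delta)(a) + \delta_{b_0,\sigma'}(a)$.

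For part (2), the forward direction is contained in (1). For the converse, assume $\delta$ is not algebraic and $\delta' = (p-b_0)(\delta) + \delta_{b_0,\sigma'}$. The strategy is to introduce $h_a(x) := p(x)a - \sigma'(a)p(x) - \delta'(a) \in D[x;\sigma,\delta]$ and show $h_a(x) = 0$ for every $a\in D$. By construction the constant coefficient of $h_a(x)$ vanishes, so the task reduces to killing the coefficients of $x^j$ for $1 \leq j \leq n$. The plan is to feed the hypothesis into the Leibniz identity $\delta'(ab) = \sigma'(a)\delta'(b) + \delta'(a)b$: substituting the explicit formula for $\delta'$ and expanding each $\delta^i(ab)$ via the skew Leibniz rule produces, after regrouping by the number of $\delta$'s applied to $b$, a relation $\sum_{k=0}^n A_k(a)\delta^k(b) = 0$ valid for all $a,b\in D$, where every $A_k(a)$ is an explicit $D$-valued expression in $a$ built from $\sigma$, $\delta$, $\sigma'$ and the $b_i$.

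The main obstacle is harvesting the right operator identities from this master relation. Fixing $a$ and letting $b$ vary turns it into an operator equation $\sum_k A_k(a)\delta^k = 0$ on $D$; because $\delta$ is not algebraic, every coefficient $A_k(a)$ must vanish. The top-degree vanishing $A_n(a) = 0$ is exactly $b_n\sigma^n(a) = \sigma'(a)b_n$, so $\sigma' = I_{b_n}\sigma^n$, while the intermediate vanishings $A_k(a) = 0$ match precisely the vanishing of the $x^k$-coefficient of $h_a(x)$ for $1 \leq k \leq n-1$. Combined with the a priori vanishing of the constant term, this forces $h_a \equiv 0$, which is the cv-polynomial identity. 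The delicate part of the argument is the combinatorial bookkeeping in the expansion of $\delta^i(ab)$ via the skew Leibniz rule and the identification of each $A_k(a)$ with the appropriate coefficient of $h_a(x)$; the non-algebraicity of $\delta$ is the crucial input that upgrades a single $\delta'$-Leibniz identity into the full family of coefficient equations required.
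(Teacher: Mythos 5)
Your argument is correct, but note that the paper itself gives no proof of Proposition \ref{LamLeroy1992Theorem2.12}: it is quoted verbatim from Lam and Leroy, and the closest the paper comes to proving anything of this shape is the two-step generalization, Theorem \ref{teo2.12_paran=2}. Compared with that proof, your part (1) replaces the paper's evaluation homomorphism $\lambda$ (sending $x\mapsto\delta$, followed by evaluation at $1$ to isolate the constant term) by direct extraction of the $x^0$- and $x^n$-coefficients of $p(x)a-\sigma'(a)p(x)-\delta'(a)$; these are the same computation. In part (2) the paper argues that non-algebraicity of $\delta$ makes $\lambda$ injective and pulls the $\sigma'$-derivation identity for $q(\delta)=(p-b_0)(\delta)$ back to the cv-identity in $D[x;\sigma,\delta]$, whereas you expand $q(\delta)(ab)$ by the skew Leibniz rule into $\sum_{j\ge 1}A_j(a)\delta^j(b)=0$ with $A_j(a)=\sum_{i\ge j}b_if^i_j(a)-\sigma'(a)b_j$ and invoke non-algebraicity to force each $A_j(a)=0$; since $A_j(a)$ is exactly the $x^j$-coefficient of $h_a$, this is the same mechanism made explicit at the level of coefficients, and your bookkeeping checks out (the $j=0$ term cancels against $q(\delta)(a)b$, so only $j\ge1$ survives). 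One caution: the paper asserts, via Examples \ref{notgeneralization}, that this proposition is ``false'' without extra hypotheses, but those examples only show that the quasi-derivation produced by the formulas need not turn an arbitrary $p$ into a cv-polynomial (the middle coefficients impose further constraints, which is the content of Theorem \ref{Theoremaversion2}); they do not contradict the one-way implication in (1) or the biconditional in (2), whose ``if'' direction presupposes that $(\sigma',\delta')$ is already a quasi-derivation. Your proof therefore stands as written.
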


The following examples show that Proposition \ref{LamLeroy1992Theorem2.12} is false unless we impose additional conditions (see Table \ref{firsttable}). Our version of this result is formulated below in Theorem \ref{Theoremaversion2}.

\begin{examples}\label{notgeneralization}
\begin{enumerate}
\item [\rm (i)] Consider the polynomials $p_1 = a_2x_1^2 + a_1x_1 + a_0\in D[x_1;\sigma_1,\delta_1]$ and $p_2 = c\in D$. Suppose that $p_1$ is a cv-polynomial respect to the quasi-derivation $(\sigma'_1, \delta'_1)$ on $D$. Then $p_1r = \sigma'_1(r)p_1 + \delta'_1(r)$ for any $r\in D$, whence
{\small{\begin{align}\label{equLamLeroy1}
        p_1 r &= (a_2x_1^2 + a_1x_1 + a_0)r = a_2x_1^2r + a_1x_1r + a_0r \notag \\
        &= a_2\sigma_1^2(r)x_1^2 + a_2\delta_1\sigma_1(r)x_1 + a_2\sigma_1\delta_1(r)x_1 + a_2\delta_1^2(r)+ a_1\sigma_1(r)x_1 + a_1\delta_1(r) + a_0r. \notag \\
        & = a_2\sigma_1^2(r)x_1^2 + [a_2\delta_1\sigma_1(r) + a_2\sigma_1\delta_1(r) + a_1\sigma_1(r)]x_1 + a_2\delta_1^2(r) + a_1\delta_1(r) + a_0r.
\end{align}
}}
    
On the other hand, 
\begin{align}\label{equLamLeroy2}
        \sigma'_1(r)p_1 + \delta'_1(r) &= \sigma'_1(r)a_2x_1^2 + \sigma'_1(r)a_1x_1 + \sigma'_1(r)a_0 + \delta'_1(r).
\end{align}

By Proposition \ref{LamLeroy1992Theorem2.12}, $\delta'_1 = (p_1-a_0)(\delta_1)+ \delta_{1(a_0,\sigma'_1)}$. Besides, as $p_1$ has degree 2, $\sigma'_1 = \sigma_{1a_2}\circ\sigma_1^2$. By replacing this last expression in (\ref{equLamLeroy2}), we get 
\begin{align}\label{equLamLeroy3}
        \sigma'_1(r)p_1 + \delta'_1(r) &= (\sigma_{1a_2}\circ\sigma_1^2)(r)a_2x_1^2 + (\sigma_{1a_2}\circ\sigma_1^2)(r)a_1x_1 + (\sigma_{1a_2}\circ\sigma_1^2)(r)a_0 \notag \\
        &\ \ + (p_1-a_0)(\delta_1)(r)+ \delta_{1(a_0,\sigma'_1)}(r) \notag \\
        &= a_2\sigma_1^2(r)a_2^{-1}a_2x_1^2+ a_2\sigma_1^2(r)a_2^{-1}a_1x_1 + a_2\sigma_1^2(r)a_2^{-1}a_0 \notag\\
        &\ \ + a_2\delta_1^2(r) + a_1\delta_1(r) + a_0r- a_2\sigma_1^2(r)a_2^{-1}a_0\notag \\
        &= a_2\sigma_1^2(r)x_1^2+ a_2\sigma_1^2(r)a_2^{-1}a_1x_1 + a_2\delta_1^2(r) + a_1\delta_1(r) + a_0r.
\end{align}
    
Nevertheless, it is clear that expressions (\ref{equLamLeroy1}) and (\ref{equLamLeroy3}) are different. Let us see explore other examples where the coefficient ring is not a division ring.

\item [\rm (ii)] Consider the polynomial $p = a_2x^2 + a_1x + a_0 \in \mathcal{J}(\Bbbk) \cong \Bbbk[y][x;\sigma, \delta]$ (Example \ref{examplesOreiterated}(i)). Suposse that $p$ is a cv-polynomial respect to the quasi-derivation $(\sigma', \delta')$ on $\Bbbk[y]$. Then $py = \sigma'(y)p + \delta'(y)$ for $y \in \Bbbk[y]$, whence
\begin{align}\label{equLamLeroy1.1}
        p y &= (a_2x^2 + a_1x + a_0)y = a_2x^2y + a_1xy + a_0y \notag \\
        &= a_2\sigma^2(y)x^2 + a_2\delta\sigma(y)x + a_2\sigma\delta(y)x + a_2\delta^2(y)+ a_1\sigma(y)x + a_1\delta(y) + a_0y. \notag \\
        & =  a_2yx^2 - a_2y^2x - a_2y^2x + a_2y^4+ a_1yx + a_1yx - a_1y^2 + a_0y.
\end{align}

Since 
\begin{align}\label{equLamLeroy2.1}
        \sigma'(y)p_1 + \delta'(y) &= \sigma'(y)a_2x^2 + \sigma'(y)a_1x + \sigma'(y)a_0 + \delta'(y).
\end{align}
    
By Proposition \ref{LamLeroy1992Theorem2.12}, $\delta' = (p-a_0)(\delta)+ \delta_{(a_0,\sigma')}$. Besides, as $p$ has degree 2, $\sigma' = \sigma_{a_2}\circ\sigma^2$. By replacing this last expression in (\ref{equLamLeroy2}), we get
\begin{align}\label{equLamLeroy3.1}
        \sigma'(y)p + \delta'(y) &= (\sigma_{a_2}\circ\sigma^2)(y)a_2x^2 + (\sigma_{a_2}\circ\sigma^2)(y)a_1x + (\sigma_{a_2}\circ\sigma^2)(y)a_0 \notag \\
        &\ \ + (p-a_0)(\delta)(y)+ \delta_{(a_0,\sigma')}(y) \notag \\
        &= a_2\sigma^2(y)a_2^{-1}a_2x^2+ a_2\sigma^2(y)a_2^{-1}a_1x + a_2\sigma^2(y)a_2^{-1}a_0 \notag\\
        &\ \ + a_2\delta^2(y) + a_1\delta(y) + a_0y- a_2\sigma^2(y)a_2^{-1}a_0\notag \\
        &= a_2\sigma^2(y)x^2+ a_2\sigma^2(y)a_2^{-1}a_1x + a_2\delta^2(y) + a_1\delta(y) + a_0y \notag \\
        &= a_2yx^2+ a_1yx + a_2y^4 - a_1y^2 + a_0y,
\end{align}
    
and it is clear that expressions (\ref{equLamLeroy1.1}) and (\ref{equLamLeroy3.1}) are different. 

\item [\rm (iii)] Let $p = a_2x^2 + a_1x+ a_0 \in \mathcal{J}_p(\Bbbk) \cong \Bbbk[y][x;\sigma, \delta]$ (Example \ref{examplesOreiterated}(ii)). Suppose that $p$ is a cv-polynomial respect to the quasi-derivation $(\sigma', \delta')$ on $\Bbbk[y]$. Then $py = \sigma'(y)p + \delta'(y)$ for $y \in \Bbbk[y]$, and
 \begin{align}\label{equLamLeroy1.1skew}
        p y &= (a_2x^2 + a_1x + a_0)y = a_2x^2y + a_1xy + a_0y \notag \\
        &= a_2\sigma^2(y)x^2 + a_2\delta\sigma(y)x + a_2\sigma\delta(y)x \notag \\
        &\ \ + a_2\delta^2(y)+ a_1\sigma(y)x + a_1\delta(y) + a_0y \notag\\
        &= a_2q^2yx^2 - a_2qy^2x - a_2qy^2x + a_2y^4 + a_1qyx - a_1y^2 + a_0y.
    \end{align}
    
By using that
    \begin{align}\label{equLamLeroy2.1skew}
        \sigma'(y)p + \delta'(y) &= \sigma'(y)a_2x^2 + \sigma'(y)a_1x + \sigma'(y)a_0 + \delta'(y),
    \end{align}
    
Proposition \ref{LamLeroy1992Theorem2.12} asserts that $\delta' = (p-a_0)(\delta)+ \delta_{(a_0,\sigma')}$. Besides, as $p$ has degree 2, $\sigma' = \sigma_{a_2}\circ\sigma^2$, and by replacing this last expression in (\ref{equLamLeroy2.1skew}), 
    \begin{align}\label{equLamLeroy3skew}
        \sigma'(y)p + \delta'(y) &= (\sigma_{a_2}\circ\sigma^2)(y)a_2x^2 + (\sigma_{a_2}\circ\sigma^2)(y)a_1x + (\sigma_{a_2}\circ\sigma^2)(y)a_0 \notag \\
        &\ \ + (p-a_0)(\delta)(y)+ \delta_{(a_0,\sigma')}(y) \notag \\
        &= a_2\sigma^2(y)a_2^{-1}a_2x^2+ a_2\sigma^2(y)a_2^{-1}a_1x + a_2\sigma^2(y)a_2^{-1}a_0 \notag\\
        &\ \ + a_2\delta^2(y) + a_1\delta(y) + a_0y- a_2\sigma^2(y)a_2^{-1}a_0\notag \\
        &= a_2\sigma^2(y)x_1^2+ a_2\sigma^2(y)a_2^{-1}a_1x + a_2\delta^2(y) + a_1\delta(y) + a_0y \notag \\
        &= a_2q^2yx^2 + a_1q^2yx + a_2y^4 - a_1y^2 + a_0y.
    \end{align}
    
Nevertheless, it is clear that expressions (\ref{equLamLeroy1.1skew}) and (\ref{equLamLeroy3skew}) are different. 
\end{enumerate}
\end{examples}

The following theorem is the corrected version of \cite[Theorem 2.12]{LamLeroy1992} that imposes two additional conditions. This is one of the most important results of the paper.

\begin{theorem}\label{Theoremaversion2}
   Let $p_1 = \sum_{i=0}^{n}a_ix_1^i \in S$ be of degree $n \geq 0$ and $(\sigma'_1, \delta'_1)$ any quasi-derivation on $R$. If $p_1$ is a cv-polynomial respect to $(\sigma'_1, \delta'_1)$, then $\delta'_1 = (p_1 - a_0)(\delta_1) + \delta_{(a_0, \sigma'_1)}$ and if $n \geq 1$, then $\sigma_1\delta_1 = - \delta_1\sigma_1$ and $a_i\sigma_1^i = \sigma'_1a_i$ for every $i > 0$. 
\end{theorem}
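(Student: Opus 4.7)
The plan is to expand the cv-polynomial relation $p_1 r = \sigma'_1(r) p_1 + \delta'_1(r)$, which is equation \eqref{conditiontwoOre2} applied to $p_1$, as an equality in the Ore extension $R[x_1;\sigma_1,\delta_1]$, and then compare coefficients of each power of $x_1$. For the left-hand side I will use the standard iterated-commutation identity
\[
x_1^{i} r \;=\; \sum_{j=0}^{i} f_{j}^{i}(r)\, x_1^{j},
\]
where $f_{j}^{i}$ denotes the sum, over all length-$i$ ordered words in $\{\sigma_1,\delta_1\}$ having exactly $j$ letters $\sigma_1$, of the corresponding composite operator on $R$; in particular $f_{i}^{i}=\sigma_1^{i}$, $f_{0}^{i}=\delta_1^{i}$, and each $f_{j}^{i}$ is a sum of $\binom{i}{j}$ monomials in $\sigma_1$ and $\delta_1$.

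The two extremal coefficient comparisons already settle part of the conclusion. Matching $x_1^{n}$ gives $a_n\sigma_1^{n}(r) = \sigma'_1(r)\, a_n$ for every $r\in R$, that is $a_n\sigma_1^{n}=\sigma'_1 a_n$, which is the claimed equality for $i=n$. Matching $x_1^{0}$ gives
\[
a_0 r + \sum_{i=1}^{n} a_i\, \delta_1^{i}(r) \;=\; \sigma'_1(r)\,a_0 + \delta'_1(r);
\]
solving for $\delta'_1(r)$ and using the identifications $a_0 r-\sigma'_1(r)a_0=\delta_{(a_0,\sigma'_1)}(r)$ and $\sum_{i\ge 1}a_i\delta_1^{i}=(p_1-a_0)(\delta_1)$ yields the formula $\delta'_1 = (p_1-a_0)(\delta_1)+\delta_{(a_0,\sigma'_1)}$.

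The substantive content lies at the intermediate coefficients. For $0<k<n$, matching $x_1^{k}$ produces
\[
a_k\sigma_1^{k}(r) + \sum_{i=k+1}^{n} a_i\, f_{k}^{i}(r) \;=\; \sigma'_1(r)\, a_k \qquad (r\in R).
\]
I plan to proceed by descending induction on $k$, from $n-1$ down to $1$, extracting both asserted consequences at each step: $a_k\sigma_1^{k}=\sigma'_1 a_k$ and $\sigma_1\delta_1=-\delta_1\sigma_1$. The key device is to re-apply the identity at a product $rs$ and invoke the Leibniz rule $\delta_1(rs)=\sigma_1(r)\delta_1(s)+\delta_1(r)s$ together with $\sigma_1(rs)=\sigma_1(r)\sigma_1(s)$; the combination $\sigma_1\delta_1+\delta_1\sigma_1$ then appears attached to Leibniz cross-terms, distinguishable from the purely $\sigma_1^{k}$-contribution, which forces it to vanish and simultaneously isolates the coefficient relation.

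The main obstacle I anticipate is exactly this decoupling step: one operator identity per level $k$ must be shown to split into two independent statements, one about $\sigma_1,\delta_1$ and the other about the coefficient $a_k$. Handling it requires careful book-keeping of the words in $f_{k}^{i}$ together with the combinatorial simplification that occurs once $\sigma_1\delta_1 = -\delta_1\sigma_1$ is imposed: every length-$i$ word with $k$ copies of $\sigma_1$ normalises, via signed swaps, to $\pm\, \delta_1^{i-k}\sigma_1^{k}$. Once this identification is in place, the descending induction closes uniformly for $k=n-1,\dots,1$ and produces the relations $a_i\sigma_1^{i}=\sigma'_1 a_i$ for all $i>0$, completing the proof.
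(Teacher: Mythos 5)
Your overall route is the same as the paper's: expand $p_1 r$ through the operators $f_j^i$ and compare coefficients of the powers of $x_1$ in the identity $p_1 r = \sigma'_1(r)p_1+\delta'_1(r)$. The two extremal comparisons are handled correctly and match the paper; the only cosmetic difference is that the paper obtains the formula for $\delta'_1$ by citing Lam and Leroy's Theorem 2.12(1), whereas you read it off the constant term directly, which is equivalent and self-contained.

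The gap is at the intermediate degrees $0<k<n$, and it is exactly the step you flag as the main obstacle. The coefficient of $x_1^k$ gives the single operator identity $a_k\sigma_1^k(r)+\sum_{i>k}a_if_k^i(r)=\sigma'_1(r)a_k$, and the theorem requires this to split into $a_k\sigma_1^k(r)=\sigma'_1(r)a_k$ and $\sum_{i>k}a_if_k^i(r)=0$ separately. Your proposed device --- evaluate at a product $rs$ and invoke the Leibniz rule --- does not force this split. Take $n=2$, $k=1$, write $T=\sigma_1\delta_1+\delta_1\sigma_1$ and $E(r)=a_2T(r)+a_1\sigma_1(r)-\sigma'_1(r)a_1$; then $T(rs)=\sigma_1^2(r)T(s)+T(r)\sigma_1(s)$, and substituting $E(r)=0$ and $E(s)=0$ into $E(rs)=0$ collapses to $\left[a_2\sigma_1^2(r)-\sigma'_1(r)a_2\right]T(s)=0$, which is already a consequence of the top-coefficient relation: you recover a tautology, not the vanishing of $T$ or the relation for $a_1$. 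So the descending induction as described does not close. To be fair, the paper's own proof does not close it either: it groups all mixed words into a term $p_{\sigma_1,\delta_1}$ and simply declares that each bracket and $p_{\sigma_1,\delta_1}$ vanish separately, with no argument for why the single coefficient identity decouples. You have correctly located the only nontrivial point of the proof, but the mechanism you propose for it would fail as stated, and justifying the splitting requires a genuinely different idea or an additional hypothesis of the kind recorded in the paper's tables.
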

\begin{proof}
The first part of the assertion is precisely the content of Proposition \ref{LamLeroy1992Theorem2.12}. Now, if $n \geq 1$, since $p_1$ is a cv-polynomial, then for every $r \in R$, $p_1r = \sigma'_1(r)p_1 + \delta'_1(r)$, i.e., $p_1 r - \sigma'_1(r)p_1 = \delta'_1(r)$, and hence
{\small{
\begin{align}\label{equa212}
       \sum_{i=0}^{n}a_ix_1^i r - \sigma'_1(r)\sum_{i=0}^{n}a_ix_1^i &= [a_n\sigma_1^n(r) - \sigma'_1(r)a_n]x_1^n + [a_{n-1}\sigma_1^{n-1}(r) - \sigma'_1(r)a_{n-1}]x_1^{n-1} \notag \\
       &\ \ + \cdots + [a_1\sigma_1(r) - \sigma'_1(r)a_1]x_1 - \sigma'_1(r)a_0 + p_{\sigma_1,\delta_1} \notag \\
       &\ \ + \sum_{i=1}^{n}a_i\delta_1^i(r) + a_0r = \delta'_1(r),
\end{align}
}}
    
where $p_{\sigma_1,\delta_1}$ are the possible combinations between $\sigma_1$ and $\delta_1$. Since (\ref{equa212}) is equal to $\delta'_1(r) = (p_1 - a_0)(\delta_1)(r) + \delta_{(a_0, \sigma'_1)}(r) = \sum_{i=1}^{n}a_i\delta_1^i(r) + a_0r - \sigma'_1(r)a_0$, it follows that 
{\normalsize{
\begin{align*}
    &\left[a_n\sigma_1^n(r) - \sigma'_1(r)a_n\right]x_1^n + \left[a_{n-1}\sigma_1^{n-1}(r) - \sigma'_1(r)a_{n-1}\right]x_1^{n-1}\\
    &+ \cdots + \left[a_1\sigma_1(r) - \sigma'_1(r)a_1\right]x_1 + p_{\sigma_1,\delta_1} = 0,
\end{align*}
}}

i.e., $a_i\sigma^i_1(r) = \sigma'_1(r)a_i$ for $1\leq i \leq n$ and $p_{\sigma_1, \delta_1} = 0$. Therefore,  $\sigma_1\delta_1 = - \delta_1\sigma_1$.
\end{proof}

\begin{remark}
Rimmer \cite{Rimmer1978} also imposed the condition $a_i\sigma^i_1(r) = \sigma_1(r)a_i$ in his study of $R$-linear maps between Ore extensions.
\end{remark}

\begin{example}\label{Examplep1constant}
In Table \ref{secondtable}, we consider $p_1 = c\in R$ and $p_2$ a polynomial of degree one respect to the indeterminate $x_2$. As we see, if $p_2 = ax_1x_2 + b$ then $\sigma'_2$ is determined according to one of the following possibilities: either mixed compositions cancel each other or the condition $(a_1x_1)\sigma_2 = \sigma'_2(a_1x_1)$ holds.
\end{example}

\begin{example}\label{p1degreeoneandp2arbitrary}
In Table \ref{thirdtable}, $p_1$ is a degree one polynomial respect to $x_1$, and $p_2$ is a polynomial in two indeterminates $x_1$ and $x_2$. From Example \ref{Examplep1constant} we know that $\sigma'_i$ and $\delta'_i$ are determined only by the choice of $p_i$. 
\end{example}

The results obtained in the previous examples illustrate Theorem \ref{theoreminvariann=2}. Before, it is natural to say that a polynomial $p(x_1,x_2) \in S = R[x_1; \sigma_1, \delta_1][x_2;\sigma_2, \delta_2]$ is {\it right invariant} if $p(x_1,x_2)R[x_1; \sigma_1, \delta_1] \subseteq R[x_1; \sigma_1, \delta_1]p(x_1,x_2)$, and {\it right semi-invariant} if $p(x_1,x_2)R \subseteq Rp(x_1,x_2)$.

\begin{theorem}\label{theoreminvariann=2}
    Consider $x_2^n$ a monomial right invariant of degree  $n \geq 1$. Let $p_2(x_1,x_2)$ as $p_2(x_1, x_2) = f(x_1)x_2^n + g(x_1,x_2)$, where the degree of $g(x_1,x_2)$ is less than or equal to $n$. Then: 
    \begin{itemize}
\item [{\rm (1)}] $p_2$ is a cv-polynomial respect to the quasi-derivation $(\sigma'_2, \delta'_2)$ if and only if $g(x_1,x_2)$ is a cv-polynomial respect to $(\sigma'_2, \delta'_2)$ and $f(x_1)\sigma_2^n(-) = \sigma'_2(-)f(x_1)$.
\item[{\rm (2)}] If $p_2$ is a left multiple of $f(x_1)$, then $p_2$ is a cv-polynomial if and only if is invariant. 
    \end{itemize}
\end{theorem}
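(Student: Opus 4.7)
The plan is to use the right invariance of $x_2^n$ to reduce both statements to matching $x_2$-graded coefficients of the cv-identity.

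For part (1), I would first note that right invariance of $x_2^n$ in $R[x_1;\sigma_1,\delta_1][x_2;\sigma_2,\delta_2]$ forces the commutation rule
\[
x_2^n r = \sigma_2^n(r)\,x_2^n, \qquad r\in R[x_1;\sigma_1,\delta_1],
\]
since the general expansion of $x_2^n r$ produces a polynomial of $x_2$-degree at most $n$, and the requirement that it lie in the left ideal $R[x_1;\sigma_1,\delta_1]\cdot x_2^n$ kills all lower-order $x_2$-terms. Given this, for $p_2=f(x_1)x_2^n+g(x_1,x_2)$ with $\deg_{x_2}g<n$, the cv-identity $p_2 r=\sigma'_2(r)p_2+\delta'_2(r)$ expands as
\[
f(x_1)\sigma_2^n(r)\,x_2^n + g(x_1,x_2)\,r \;=\; \sigma'_2(r)f(x_1)\,x_2^n + \sigma'_2(r)g(x_1,x_2) + \delta'_2(r).
\]
Comparing $x_2^n$-components yields the relation $f(x_1)\sigma_2^n(-)=\sigma'_2(-)f(x_1)$, and the residue (lying in $x_2$-degree $<n$) reads $g(x_1,x_2)\,r=\sigma'_2(r)g(x_1,x_2)+\delta'_2(r)$, i.e.\ $g$ is a cv-polynomial with respect to $(\sigma'_2,\delta'_2)$. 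Both implications of (1) follow by running this computation forward and backward.

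For part (2), the argument is a degree count in $x_2$. Since $p_2$ has $x_2$-degree $n\ge 1$, every nonzero element of $R[x_1;\sigma_1,\delta_1]\cdot p_2$ has $x_2$-degree at least $n$, while $\delta'_2(r)\in R[x_1;\sigma_1,\delta_1]$ has $x_2$-degree zero. If $p_2$ is a cv-polynomial, then $p_2 r=\sigma'_2(r)p_2+\delta'_2(r)$ with $\sigma'_2(r)p_2\in R[x_1;\sigma_1,\delta_1]\cdot p_2$, so the demand that $p_2 r\in R[x_1;\sigma_1,\delta_1]\cdot p_2$ forces $\delta'_2(r)=0$ for every $r$; hence cv forces invariance. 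Conversely, when $p_2$ is invariant write $p_2 r=q(r)p_2$ with $q(r)\in R[x_1;\sigma_1,\delta_1]$; the left-multiple hypothesis, combined with the fact that $f(x_1)$ is the $x_2^n$-coefficient of $p_2$, makes right multiplication by $p_2$ injective on $R[x_1;\sigma_1,\delta_1]$, so $q(r)$ is uniquely determined. The associativity identity $p_2(rs)=(p_2 r)s$ then forces $q$ to be multiplicative, and setting $\sigma'_2:=q$, $\delta'_2:=0$ exhibits $p_2$ as a cv-polynomial.

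The main technical point I expect to need care is the uniqueness of $q(r)$ in the converse of part (2): the left-multiple hypothesis on $p_2$, together with the leading-coefficient structure of $p_2$ in the variable $x_2$, is precisely what lets one cancel $p_2$ on the right from an identity of the form $q_1(r)p_2=q_2(r)p_2$. Apart from that, both parts reduce to cleanly reading off the $x_2$-graded pieces of a single identity, in the same spirit as the Lam--Leroy argument in the one-indeterminate setting.
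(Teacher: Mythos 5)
Your part (1) is essentially the paper's own argument: right invariance of $x_2^n$ forces $x_2^n h=\sigma_2^n(h)x_2^n$ for $h\in R[x_1;\sigma_1,\delta_1]$, and comparing the $x_2^n$-component of the cv-identity against the lower-degree residue yields both $f(x_1)\sigma_2^n(-)=\sigma'_2(-)f(x_1)$ and the cv-identity for $g$; the paper runs exactly this computation, with $p_1$ in the role of your general $r$. (One small mismatch: you assume $\deg_{x_2}g<n$, while the statement allows $\deg g\le n$; the comparison of $x_2^n$-components only separates cleanly when the $x_2^n$-part of $g$ is accounted for.)

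The forward direction of part (2) is where your write-up has a genuine logical flaw. You want to show that the cv-property together with the left-multiple hypothesis implies invariance, but your sentence ``the demand that $p_2r\in R[x_1;\sigma_1,\delta_1]\cdot p_2$ forces $\delta'_2(r)=0$'' invokes invariance as a hypothesis in order to kill $\delta'_2$, and then concludes invariance --- that is circular. Moreover you never use the left-multiple hypothesis (equivalently $g=0$) in this direction, and without it the implication is false. The degree count you have in mind does work, but it must be applied to the identity $\delta'_2(r)=p_2r-\sigma'_2(r)p_2$: with $g=0$ and $x_2^nr=\sigma_2^n(r)x_2^n$, the right-hand side equals $\left[f(x_1)\sigma_2^n(r)-\sigma'_2(r)f(x_1)\right]x_2^n$, concentrated in $x_2$-degree $n\ge 1$, while $\delta'_2(r)$ has $x_2$-degree at most $0$; hence both sides vanish and $p_2r=\sigma'_2(r)p_2\in R[x_1;\sigma_1,\delta_1]p_2$. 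This is what the paper does, reading $\delta'_2(p_1)=0$ off the residual equation of part (1). On the converse of (2), by contrast, you are more careful than the paper: the paper simply posits the existence of an endomorphism $\sigma'_2$ satisfying $f(x_1)\sigma_2^n(-)=\sigma'_2(-)f(x_1)$, whereas you construct $\sigma'_2:=q$ from the invariance relation $p_2r=q(r)p_2$ and derive multiplicativity from associativity. Note, however, that the right-cancellation of $p_2$ this requires is only available when the leading coefficient $f(x_1)$ is regular (e.g.\ over a domain with injective endomorphisms), a hypothesis neither you nor the theorem makes explicit.
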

\begin{proof}
\begin{itemize}
\item [\rm (1)] If $p_2$ is a cv-polynomial respect to the quasi-derivation $(\sigma'_2, \delta'_2)$. Then
\begin{align}\label{invariantpart1}
        p_2p_1 & = \sigma'_2(p_1)p_2 + \delta'_2(p_1) = \sigma'_2(p_1)[f(x_1)x_2^n + g(x_1,x_2)] + \delta'_2(p_1) \\
            & = \sigma'_2(p_1)f(x_1)x_2^n + [\sigma'_2(p_1)g(x_1,x_2) + \delta'_2(p_1)] \notag \\
       & = \left[f(x_1)x_2^n + g(x_1,x_2)\right]p_1 \notag \\
        & = f(x_1)x_2^np_1 + g(x_1,x_2)p_1 = f(x_1)\sigma_2^n(p_1)x_2^n + g(x_1,x_2)p_1, \label{invariantpart2}
    \end{align}
    
since $x_2^n$ is a monomial invariant. Both expressions (\ref{invariantpart1}) and (\ref{invariantpart2}) show that $f(x_1)\sigma^n_2(-) = \sigma'_2(-)f(x_1)$ and 
\begin{equation}\label{equationg}
   g(x_1, x_2)p_1 = \sigma'_2(p_1)g(x_1,x_2) + \delta'_2(p_1). 
\end{equation}

Now, if $g(x_1,x_2)$ is a cv-polynomial respect to $(\sigma'_2, \delta'_2)$, then we obtain  $g(x_1, x_2)p_1 = \sigma'_2(p_1)g(x_1,x_2) + \delta'_2(p_1)$, and since $f(x_1)\sigma_2^n(-) = \sigma'_2(-)f(x_1)$, 
\begin{align*}
    p_2p_1 &= f(x_1)x_2^np_1 + g(x_1,x_2)p_1 = f(x_1)\sigma_2^n(p_1)x_2^n + g(x_1,x_2)p_1\\
    &= \sigma'_2(p_1)f(x_1)x_2^n + \sigma'_2(p_1)g(x_1,x_2) + \delta'_2(p_1)\\
    &= \sigma'_2(p_1)[f(x_1)x_2^n + g(x_1,x_2)] + \delta'_2(p_1) = \sigma'_2(p_1)p_2 + \delta'_2(p_1).
\end{align*}

\item[\rm (2)] If $p_2$ is a cv-polynomial respect to $(\sigma'_2, \delta'_2)$, then $p_2p_1 = \sigma'_2(p_1)p_2 + \delta'_2(p_1)$, and having in mind that $p_2$ is a left multiple of $f(x_1)$, $g(x_1,x_2) = 0$. In this way, $\delta'_2(p_1) = 0$ due to expression (\ref{equationg}). Besides, since $f(x_1)\sigma_2^n(-) = \sigma'_2(-)f(x_1)$, then $p_2p_1 = f(x_1)\sigma_2^n(p_1)x_2^n$, which shows that $p_2$ is invariant. Now, if $p_2$ is invariant and $g(x_1,x_2) = 0$, then $p_2p_1 = f(x_1)x_2^np_1 = f(x_1)\sigma_2^n(p_1)x_2^n$. If there exists $\sigma'_2$ with $f(x_1)\sigma^n_2(p_1) = \sigma'_2(p_1)f(x_1)$, then we get that $p_2$ is a cv-polynomial respect to the quasi-derivation $(\sigma'_2, \delta'_2= 0)$.
\end{itemize}
\end{proof}

\begin{landscape}
{\normalsize{
\begin{longtable}{|c|c|c|}\hline 
\multicolumn{3}{ |c| }{\bf $p_1 = c$ and $p_2$ with degree one in the indeterminate $x_2$}\\
\hline 
\textbf{cv-polynomials} &  $(\sigma'_i, \delta'_i)$ & \textbf{Conditions}\\
\hline $\begin{array}{lr}  
p_2 = ax_2 + b \end{array}$
& $ \delta'_1 = \delta_{1(c, \sigma'_1)}$ \quad $\begin{cases}
\sigma'_2 = a\sigma_2a^{-1}, \\
\delta'_2 = a\delta_2 + \delta_{2(b, \sigma'_2)} 
\end{cases}$ &   \\ 
\hline $\begin{array}{lr} 
p_2 = ax_1x_2 + b \end{array}$ 
& $\delta'_1 = \delta_{1(c, \sigma'_1)} $ \quad $\begin{cases}
\sigma'_2 = a\sigma_1\sigma_2a^{-1}, \\
\delta'_2 = ax_1\delta_2 + \delta_{2(b, \sigma'_2)} 
\end{cases}$& $\delta_1\sigma_2 = 0$\\
\hline $\begin{array}{lr} 
p_2 = ax_1x_2 + b\end{array}$
& $\delta'_1 = \delta_{1(c, \sigma'_1)} $ \quad $\delta'_2 = ax_1\delta_2 + \delta_{2(b, \sigma'_2)} $ & $(a_1x_1)\sigma_2(-) = \sigma'_2(-)(a_1x_1)$\\
\hline $\begin{array}{lr}
p_2 = f(x_1)x_2 + b, \\
f(x_1)= a_1x_1 + a_0 \end{array}$
& $ \delta'_1 = \delta_{1(c, \sigma'_1)} $ \quad $\begin{cases}
\delta'_2 = & a_1x_1\delta_2 + a_0\delta_2 + \delta_{2(b, \sigma'_2)}, \\
\ \ \ = & f(x_1)\delta_2 + \delta_{2(b, \sigma'_2)}
\end{cases}$& $f(x_1)\sigma_2(-) = \sigma'_2(-)f(x_1)$\\
\hline $\begin{array}{lr} 
p_2 = f(x_1)x_2 + b, \\
f(x_1)= a_2x_1^2 + a_1x_1 + a_0 \end{array}$
& $ \delta'_1 = \delta_{1(c, \sigma'_1)}$ \quad $ \delta'_2 = f(x_1)\delta_2  + \delta_{2(b, \sigma'_2)} $ & $\begin{array}{lr} f(x_1)\sigma_2(-) = \sigma'_2(-)f(x_1) \end{array}$\\
\hline $\begin{array}{lr}  p_2 = f(x_1)x_2 + g(x_1), \\ 
f(x_1) = \sum_{i=0}^{n}a_ix_1^i, \\
g(x_1) = \sum_{j=0}^{m}b_jx_1^j, \end{array}$
& $\delta'_1 = \delta_{1(c, \sigma'_1)},$ \quad $ \delta'_2 = f(x_1)\delta_2 + \delta_{2(g(x_1),\sigma'_2)} $ & $\begin{array}{lr} f(x_1)\sigma_2(-) = \sigma'_2(-)f(x_1) \end{array}$\\ 
\hline
\caption{cv-polynomials and quasi-derivations}
\label{secondtable}
\end{longtable}}}
\end{landscape}

\begin{landscape}
    {\small{
\begin{longtable}{|c|c|c|}
\hline
\multicolumn{3}{ |c| }{\bf $p_1$ with degree one in $x_1$ and $p_2$ arbitrary}\\
\hline 
\textbf{cv-polynomials} &  $(\sigma'_i, \delta'_i)$ & \textbf{Conditions}\\
\hline $\begin{array}{lr} p_1 = a_1x_1 + a_0, \\
p_2 = b_1x_2 + b_0\end{array}$
& $\begin{cases}
\sigma'_1 = a_1\sigma_1a_1^{-1},\\
\delta'_1 = a_1\delta_1 +\delta_{1(a_0, \sigma'_1)} 
\end{cases}$ \quad $\begin{cases}
\sigma'_2 = b_1\sigma_2b_1^{-1},\\
\delta'_2 = b_1\delta_2 + \delta_{2(b_0, \sigma'_2)} 
\end{cases}$&  \\ 
\hline $\begin{array}{lr}  p_1 = a_1x_1 + a_0, \\
p_2 = b_1x_1x_2 + b_0\\ \end{array}$
& $\begin{cases}
\sigma'_1 = a_1\sigma_1a_1^{-1},\\
\delta'_1 = a_1\delta_1  + \delta_{1(a_0, \sigma'_1)} 
\end{cases}$ \quad $\begin{cases}
\sigma'_2 = b_1\sigma_1\sigma_2b_1^{-1}\\
\delta'_2 = b_1x_1\delta_2 + \delta_{2(b_0, \sigma'_2)} 
\end{cases}$& $\delta_1\sigma_2 = 0$ \\ 
\hline $\begin{array}{lr} p_1 = a_1x_1 + a_0, \\
p_2 = f(x_1)x_2 + c, \\
f(x_1) = b_1x_1 + b_0\\
\end{array}$
& $\begin{cases}
\sigma'_1 = a_1\sigma_1a_1^{-1},\\
\delta'_1 = a_1\delta_1 + \delta_{1(a_0, \sigma'_1)} 
\end{cases}$ \quad $\begin{cases}
\delta'_2 = b_1x_1\delta_2  + b_0\delta_2 + \delta_{2(c, \sigma'_2)} \\
\ \ \ \ = f(x_1)\delta_2 + \delta_{2(c, \sigma'_2)}
\end{cases}$& $f(x_1)\sigma_2( - ) = \sigma'_2( - )f(x_1)$ \\ 
\hline $\begin{array}{lr} p_1 = a_1x_1 + a_0, \\ p_2 = f(x_1)x_2 + g(x_1),  \\ 
f(x_1) = \sum_{i=0}^{n}a_ix_1^i, \\
g(x_1) = \sum_{j=0}^{m}b_jx_1^j\\
\end{array}$
& $\begin{cases}
\sigma'_1 = a_1\sigma_1a_1^{-1},\\
\delta'_1 = a_1\delta_1 + \delta_{1(a_0, \sigma'_1)} 
\end{cases}$ \quad $ \delta'_2 = f(x_1)\delta_2 + \delta_{2(g(x_1),\sigma'_2)}$ & $\begin{array}{lr} f(x_1)\sigma_2( - ) = \sigma'_2( - )f(x_1) \end{array}$\\ 
\hline $\begin{array}{lr}  p_1 = a_1x_1 + a_0, \\ p_2 = b_2x_2^2 + b_0  \\ 
\end{array}$
& $\begin{cases}
\sigma'_1 = a_1\sigma_1a_1^{-1},\\
\delta'_1 = a_1\delta_1 + \delta_{1(a_0, \sigma'_1)} 
\end{cases}$ \quad $\begin{cases}
\sigma'_2 = b_2\sigma^{2}_2b_2^{-1},\\
\delta'_2 = b_2\delta_2^2 + \delta_{2(b_0,\sigma'_2)}\\
\end{cases}$& $\begin{array}{lr} \ \ \ 
\sigma_2\delta_2 = - \delta_2\sigma_2\\
b_2\sigma_2^2( - ) = \sigma'_2( - )b_2\end{array}$\\
\hline $\begin{array}{lr}  p_1 = a_1x_1 + a_0, \\ p_2 = b_1x_1x_2^2 + b_0  \\ 
\end{array}$
& $\begin{cases}
\sigma'_1 = a_1\sigma_1a_1^{-1}, \\
\delta'_1 = a_1\delta_1 +\delta_{1(a_0, \sigma'_1)} 
\end{cases}$ \quad $
\delta'_2 = b_1x_1\delta_2^2 + \delta_{2(b_0,\sigma'_2)} $& $\begin{array}{lr} \ \ \ \ \ \ \sigma_2\delta_2 = - \delta_2\sigma_2\\
b_1x_1\sigma_2^2(-) = \sigma^{'}_2(-)b_1x_1\end{array}$\\ 
\hline $\begin{array}{lr}  p_1 = a_1x_1 + a_0, \\ p_2 = f(x_1)x_2^n + g(x_1)  \\ 
f(x_1) = \sum_{i=0}^{n}a_ix_1^i\\
g(x_1) = \sum_{j=0}^{m}b_jx_1^j\\
\end{array}$
& $\begin{cases}
\sigma'_1 = a_1\sigma_1a_1^{-1}, \\
\delta'_1 = a_1\delta_1 + \delta_{1(a_0, \sigma'_1)} 
\end{cases}$ \quad $
\delta'_2 = f(x_1)\delta_2^n +  \delta_{2(g(x_1),\sigma'_2)}$ & $\begin{array}{lr} \ \ \ \ \ \ \sigma_2\delta_2 = - \delta_2\sigma_2\\
f(x_1)\sigma_2^n(-) = \sigma^{'}_2(-)f(x_1)\end{array}$\\ 
\hline $\begin{array}{lr} p_1 = a_1x_1 + a_0 , \\ p_2 = f(x_2)x_2 + g(x_1)  \\ 
f(x_2) = \sum_{i=0}^{n}a_ix_2^i\\
g(x_1) = \sum_{j=0}^{m}b_jx_1^j\\
\end{array}$
& $\begin{cases}
\sigma'_1 = a_1\sigma_1a_1^{-1}, \\
\delta'_1 = a_1\delta_1 + \delta_{1(a_0, \sigma'_1)} 
\end{cases}$ \quad $ \delta'_2 = \sum_{i=0}^{n} a_i\delta_2^i + \delta_{2(g(x_1),\sigma'_2)} $ & $\begin{array}{lr} \ \ \ \ \ \ \sigma_2\delta_2 = - \delta_2\sigma_2\\
f(x_2)\sigma_2(-) = \sigma^{'}_2(-)f(x_2)\end{array}$\\ 
\hline $\begin{array}{lr}  p_1 = a_1x_1 + a_0 , \\ p_2 =  \sum_{j=0}^{m}\sum_{i=0}^{n}b_{ij}x_1^ix_2^j\\
\end{array}$
& $\sigma'_1 = a_1\sigma_1a_1^{-1}, 
\delta'_1 = a_1\delta_1 + \delta_{1(a_0, \sigma'_1)}$,  \quad $\delta'_2 = \sum b_{ij}x_1^i\delta_2^i + \delta_{2(b_{i0}x_1^i,\sigma'_2)} $ & $\begin{array}{lr} \ \ \ \ \ \ \sigma_2\delta_2 = - \delta_2\sigma_2\\
b_{ij}x_1^i\sigma_2^j( - ) = \sigma^{'}_2( - )b_{ij}x_1^i\end{array}$\\ \hline
\caption{cv-polynomials and quasi-derivations}
\label{thirdtable}
\end{longtable}}}
\end{landscape}

Let $R$ be a ring and $(\sigma_2, \delta_2)$ a quasi-derivation on $R[x_1;\sigma_1,\delta_1]$. We say that $\delta_2$ is {\it algebraic} if, as expected, there exists a non-zero polynomial $g$ in the two-step iterated Ore extension $S = R[x_1;\sigma_1, \delta_1][x_2;\sigma_2,\delta_2]$ such that $g(x_1, \delta_2)=0$. The evaluation of a polynomial $g(x_1, x_2)=\sum_{i=0}^n q_i(x_1)x_2^i$ at $\delta_2$ is defined to be the operator $g(x_1, \delta_2) =\sum_{i=0}^n q_i(x_1)\delta_2^i$ on $S$.

The following result is the extension of Theorem \ref{Theoremaversion2}.

\begin{theorem} \label{teo2.12_paran=2}
Consider iterated Ore extensions $S'= R[x'_1;\sigma'_1,\delta'_1][x'_2; \sigma'_2, \delta'_2]$ and $S= R[x_1;\sigma_1,\delta_1][x_2; \sigma_2, \delta_2]$. Let $p_1(x_1) \in R[x_1;\sigma_1,\delta_1],  p_2(x_1, x_2) \in S$  given by $p_1(x_1) = \sum_{i = 0}^k a_{i}x_1^{i}$ and $p_2(x_1,x_2) = \sum_{i = 0}^n \sum_{j = 0}^m b_{ij}x_1^{i}x_2^{j}$. 
\begin{itemize}
\item [\rm (1)] If $p_2$ is a cv-polynomial respect to the quasi-derivation $(\sigma'_2, \delta'_2)$ then $\delta'_2 = (p_2 - B_{00})(\delta_2) + \delta_{2(B_{00}, \sigma'_2)}$, where $B_{00} = \sum_{i = 0}^nb_{i0}x_1^{i}x_2^{0}$, and if also $m \geq 1$, then $b_{ij}x_1^i\sigma_2^j(-) = \sigma'_2(-)b_{ij}x_1^i$ and $\sigma_2\delta_2 = -\delta_2\sigma_2$.
    
\item[\rm (2)] If $\delta_i$ is not an algebraic derivation for $1 \leq i \leq 2$, then $p_i$ is a cv-polynomial $1\leq i \leq 2$ respect to the quasi-derivation $(\sigma'_i, \delta'_i)$ if and only if $\delta'_1 = (p_1 - a_0)\delta_1 + \delta_{1(a_0, \sigma'_1)}$ and $\delta'_2 = (p_2 - B_{00})(\delta_2) + \delta_{2(B_{00}, \sigma'_2)}$, where $B_{00} = \sum_{i = 0}^nb_{i0}x_1^{i}x_2^{0}$.
\end{itemize}
\end{theorem}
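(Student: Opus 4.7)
The strategy is a two-variable generalisation of the argument of Theorem \ref{Theoremaversion2} combined with the ideas of Proposition \ref{LamLeroy1992Theorem2.12}. In both parts the main computational device is the standard expansion
\[
x_2^{j}\,r \;=\; \sum_{s=0}^{j} f_{s}^{j}(r)\,x_2^{s},
\]
where $f_{j}^{j} = \sigma_2^{j}$, $f_{0}^{j} = \delta_2^{j}$, and for $0 < s < j$ the map $f_{s}^{j}$ is the sum of all length-$j$ compositions of $\sigma_2$ and $\delta_2$ containing exactly $s$ occurrences of $\sigma_2$.

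For part (1), I start from the cv-polynomial identity $p_2 r = \sigma'_2(r) p_2 + \delta'_2(r)$, required to hold for every $r \in R[x_1;\sigma_1,\delta_1]$. Substituting the expansion above into $p_2 = \sum_{i,j} b_{ij} x_1^{i} x_2^{j}$ and collecting by powers of $x_2$ on both sides yields a polynomial identity in $x_2$, whose comparison of coefficients gives: from the coefficient of $x_2^{m}$, the relation $b_{im} x_1^{i} \sigma_2^{m}(r) = \sigma'_2(r) b_{im} x_1^{i}$ for every $i$; from the intermediate coefficients of $x_2^{s}$ with $0 < s < m$, the vanishing of the mixed contributions $f_{s}^{j}$, whose leading nontrivial instance (the case $j=2$, $s=1$) is precisely $\sigma_2\delta_2 + \delta_2\sigma_2 = 0$, from which the remaining structural relations $b_{ij} x_1^{i} \sigma_2^{j}(-) = \sigma'_2(-) b_{ij} x_1^{i}$ for $1 \le j < m$ are then forced; and finally, from the coefficient of $x_2^{0}$, isolating $\delta'_2(r)$ yields
\[
\delta'_2(r) \;=\; \sum_{j\ge 1,\,i} b_{ij} x_1^{i} \delta_2^{j}(r) \;+\; B_{00}\,r \;-\; \sigma'_2(r) B_{00} \;=\; (p_2 - B_{00})(\delta_2)(r) \;+\; \delta_{2(B_{00},\sigma'_2)}(r).
\]

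For part (2), the forward direction is immediate from part (1) applied to $p_2$ together with Theorem \ref{Theoremaversion2} applied to $p_1$. For the converse, assume the stated formulas for $\delta'_1$ and $\delta'_2$. Substituting them into the prospective cv-polynomial identities reduces the problem to verifying the structural relations $a_i \sigma_1^{i}(-) = \sigma'_1(-) a_i$, $b_{ij} x_1^{i} \sigma_2^{j}(-) = \sigma'_2(-) b_{ij} x_1^{i}$, and $\sigma_2\delta_2 = -\delta_2\sigma_2$. Here the non-algebraicity of $\delta_1$ and $\delta_2$ is invoked exactly as in Proposition \ref{LamLeroy1992Theorem2.12}(2): any residual obstruction takes the form of an operator polynomial in the $\delta_i$ that vanishes identically on $R$ (respectively on $R[x_1;\sigma_1,\delta_1]$), and non-algebraicity forces each coefficient of such an operator polynomial to be zero. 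The principal difficulty is the bookkeeping for the intermediate-degree coefficients in the $x_2$-expansion; the cleanest route is to isolate the coefficient of $x_2^{m-1}$ first to extract $\sigma_2\delta_2 = -\delta_2\sigma_2$, and then to descend through the remaining coefficients to obtain the structural equations term by term, which in combination with the derivation formulas reassemble into the cv-polynomial identities for $p_1$ and $p_2$.
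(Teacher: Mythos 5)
Your part (1) is essentially the paper's argument: extracting the coefficient of $x_2^0$ from $p_2r=\sigma'_2(r)p_2+\delta'_2(r)$ is the same computation the paper performs by applying the evaluation homomorphism $\lambda$ (with $\lambda(x_2)=\delta_2$) and evaluating at $1$, and your treatment of the leading and intermediate coefficients matches the paper's expansion (\ref{equa212.2}); both share the same unaddressed point about separating the ``pure'' and ``mixed'' contributions inside a fixed power of $x_2$. The forward half of part (2) is also as in the paper.

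The converse of part (2) is where your proposal has a genuine gap. You propose to substitute the formulas for $\delta'_1,\delta'_2$ into the prospective cv-identities and then ``verify the structural relations'' $a_i\sigma_1^i(-)=\sigma'_1(-)a_i$, $b_{ij}x_1^i\sigma_2^j(-)=\sigma'_2(-)b_{ij}x_1^i$ and $\sigma_2\delta_2=-\delta_2\sigma_2$, claiming that non-algebraicity forces them. It does not: these relations constrain $\sigma'_i$, the coefficients of $p_i$, and the pair $(\sigma_2,\delta_2)$ of the target ring, and they are independent of whether $\delta_i$ is algebraic; a direct coefficient-by-coefficient verification of $p_2r=\sigma'_2(r)p_2+\delta'_2(r)$ genuinely needs them as hypotheses (this is precisely the content of Theorem \ref{Theoremaversion2} and of the tables), so your plan is circular in the converse direction. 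What non-algebraicity actually buys --- and what the paper uses --- is injectivity of the evaluation homomorphism $\lambda$. The paper's converse never touches the structural relations: it observes that $q(x_1,\delta_2)=\delta'_2-\delta_{2(B_{00},\sigma'_2)}$ is a $\sigma'_2$-derivation (a difference of two $\sigma'_2$-derivations), writes its twisted Leibniz rule as an identity of operators lying in the image of $\lambda$, and lifts that identity through $\lambda^{-1}$ to the polynomial identity $(p_2-B_{00})p_1=\sigma'_2(p_1)(p_2-B_{00})+q(x_1,\delta_2)(p_1)$, from which the cv-identity follows by adding back the inner part coming from $B_{00}$. The step you are missing is the use of the Leibniz rule for $\delta'_2$, i.e.\ the hypothesis that $(\sigma'_2,\delta'_2)$ is a quasi-derivation; without it there is no operator identity to lift, and with it the structural relations are never needed in this direction.
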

\begin{proof}
\begin{itemize}
    \item [\rm (1)]Consider the homomorphism $\lambda: S \to \text{End}(R[x_1;\sigma_1, \delta_1], +)$, given by $\lambda(x_1) = x_1$, $\lambda(x_2) = \delta_2$ and for any $r \in R$, $\lambda(r)$  is defined as the left multiplication by $r$ on $R$. Suppose that  $p_2$ is a cv-polynomial respect to $(\sigma'_2, \delta'_2)$. Then
\begin{align}\label{equ1.2}
     p_2p_1 &=\phi(x'_2x'_1) = \sigma'_2(\phi(x'_1))p_2 + \delta'_2(\phi(x'_1))\\
     p_2r &= \phi(x'_2r) = \sigma'_2(r)p_2 + \delta'_2(r). \notag
\end{align}

If we evaluate the expression (\ref{equ1.2}) respect to the homomorphism $\lambda$, we get
\begin{equation}\label{equation1n=2}
    p_2(x_1,\delta_2)p_1(x_1)  = \sigma'_2(p_1(x_1))p_2(x_1, \delta_2) + \delta'_2(p_1(x_1)).
\end{equation}
Now, by evaluating at element 1 for $\delta_2$ in (\ref{equation1n=2}),
\[
p_2(x_1,\delta_2)p_1(x_1)  = \sigma'_2(p_1(x_1)) \sum_{i = 0}^n b_{i0}x_1^{i}x_2^{0}+ \delta'_2(p_1(x_1)), 
\]

since $\delta^j_2(1) = 0$ for $j \geq 1$. Therefore, by taking $ B_{00}= \sum_{i = 0}^nb_{i0}x_1^{i}x_2^{0}$, it follows that
\begin{align*}
   \delta'_2(p_1(x_1)) &=  p_2(x_1,\delta_2)p_1(x_1) - \sigma'_2(p_1(x_1)) B_{00}\\
   &= p_2(x_1,\delta_2)p_1(x_1) -B_{00}p_1(x_1) + B_{00}p_1(x_1)- \sigma'_2(p_1(x_1))B_{00}\\
   &= (p_2(x_1,\delta_2) - B_{00})p_1(x_1) + \delta_{2(B_{00}, \sigma'_2)}(p_1(x_1))\\
   &= (p_2(x_1,x_2) - B_{00})(\delta_2)(p_1(x_1)) + \delta_{2(B_{00}, \sigma'_2)}(p_1(x_1)).
\end{align*}

and thus $\delta'_2 = (p_2 - B_{00})(\delta_2) + \delta_{2(B_{00}, \sigma'_2)}$.

For the next part, let $m \geq 1$. Since $p_2$ is a cv-polynomial,  $p_2p_1 - \sigma'_2(p_1)p_2 = \delta'_2(p_1)$, and so
    \begin{align}\label{equa212.2}
       &\left(\sum_{i = 0}^n\sum_{j = 0}^m b_{ij}x_1^{i}x_2^{j}\right) \left(\sum_{i = 0}^k a_{i}x_1^{i}\right) - \sigma'_2\left(\sum_{i = 0}^k a_{i}x_1^{i}\right)\sum_{i = 0}^n\sum_{j = 0}^m b_{ij}x_1^{i}x_2^{j}\\
       &= [b_{nm}x_1^n\sigma_2^m(a_kx_1^k) - \sigma'_2(a_kx_1^k)b_{nm}x_1^n]x_2^m +\cdots -  \sigma'_2(p_1)B_{00}\notag\\
       &\ \ + p_{\sigma_2,\delta_2}+  \sum_{i = 0}^n\sum_{j = 0}^m b_{ij}x_1^{i}\delta_2^j(p_1) + B_{00}p_1 = \delta'_2(p_1), \notag
    \end{align}
    where $p_{\sigma_2,\delta_2}$ are the possible combinations between $\sigma_2$ and $\delta_2$. Since (\ref{equa212.2}) is equal to 
    \begin{align*}
        \delta'_2(p_1) &= (p_2 - B_{00})(\delta_2)(p_1) + \delta_{2(B_{00}, \sigma'_2)}(p_1)\\
        &=  \sum_{i = 0}^n\sum_{j = 0}^m b_{ij}x_1^{i}\delta_2^j(p_1) + B_{00}p_1 - \sigma'_1(p_1)B_{00},
    \end{align*}

necessarily the equality
\[
[b_{nm}x_1^n\sigma_2^m(a_kx_1^k) - \sigma'_2(a_kx_1^k)b_{nm}x_1^n]x_2^m +\cdots + p_{\sigma_2,\delta_2} = 0.
\]

must be satisfied. Therefore, $b_{ij}x_1^i\sigma_2^j(a_ix_1^i) = \sigma'_2(a_ix_1^i)b_{ij}x_1^i$ for $1\leq i \leq n$, $1\leq j \leq m$, $p_{\sigma_2, \delta_2} = 0$, and so $\sigma_2\delta_2 = - \delta_2\sigma_2$, as desired.

\item[(2)] The first implication has been proven in Theorem \ref{Theoremaversion2} and the previous part (1). Let us suppose that $\delta_2$ is not an algebraic derivation, and consider $q(x_1, x_2) = p_2(x_1, x_2) - B_{00}$. Then $\delta'_2 = q(x_1, \delta_2)+ \delta_{2(B_{00}, \sigma'_2)}$, and so $q(x_1,\delta_2) = \delta'_2 - \delta_{2(B_{00}, \sigma'_2)}$ is a $\sigma'_2$-derivation, which implies that for any $r \in R$,
\[
q(x_1, \delta_2)(p_1(x_1)r) = \sigma'_2(p_1(x_1))q(x_1, \delta_2)(r) + q(x_1,\delta_2)(p_1(x_1))r.
\]

In this way, 
\[
q(x_1, \delta_2)\lambda(p_1(x_1)) = \lambda(\sigma'_2(p_1(x_1)))q(x_1,\delta_2) + \lambda(q(x_1, \delta_2)(p_1(x_1))),
\]
which holds in the image of $\lambda$. Since $\delta_2$ is not algebraic, $\lambda$ is injective, and so there exist $\lambda^{-1}$, and
\[
q(x_1, x_2)(p_1(x_1)) = \sigma'_2(p_1(x_1))q(x_1,x_2) + q(x_1, \delta_2)(p_1(x_1)).
\]
By replacing $q(x_1, x_2)$ and $q(x_1, \delta_2)$, we get 
\begin{align*}
    (p_2(x_1, x_2) - B_{00})p_1(x_1) &= \sigma'_2(p_1(x_1))(p_2(x_1, x_2) - B_{00})\\
    &\ \ + (\delta'_2 - \delta_{2(B_{00}, \sigma'_2)})(p_1(x_1))\\
    &= \sigma'_2(p_1(x_1))p_2(x_1, x_2) - \sigma'_2(p_1(x_1))B_{00} \\
    &\ \ + \delta'_2(p_1(x_1))- \delta_{2(B_{00}, \sigma'_2)} (p_1(x_1))\\
    &= \sigma'_2(p_1(x_1))p_2(x_1, x_2) - \sigma'_2(p_1(x_1))B_{00}\\
    &\ \ + \delta'_2(p_1(x_1))- B_{00}p_1(x_1) + \sigma'_2(p_1(x_1))B_{00}\\
    &= \sigma'_2(p_1(x_1))p_2(x_1, x_2)  + \delta'_2(p_1(x_1)) - B_{00}p_1(x_1),
\end{align*}

and cancelling $B_{00}p_1(x_1)$, it follows that $p_2p_1 = \sigma'_2(p_1)p_2 + \delta'_2(p_1)$. 

Following the same ideas of part (2) of the proof above, but in this case with $q(x_1) = p_1(x_1) - a_0$, $\delta'_1 = p_1(\delta_1)+ \delta_{1(a_0,\sigma'_1)}$, and evaluating with the homomorphism  $\alpha: R[x_1;\sigma_1,\delta_1] \to \text{End}(R, +)$, $\alpha(x_1) = \delta_1$ and $\lambda(r)$ defined as the left multiplication by $r$ on $R$ for each $r\in R$, we can assert that $p_1r = \sigma'_1(p_1)r + \delta'_1(r)$. These facts show that $p_i$ ($i= 1,2$) is a cv-polynomial respect to the quasi-derivation $(\sigma'_i, \delta'_i)$ ($i= 1,2$).
\end{itemize}
\end{proof}

\begin{example}
Let $\mathcal{J'} = \Bbbk[y'][x'; \sigma'_2, \delta'_2]$ and $\mathcal{J} = \Bbbk[y][x; \sigma_2, \delta_2]$ (see Example \ref{examplesOreiterated} (i)). Consider the polynomials $p_1 = a_1y +a_0$ and $p_2 = b_1yx +b_0$. By Theorem \ref{teo2.12_paran=2}, we know that $\delta'_2=(p_2-B_{00})(\delta_2) + \delta_{2(B_{00},\sigma')}$ and $\sigma'_2 = a_1\sigma_2 a_1^{-1}$, with $\sigma_2(y)= y$ (recall that $\delta_2(y) = -y^2$, Example \ref{examplesOreiterated}). Since
\begin{align*}
    p_2p_1& = (b_1yx +b_0)(a_1y + a_0) = a_1b_1yxy+ a_0b_1yx + a_1b_0y + a_0b_0 \\
    &= a_1b_1y(\sigma_2(y)x + \delta_2(y)) + a_0b_1yx + a_1b_0y + a_0b_0\\
    &= a_1b_1y^2x - a_1b_1y^3+ a_0b_1yx + a_1b_0y + a_0b_0, 
\end{align*}

and
\begin{align*}
    \sigma'_2(p_1)p_2 + \delta'_2(p_1) &= \sigma'_2(a_1y+ a_0)(b_1yx + b_0) + \delta'_2(a_1y + a_0)\\
    &= a_1b_1y^2x + a_1b_0y + a_0b_1yx + a_0b_0 + \delta'_2(a_1y)+ \delta'_2(a_0)\\
    &= a_1b_1y^2x + a_1b_0y + a_0b_1yx + a_0b_0 + \sigma'_2(a_1)\delta'_2(y) + \delta'_2(a_1)y\\
    &= a_1b_1y^2x + a_1b_0y + a_0b_1yx + a_0b_0 +a_1[b_1y\delta_2(y)+b_0\sigma'_2(y) - yb_0]\\
    &= a_1b_1y^2x + a_1b_0y + a_0b_1yx + a_0b_0 - a_1b_1y^3,
\end{align*}

it follows that the polynomials $p_i$'s, $i = 1, 2$, are cv-polynomials respect to the quasi-derivation $(\sigma'_i, \delta'_i)$, $i =1, 2$. Note that the conditions for $\sigma'_1$ and $\delta'_1$ are satisfied.
\end{example}

Lam and Leroy \cite[p. 86]{LamLeroy1992} asserted that a $D$-homomorphism $\phi: D[x';\sigma',\delta'] \to D[x; \sigma, \delta]$ is {\it injective} if and only if the associated cv-polynomial $p(t) = \phi(t')$ has degree greater or equal $1$, and $\phi$ is {\it surjective} (respectively, {\it bijective}) if and only if $p(t)$ has degree one. Theorem \ref{isoOreextension} extends this result to two-step iterated Ore extensions.

\begin{theorem}\label{isoOreextension}
If $\phi$ is a homomorphism of two-step iterated Ore extensions as in Definition \ref{definitionhomooreiteretared}, then $\phi$ is an isomorphism if and only if the associated cv-polynomials $p_1$ and $p_2$ have degree one respect to the indeterminates $x_1$ and $x_2$, respectively.
\end{theorem}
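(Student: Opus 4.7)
The plan is to analyze $\phi$ via the natural $x_2$-degree filtration on $S = R[x_1;\sigma_1,\delta_1][x_2;\sigma_2,\delta_2]$, reducing the argument to leading-coefficient computations in the spirit of \cite{LamLeroy1992}. Throughout, I would assume (implicitly, as suggested by Table \ref{thirdtable}) that the relevant leading coefficients of $p_1$ and $p_2$ are units in the appropriate rings, since otherwise even the one-variable version of the statement fails.

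For the implication ``degree one $\Rightarrow$ isomorphism'', write $p_1 = a_1 x_1 + a_0$ with $a_1 \in R^{*}$, and $p_2 = h(x_1) x_2 + k(x_1)$ with $h(x_1)$ a unit in $R[x_1;\sigma_1,\delta_1]$. For surjectivity, I would first observe that $x_1 = a_1^{-1}(\phi(x'_1) - a_0) \in \operatorname{im}(\phi)$, so $R[x_1;\sigma_1,\delta_1] \subseteq \operatorname{im}(\phi)$, and then $x_2 = h(x_1)^{-1}(\phi(x'_2) - k(x_1)) \in \operatorname{im}(\phi)$. For injectivity, given $f = \sum r_{ij}(x'_1)^i(x'_2)^j \in S'$ with $\phi(f) = 0$, I would select the lexicographically maximal index $(I,J)$ for which $r_{IJ} \neq 0$; the monomial $x_1^I x_2^J$ appears in $\phi(f)$ with coefficient equal to $r_{IJ}$ times a product of powers of $a_1$ and $\sigma$-iterates of $h(x_1)$. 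This coefficient is nonzero, forcing $r_{IJ} = 0$, and induction on $(I,J)$ finishes the argument.

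For the implication ``isomorphism $\Rightarrow$ degree one'', set $m := \deg_{x_2} p_2$. Since $\deg_{x_2} p_1 = 0$, any word built from $R \cup \{p_1, p_2\}$ involving exactly $j$ factors of $p_2$ has $x_2$-degree exactly $jm$, provided leading coefficients multiply without cancellation. Hence every nonzero element of $\operatorname{im}(\phi)$ has $x_2$-degree in $\{0, m, 2m, \dots\}$; applying this to $x_2 \in \operatorname{im}(\phi)$ forces $m = 1$. Next, the $x_2$-degree-zero component of $\operatorname{im}(\phi)$ is contained in the $R$-subring $R[p_1] \subseteq R[x_1;\sigma_1,\delta_1]$ generated by $p_1$, and this subring must contain $x_1$ by the previous step; a standard leading-coefficient argument in $x_1$, directly adapting \cite[Theorem 5.6]{LamLeroy1992}, then forces $\deg_{x_1} p_1 = 1$.

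The main obstacle is the no-cancellation claim that underpins both directions: products of the form $p_2^{\,j}$ (and more generally $q_1 p_2 q_2 p_2 \cdots$ with $q_i \in R[x_1;\sigma_1,\delta_1]$) have $x_2$-leading coefficient given by an interleaved product of $\sigma_2$-iterates of $h(x_1)$ with the $q_i$, and I would need to show this does not vanish. Under the mild regularity hypotheses in force in this paper (the coefficient rings are domains, and the endomorphisms $\sigma_i$ are injective, as holds in all the motivating examples such as $\mathcal{J}$, $\mathcal{J}_q$, and $O_p(M_2(\Bbbk))$), this is routine, but it is the step where the two-step setting is genuinely more delicate than the one-variable case treated by Lam and Leroy.
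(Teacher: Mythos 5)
Your proposal is essentially correct and reaches the same conclusion, but it organizes the argument differently from the paper. For the direction ``degree one $\Rightarrow$ isomorphism,'' the paper constructs an explicit two-sided inverse $\psi$ by solving $\psi(x_1)=a^{-1}x_1'-a^{-1}\psi(b)$ and $\psi(x_2)=\gamma^{-1}x_2'-\gamma^{-1}g(\cdots)$, after first re-verifying that degree-one polynomials satisfy the cv-conditions and deducing along the way that the $x_2$-leading coefficient of $p_2$ must be a unit; you instead prove surjectivity by the same ``solve for the generators'' computation and then prove injectivity separately by a lexicographic leading-coefficient argument. The latter is a genuine addition: the paper never verifies that its $\psi$ is well defined or that $\psi\circ\phi=\mathrm{id}$ on all of $S'$ rather than just on generators, so your filtration argument is, if anything, more complete. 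For the converse, the paper rules out constant $p_1$ or $p_2$ by three ad hoc injectivity-violating pairs $f\neq g$ with $\phi(f)=\phi(g)$, and then handles degrees $\geq 2$ by writing $x_1=\sum c_{ij}p_1^ip_2^j$ and counting degrees; your observation that the $x_2$-degrees occurring in $\mathrm{im}(\phi)$ lie in $m\mathbb{N}$ (so $x_2\in\mathrm{im}(\phi)$ forces $m=1$) and that the $x_2$-degree-zero part of the image is the left $R$-span of powers of $p_1$ (so $x_1\in\mathrm{im}(\phi)$ forces $\deg_{x_1}p_1=1$) subsumes all of these cases uniformly, using only surjectivity. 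The one caveat is the same for both proofs: the no-cancellation claims and the invertibility of the leading coefficients are not consequences of the theorem's stated hypotheses over an arbitrary ring $R$ (over $R=\mathbb{Z}$, $p_1=2x_1$ is a degree-one cv-polynomial whose $\phi$ is not surjective). You flag this explicitly and restrict to domains with injective endomorphisms and unit leading coefficients; the paper silently writes $a\in R^*$ and extracts $f(x_1)\in R^*$ from its choice of $\sigma_2'$. So the gap lies in the theorem statement rather than in your argument, and your version makes the needed hypotheses visible.
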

\begin{proof}
Let $\phi$ be an isomorphism of two-step iterated Ore extensions. By Definition \ref{definitionhomooreiteretared}, $\phi(x'_1) = p_1 \in R[x_1;\sigma_1,\delta_1]$ and $\phi(x'_2):= p_2 \in S$. 

With the aim of showing that the cv-polynomials $p_1$ and $p_2$ necessarily have a degree greater or equal to one, consider the following cases:
\begin{itemize}
    \item [\rm (1)] $p_1 = c$ and $p_2 = c^{-1}dc$, with $c, d \in R$. It is clear that we can find elements $ f, g \in S'$ such that $f = dx'_1$ and $g = cx'_2$. Then $\phi(f) = dc = \phi(g)$, but since $f \neq g$, it contradicts the injectivity of $\phi$.
    \item [\rm (2)] $p_1 = c \in R$, and $p_2 \in S$ with degree greater than zero. Let $f, g \in S'$ such that $f = dx'_1$ and $g = dc$. It follows that $\phi(f) = dc = \phi(g)$, which contradicts again the injectivity of $\phi$.
    \item [\rm (3)] $p_1 \in R[x_1; \sigma_1, \delta_1]$ of degree greater than zero and $p_2 = d\in R$. Consider elements $f, g \in S'$ such that $f = d$ and $g = x'_2$. Again, $\phi(f) = d = \phi(g)$, which contradicts again the injectivity of $\phi$.
\end{itemize}

Assume that the cv-polynomials $p_1$ and $p_2$ have degree $n, k >1$, respectively. We write
\[
p_1 = \sum_{i=0}^n a_ix^{i}_1\ \ {\rm and}\ \ p_2 = \sum_{j=0}^k q_j(x_1)x^{j}_2,
\]
where $a_i \in R$, $q_j(x_1) \in R[x_1;\sigma_1,\delta_1]$ for $ 0 \leq i \leq n$ and $0 \leq j \leq k$, and $a_n, q_k(x_1)$ are non-zero elements. Since $\phi$ is surjective, for the indeterminate $x_1 \in S$ there exists a polynomial $g \in S'$ such that 
\[
x_1 = \phi(g) = \phi \left(\sum_{i,j= 0}^w c_{ij}x^{'i}_1x^{'j}_2\right) = \sum_{i,j= 0}^w c_{ij}\phi(x^{'i}_1x^{'j}_2) = \sum_{i,j= 0}^w c_{ij}p_1^{i}p_2^{j}.
\]

In this way, $w = 1$, the degree of $p_1$ is one, and the degree of $p_2$ is zero. However, since the cv-polynomials have degree greater than one, it follows that there is no polynomial $g \in S'$ with $\phi(g) = x_1$, which gives us a contradiction. Similarly, if we take the indeterminate $x_2 \in S$, we get that $w = 1$, $p_1$ must have degree zero or one, and $p_2$ must have degree one, which is again a contradiction. Therefore, both $p_1$ and $p_2$ must have degree exactly one.

Now, let $\phi:S' \to S$ be the homomorphism defined as $\phi(x'_1)= p_1 = ax_1 +b$ and $\phi(x'_2)= p_2 = f(x_1)x_2 + g(x_1)$, with $a \in R^*$, $b \in R$ and $f(x_1), g(x_1) \in R[x_1; \sigma_1, \delta_1]$. First of all, we show that both polynomials are cv-polynomials. With this aim, consider the following equalities:
\begin{align}\label{p2p1iso}
    \phi(x'_2x'_1) &= p_2p_1 = (f(x_1)x_2 + g(x_1))(ax_1 +b)\\
    &= f(x_1)x_2ax_1 + f(x_1)x_2b + g(x_1)ax_1 + g(x_1)b\notag \\
    &= f(x_1)[\sigma_2(ax_1)x_2 + \delta_2(ax_1)]+ f(x_1)x_2b + g(x_1)ax_1 + g(x_1)b \notag \\
    &= f(x_1)\sigma_2(ax_1)x_2 + f(x_1)\delta_2(ax_1)+ f(x_1)x_2b + g(x_1)ax_1 + g(x_1)b, \notag
\end{align}

and
\begin{align}\label{sigmap2piiso}
    \sigma'_2(\phi(x'_1))p_2 + \delta'_2(\phi(x'_1)) &= \sigma'_2(ax_1 +b)(f(x_1)x_2 + g(x_1)) + \delta'_2(ax_1 +b)\\
    &= \sigma'_2(ax_1)f(x_1)x_2 + \sigma'_2(ax_1)g(x_1) +\sigma'_2(b)f(x_1)x_2 \notag \\
     &\ \ + \sigma'_2(b)g(x_1) + \delta'_2(ax_1) + \delta'_2(b). \notag
\end{align}

Having in mind that (\ref{p2p1iso}) and (\ref{sigmap2piiso}) must coincide, let $\sigma'_2 := (\sigma_{2{f(x_1)}} \circ \sigma_2)$, that is,  $(\sigma_{2{f(x_1)}}\circ \sigma_2)( - ) = f(x_1)\sigma_2(-)(f(x_1))^{-1}$, so $f(x_1)$ is a unit, say $f(x_1) = \gamma \in R^{*}$. Since $\delta'_2 = (\gamma\delta_2 + \delta_{2{(g(x_1),\sigma'_2)}})$, it follows that $p_1 = ax + b$ and $p_2 = \gamma x_2 + g(x_1)$, whence
\begin{align} \label{p2p1iso2}
p_2p_1 &=  \gamma\sigma_2(ax_1)x_2 + \gamma\delta_2(ax_1)+ \gamma x_2b + g(x_1)ax_1 + g(x_1)b \notag \\
&= \gamma\sigma_2(ax_1)x_2 + \gamma\delta_2(ax_1)+ \gamma\sigma_2(b)x_2 + \gamma\delta_2(b)+ g(x_1)ax_1 + g(x_1)b, 
\end{align}
and
\begin{align}\label{sigmap2p1iso2}
    \sigma'_2(\phi(x'_1))p_2 + \delta'_2(\phi(x'_1)) &=   \sigma'_2(ax_1)\gamma x_2 + \sigma'_2(ax_1)g(x_1) +\sigma'_2(b)\gamma x_2 \notag\\
    &\ \ +  \sigma'_2(b)g(x_1) + \delta'_2(ax_1) + \delta'_2(b) \notag \\
    &=  \gamma \sigma_2(ax_1)\gamma^{-1} \gamma x_2 + \gamma \sigma_2(ax_1)\gamma^{-1}g(x_1) +\gamma \sigma_2(b)\gamma^{-1}\gamma x_2 \notag\\
    &\ \ +  \gamma \sigma_2(b)\gamma^{-1}g(x_1) + \gamma \delta_2(ax_1) + g(x_1)ax_1\notag \\
    &\ \  -  \gamma \sigma_2(ax_1)\gamma^{-1}g(x_1) + \gamma\delta_2(b) + g(x_1)b - \gamma\sigma_2(b)\gamma^{-1}g(x_1) \notag\\
    &=  \gamma \sigma_2(ax_1)x_2 +\gamma \sigma_2(b) x_2 +  \gamma \delta_2(ax_1) + g(x_1)ax_1 \\
    &\ \ +  \gamma\delta_2(b) + g(x_1)b. \notag
    \end{align}

If we compare (\ref{p2p1iso2}) and (\ref{sigmap2p1iso2}), then
$$
p_2p_1 = \sigma'_2(\phi(x'_1))p_2 + \delta'_2(\phi(x'_1)) =\sigma'_2(p_1)p_2 + \delta'_2(p_1),
$$
as desired. 

Now, since 
\begin{align*}
p_1r = &\ (ax + b)r = axr + br = a\sigma_1(r)x + a\delta_1(r) + br,\quad {\rm and} \\
\sigma'_1(r)p_1 + \delta'_1(r) = &\ \sigma'_1(r)ax + \sigma'_1(r)b + \delta'_1(r),
\end{align*}

then $\sigma'_1 = a \sigma_1 a^{-1}$ and $\delta'_1 = a\delta_1 + \delta_{1(b, \sigma'_1)}$. In this way,  
\begin{equation}\label{eq1}
    \sigma'_1(r)p_1 + \delta'_1(r) = a\sigma_1(r)a^{-1}ax + a\sigma_1(r)a^{-1}b + a\delta_1(r) + br - a \sigma_1(r) a^{-1}b,
\end{equation}

or equivalently, 
\[
p_1r =  a\sigma_1(r)x + a\delta_1(r) + br = \sigma'_1(r)p_1 + \delta'_1(r).
\]

All these facts guarantee that the polynomials $p_1 = ax_1 + b$ and $p_2 = \gamma x_2 +g(x_1)$ are cv-polynomials respect to the quasi-derivations $(\sigma'_1,\delta'_1) = (a\sigma_1a^{-1},a\delta_1 + \delta_{1(b,\sigma'_1)})$ and $(\sigma'_2, \delta'_2) = (\gamma \sigma_2 \gamma^{-1}, \gamma\delta_2 + \delta_{2{(g(x_1),\sigma'_2)}})$, respectively.

Finally, let us verify that $\phi$ is an isomorphism. We will show that there exists a homomorphism $\psi: S \to S'$ such that  $\psi \circ \phi = {\rm id}_{S'}$ and $\phi \circ \psi = {\rm id}_{S}$. Consider $x'_1 = \psi(\phi(x'_1))$, and the equalities
\[
x'_1 = \psi(\phi(x'_1)) = \psi(p_1) = \psi(ax_1 + b) = \psi(ax_1) + \psi(b).
\]

From the last expression, it follows that $\psi(x_1) = a^{-1}x'_1 - a^{-1}\psi(b)$.

In the same way, let us consider $x'_2 = \psi(\phi(x'_2))$. Since
\begin{align*}
x'_2 = &\ \psi(\phi(x'_2)) = \psi(p_2) = \psi(\gamma x_2 + g(x_1)) = \psi(\gamma x_2) + \psi(g(x_1))\\
= &\ \gamma \psi(x_2) + g(\psi(x_1)) = \gamma\psi(x_2) + g(a^{-1}x'_1-a^{-1}\psi(b)),
\end{align*}

we get $\psi(x_2) = \gamma^{-1}x'_2 - \gamma^{-1}g(a^{-1}x'_1-a^{-1}\psi(b))$. These facts show that $\psi \circ \phi = {\rm id}_{S'}$, where $p_1 = ax_1+ b$ and $p_2 = \gamma x_2 + g(x_1)$. In the same way, it can be shown that $\phi \circ \psi = {\rm id}_{S}$, whence the homomorphism defined by  $\phi(x'_1)= p_1 = ax_1 + b$ and $\phi(x'_2) = p_2= \gamma x_2 + g(x_1)$ is an isomorphism.
\end{proof}

Next, we characterize the automorphisms corresponding with cv-polynomials for the Jordan plane $\mathcal{J}$ (Theorem \ref{autoJordan}) and the $q$-skew Jordan plane $\mathcal{J}_p$ (Theorem \ref{autoqJordan}) .

\begin{theorem}\label{autoJordan}
If $\phi$ is an automorphism of the two-step iterated Ore extension $\mathcal{J}$, then the correspoding cv-polynomials are given by $\phi(y)= p_1= \gamma y$ and $\phi(x)= p_2=x+g(y)$ respect to $(\sigma_i, \delta_i)$, $i=1,2$ {\rm (}as in Example \ref{examplesOreiterated}{\rm (i))}, for some $\gamma \in \Bbbk^{*}$ and $g(y)\in \Bbbk[y]$. 
\end{theorem}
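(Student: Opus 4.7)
The plan is to combine the general structural result Theorem \ref{isoOreextension} with the specific commutation relation of $\mathcal{J}$. First, I would recall that $\mathcal{J} \cong \Bbbk[y][x;\sigma_2,\delta_2]$ fits the framework of Definition \ref{definitionhomooreiteretared} with $R = \Bbbk$, $\sigma_1 = \mathrm{id}$, $\delta_1 = 0$, $\sigma_2 = \mathrm{id}$, and $\delta_2(y) = -y^2$. Since an automorphism of $\mathcal{J}$ is in particular an isomorphism of two-step iterated Ore extensions in this sense, Theorem \ref{isoOreextension} forces the two cv-polynomials to have degree one in their respective indeterminates, so one may write
\[
p_1 = \phi(y) = ay + b,\qquad p_2 = \phi(x) = cx + g(y),
\]
with $a, c \in \Bbbk^{*}$, $b \in \Bbbk$, and $g(y) \in \Bbbk[y]$. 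Because $\Bbbk$ and $\Bbbk[y]$ are commutative and $\sigma_1 = \sigma_2 = \mathrm{id}$, all the inner-derivation corrections in the general formulas $(\sigma'_i,\delta'_i)$ of Theorem \ref{isoOreextension} collapse automatically, so the quasi-derivations of the source already match those of the target $(\sigma_i,\delta_i)$.

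Next, I would impose that $\phi$ preserves the defining relation $yx - xy = y^2$ of $\mathcal{J}$, which is exactly the content of condition (\ref{conditiontwoOre1}) applied to $y$. Since $g(y) \in \Bbbk[y]$ commutes with $p_1 = ay+b$, a short commutator expansion yields
\[
[\phi(y),\phi(x)] = [ay+b,\,cx + g(y)] = ac\,[y,x] = ac\,y^{2},
\]
while $\phi(y)^{2} = (ay+b)^{2} = a^{2}y^{2} + 2ab\,y + b^{2}$. Matching coefficients of $y^{2}$, $y$ and the constant in the identity $[\phi(y),\phi(x)] = \phi(y)^{2}$ gives $ac = a^{2}$, $2ab = 0$ and $b^{2}=0$, whence $c = a$ (since $a \in \Bbbk^{*}$) and $b = 0$. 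Setting $\gamma := a = c$ one recovers precisely $p_1 = \gamma y$ and $p_2 = \gamma x + g(y)$, the asserted form.

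To close the loop, I would verify that every map of this form is indeed an automorphism by exhibiting an explicit inverse, namely $\phi^{-1}(y) = \gamma^{-1}y$ and $\phi^{-1}(x) = \gamma^{-1}\bigl(x - g(\gamma^{-1}y)\bigr)$, which is again of the same shape and so preserves $yx-xy=y^{2}$ by the same computation. The most delicate point is the coefficient bookkeeping in the second step, but it becomes clean because the commutativity of $\Bbbk[y]$ eliminates every inner-derivation term appearing in the general framework of Definition \ref{definitionhomooreiteretared}, collapsing the problem to the single commutator identity above.
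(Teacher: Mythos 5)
There is a genuine gap, and it is precisely the point the paper is at pains to make in Example \ref{RemarkShirikov2005Proposition3.1}. In your second step you replace the cv-polynomial condition (\ref{conditiontwoOre1}) by the requirement that $\phi$ preserve the defining relation $yx-xy=y^2$, i.e. $[\phi(y),\phi(x)]=\phi(y)^2$. These are \emph{not} the same condition. With $p_1=\gamma y+\beta$ and $p_2=f(y)x+g(y)$, condition (\ref{conditiontwoOre1}) reads $p_2p_1=\sigma_2(p_1)p_2+\delta_2(p_1)$, which (since $\sigma_2=\mathrm{id}$ and $\delta_2(p_1)=-\gamma y^2$) amounts to $[p_1,p_2]=\gamma y^2$ — linear in $\gamma$ — and forces $f(y)=1$. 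Your condition $[p_1,p_2]=\phi(y)^2$ amounts to $[p_1,p_2]=\gamma^2y^2$ and forces $f(y)=\gamma$. Consequently you arrive at $p_2=\gamma x+g(y)$, which is Shirikov's characterization of the ring automorphisms of $\mathcal{J}$, not the theorem's conclusion $p_2=x+g(y)$; your closing sentence calls $p_2=\gamma x+g(y)$ ``the asserted form,'' but it is not. The paper's Example \ref{RemarkShirikov2005Proposition3.1} exhibits $\phi(x)=2x+y^2$, $\phi(y)=2y$ as a ring automorphism of $\mathcal{J}$ that fails (\ref{conditiontwoOre1}), which shows concretely that the condition you imposed is strictly weaker than the one the theorem is about.

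The remainder of your outline (invoking Theorem \ref{isoOreextension} to reduce to degree one, and producing the explicit inverse at the end) matches the paper's strategy, and the fix is local: redo the matching step against
\[
p_2p_1=\sigma_2(p_1)p_2+\delta_2(p_1)=p_1p_2-\gamma y^2,
\]
as the paper does, which yields $\gamma f(y)y^2=\gamma y^2$ and hence $f(y)=1$; the constant term $\beta=0$ then follows from the invertibility argument rather than from your coefficient comparison. Two smaller points: writing $p_2=cx+g(y)$ with $c\in\Bbbk^*$ from the outset quietly assumes the leading coefficient is a scalar, whereas degree one in $x$ only gives $f(y)\in\Bbbk[y]$ a priori (the paper derives $f(y)\in\Bbbk^*$ from the matching); and your appeal to $2ab=0$ is unnecessary once $b^2=0$ gives $b=0$ over a field, but in any case that whole comparison is computing the wrong identity.
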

\begin{proof}
Consider $\phi$ an automorphism of $\mathcal{J}$. Then $\phi$ is an isomorphism of two-step iterated Ore extensions, so Theorem \ref{isoOreextension} establishes that its associated cv-polynomials have degree $1$. Let $\phi(y):=p_1 = \gamma y + \beta$ with $\gamma, \beta \in \Bbbk^*$ and $\phi(x):=p_2 = f(y) x + g(y)$ where $f(y), g(y) \in \Bbbk[y]$. The idea is to see that these polynomials satisfy conditions (\ref{conditiontwoOre1}) - (\ref{conditiontwoOre3}) respect to the quasi-derivations $(\sigma_i, \delta_i)$, $i = 1,2$, respectively.

Note that the equality $p_2p_1 = \sigma_2(p_1)p_2 + \delta_2(p_1)$ holds if the expressions
\begin{align*}
    p_2p_1 &= (f(y) x + g(y))(\gamma y + \beta) = f(y)x \gamma y + f(y) x \beta + g(y) \gamma y + g(y)\beta\\
    &= \gamma f(y)xy + \beta f(y)x + \gamma g(y)y + \beta g(y)\\
    &= \gamma f(y)(yx - y^2) + \beta f(y)x + \gamma g(y)y + \beta g(y)\\
    &= \gamma f(y)yx - \gamma f(y)y^2 + \beta f(y)x + \gamma g(y)y + \beta g(y), 
\end{align*}

and
\begin{align*}
  \sigma_2(p_1)p_2 + \delta_2(p_1) &= \sigma_2(\gamma y + \beta)(f(y) x + g(y)) + \delta_2(\gamma y + \beta)\\
  &= (\sigma_2(\gamma y) + \sigma_2(\beta))(f(y) x + g(y)) + \delta_2(\gamma y) + \delta_2(\beta)\\
  &= \gamma yf(y)x + \gamma y g(y) + \beta f(y)x + \beta g(y) \\ 
  &\ + \sigma_2(\gamma)\delta_2(y) + \delta_2(\gamma)y + \delta_2(\beta)\\
  &= \gamma f(y) yx + \gamma g(y)y + \beta f(y)x + \beta g(y) - \gamma y^2 
\end{align*}

coincide, i.e., $\gamma f(y) = \gamma$, whence $f(y) \in \Bbbk^*$ and necessarily $f(y) = 1$. Thus, $p_2 = x + g(y)$ and $p_1 = \gamma y + \beta$. It is straightforward to see that $ p_1r = \sigma_1(r)p_1 + \delta_1(r)$ and $p_2r = \sigma_1(r)p_2 + \delta_2(r)$ for any $r \in \Bbbk$. 

Now, since $\phi$ is an automorphism, there exists $\phi^{-1}$ such that $y = \phi^{-1}(\phi(y))$, that is, 
\[
y = \phi^{-1}(\phi(y)) = \phi^{-1}(p_1) = \phi^{-1}(\gamma y + \beta) = \phi^{-1}(\gamma y) + \phi^{-1}(\beta).
\]

From the last expression, it follows that $\phi^{-1}(\beta) = 0$, so $\beta = 0$. In addition, $\phi^{-1}(y) = \gamma^{-1} y$.\\

In the same way, let us consider $x = \phi^{-1}(\phi(x))$. Then
\begin{align*}
  x & = \phi^{-1}(\phi(x)) = \phi^{-1}(p_2) = \phi^{-1}(x + g(y)) = \phi^{-1}(x) + \phi^{-1}(g(y)) \\
  & = \phi^{-1}(x) + g(\phi^{-1}(y)) = \phi^{-1}(x) + g(\gamma^{-1}y), 
\end{align*}

so $\phi^{-1}(x) = x - g(\gamma^{-1}y)$. We conclude that the homomorphism defined by  $\phi(y)= p_1 = \gamma y$ and $\phi(x) = p_2= x + g(y)$, with $g(y) \in \Bbbk[y]$, is an automorphism of Jordan plane $\mathcal{J}$.
\end{proof}

\begin{example}\label{RemarkShirikov2005Proposition3.1}
Shirikov \cite[Proposition 3.1]{Shirikov2005} characterized the automorphisms of $\mathcal{J}$ (Example \ref{examplesOreiterated}(i)) defined over a field $\Bbbk$ of characteristic zero (he does not consider $\mathcal{J}$ as an iterated Ore extension). He proved that these automorphisms are given by $\phi(x) = \gamma x + g(y)$ and $\phi(y) = \gamma y$, for some $\gamma \in \Bbbk^*$ and $g(y) \in \Bbbk[y]$. If we compare with Theorem \ref{autoJordan}, there is a little difference concerning the coefficient of the indeterminate $x$. This is because our treatment concerns the definition of homomorphism through the notion of cv-polynomial. Next, we present an example of an automorphism of $\mathcal{J}$ that does not correspond to a pair of cv-polynomials.

Consider the automorphism $\phi$ of $\mathcal{J}$ given by the polynomials $\phi(x) = p_2 = 2x + y^2$ and $\phi(y) = p_1= 2y$. Since
\begin{align*}
p_2p_1 = &\ \sigma_2(p_1)p_2 + \delta_2(p_1) = (2x + y^2)(2y) = 4xy + 2y^3 \\
    = &\ 4(\sigma_2(y)x + \delta_2(y)) + 2y^3 = 4yx - 4y^2 + 2y^3,\\
    \sigma_2(2y)(2x + y^2) + \delta_2(2y)
    = &\ 2y(2x + y^2) -2y^2 = 4yx + 2y^3 - 2y^2.
\end{align*}

it follows that $  p_2p_1 \neq \sigma_2(2y)(2x + y^2) + \delta_2(2y)$, that is, the condition (\ref{conditiontwoOre1}) does not hold.
\end{example}

\begin{theorem}\label{autoqJordan}
If $\phi$ is an automorphism of the two-step iterated Ore extension $\mathcal{J}_q$, then the corresponding cv-polynomials are given by $\phi(y)= p_1 =\gamma y$ and $\phi(x)= p_2 = x$ respect to $(\sigma_i, \delta_i)$ {\rm (}$i=1,2${\rm )}, respectively {\rm (}as in Example \ref{examplesOreiterated}{\rm (ii))}, for some element $\gamma \in \Bbbk^{*}$.
\end{theorem}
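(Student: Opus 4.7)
The plan is to adapt the strategy from Theorem \ref{autoJordan}, using the quantum parameter $q\neq 1$ to rigidify the picture. First, since $\phi$ is an automorphism, Theorem \ref{isoOreextension} forces $p_{1}=\phi(y)$ and $p_{2}=\phi(x)$ to be of degree one in $y$ and $x$ respectively, and the argument in that proof forces the coefficient of $x$ in $p_{2}$ to be a unit in $\Bbbk[y]$; since $\Bbbk[y]^{*}=\Bbbk^{*}$, this coefficient is a scalar $\gamma_{0}\in\Bbbk^{*}$. So I may write $p_{1}=\gamma y+\beta$ with $\gamma\in\Bbbk^{*}$, $\beta\in\Bbbk$, and $p_{2}=\gamma_{0}x+g(y)$ with $g(y)\in\Bbbk[y]$.

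Next, I would impose the cv-polynomial relation (\ref{conditiontwoOre1}) with respect to the original quasi-derivation $(\sigma_{2},\delta_{2})$ of $\mathcal{J}_q$. Expanding $p_{2}p_{1}$ and $\sigma_{2}(p_{1})p_{2}+\delta_{2}(p_{1})$ using $xy=qyx-y^{2}$, $\sigma_{2}(y)=qy$ and $\delta_{2}(y)=-y^{2}$, the $yx$- and $x$-coefficients balance automatically and the remaining polynomial-in-$y$ identity reduces to
\[
\gamma y\bigl[(1-\gamma_{0})\,y+(1-q)\,g(y)\bigr]=0.
\]
Since $q\neq 1$, the scalar $1-q$ is invertible, so matching the pure $y^{2}$-term of the bracketed expression yields $\gamma_{0}=1$, and the surviving identity $(1-q)\gamma g(y)y=0$ then forces $g(y)=0$. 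Thus $p_{2}=x$. The conditions (\ref{conditiontwoOre2}) and (\ref{conditiontwoOre3}) restricted to $r\in\Bbbk$ are immediate, because $\Bbbk$ is central in $\mathcal{J}_q$ and $(\sigma_{i},\delta_{i})|_{\Bbbk}=(\mathrm{id}_{\Bbbk},0)$.

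Finally, as in the proof of Theorem \ref{autoJordan}, I would invoke the existence of $\phi^{-1}$: from
\[
y=\phi^{-1}(\phi(y))=\gamma\phi^{-1}(y)+\phi^{-1}(\beta)
\]
one concludes $\beta=0$, so $p_{1}=\gamma y$. The candidate inverse $\psi(y)=\gamma^{-1}y$, $\psi(x)=x$ then satisfies $\psi\circ\phi=\phi\circ\psi=\mathrm{id}_{\mathcal{J}_{q}}$, confirming that every such $\phi$ is indeed an automorphism.

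The main obstacle is the coefficient matching in the cv-polynomial identity of the second step. In the Jordan plane $\mathcal{J}$ ($q=1$) the factor $1-q$ vanishes, leaving the tail polynomial $g(y)$ entirely free, which is why Theorem \ref{autoJordan} allows $p_{2}=x+g(y)$ for an arbitrary $g(y)\in\Bbbk[y]$. In the $q$-skew Jordan plane, by contrast, the nonzero scalar $1-q$ eliminates this freedom and collapses the family to the single choice $p_{2}=x$; this is precisely where the quantization rigidifies the automorphism structure of $\mathcal{J}_{q}$ relative to $\mathcal{J}$.
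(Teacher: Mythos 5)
Your proposal follows the paper's proof essentially step for step: the same appeal to Theorem \ref{isoOreextension} to force degree one and a unit (hence scalar) leading coefficient, the same expansion of $p_2p_1$ against $\sigma_2(p_1)p_2+\delta_2(p_1)$, and the same use of $\phi^{-1}$ to kill the constant term $\beta$. Your packaging of the computation into the single identity $\gamma y\bigl[(1-\gamma_0)y+(1-q)g(y)\bigr]=0$ is tidier than the paper's term-by-term comparison, and your closing remark on how the factor $1-q$ rigidifies $\mathcal{J}_q$ relative to $\mathcal{J}$ is exactly the right moral.

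There is, however, one deduction that does not follow as you state it, and the paper makes the same leap in an equivalent form. The identity forces $(1-\gamma_0)y+(1-q)g(y)=0$ in $\Bbbk[y]$; writing $g(y)=\sum_j g_jy^j$ and comparing coefficients gives $g_0=0$, $g_j=0$ for $j\ge 2$, and $(1-\gamma_0)+(1-q)g_1=0$, i.e.\ $g(y)=\frac{\gamma_0-1}{1-q}\,y$ with $\gamma_0\in\Bbbk^{*}$ still free. The ``pure $y^2$-term'' of $\gamma y[\cdots]$ is $\gamma\bigl[(1-\gamma_0)+(1-q)g_1\bigr]y^2$, so it does not yield $\gamma_0=1$ unless you already know $g_1=0$: the contribution of the leading coefficient and that of $g$ to the $y^2$-coefficient can cancel against each other. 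The paper commits the identical jump when it splits the required equality into $\gamma f(y)=\gamma$ and $\gamma g(y)y=\gamma qg(y)y$ as if these had to hold separately, so your argument is faithful to the source; but to actually conclude $p_2=x$ one needs an additional argument excluding the one-parameter family $p_2=\gamma_0x+\frac{\gamma_0-1}{1-q}\,y$, which satisfies conditions (\ref{conditiontwoOre1})--(\ref{conditiontwoOre3}) for every $\gamma_0\in\Bbbk^{*}$. Neither your proposal nor the paper supplies that extra step.
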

\begin{proof}
    Let $\phi$ be an automorphism of $\mathcal{J}_q$. Then $\phi: \mathcal{J}_q \cong \Bbbk[y][x;\sigma_2, \delta_2] \to \mathcal{J}_q \cong \Bbbk[y][x;\sigma_2, \delta_2]$ acts as an isomorphism on the iterated Ore extension,  and by Theorem \ref{isoOreextension}, we have that the associated cv-polynomials have a degree of 1. We define $\phi(y):= p_1 = \gamma y + \beta$, where $\gamma$ and $\beta$ are non-zero elements in $\Bbbk$, and $\phi(x) := p_2 = f(y) x + g(y)$, such that $f(y)$ and $g(y)$ are elements of $\Bbbk[y]$. Let us see that the given polynomials satisfy the conditions  (\ref{conditiontwoOre1}) - (\ref{conditiontwoOre3}):
\begin{align*}
    p_2p_1 &= (f(y) x + g(y))(\gamma y + \beta) = f(y)x \gamma y + f(y) x \beta + g(y) \gamma y + g(y)\beta\\
    &= \gamma f(y)xy + \beta f(y)x + \gamma g(y)y + \beta g(y) \\
    &= \gamma f(y)(qyx - y^2) + \beta f(y)x + \gamma g(y)y + \beta g(y)\\
    &= \gamma qf(y)yx - \gamma f(y)y^2 + \beta f(y)x + \gamma g(y)y + \beta g(y).
\end{align*}

Now,
\begin{align*}
  \sigma_2(p_1)p_2 + \delta_2(p_1) &= \sigma_2(\gamma y + \beta)(f(y) x + g(y)) + \delta_2(\gamma y + \beta)\\
  &= (\sigma_2(\gamma y) + \sigma_2(\beta))(f(y) x + g(y)) + \delta_2(\gamma y) + \delta_2(\beta)\\
  &= \gamma qyf(y)x + \gamma qy g(y) + \beta f(y)x + \beta g(y) \\
  &\ \ + \sigma_2(\gamma)\delta_2(y) + \delta_2(\gamma)y + \delta_2(\beta)\\
  &= \gamma qf(y) yx + \gamma q g(y)y + \beta f(y)x + \beta g(y) - \gamma y^2.
\end{align*}

It is necessary to ensure that the equality $\gamma f(y) = \gamma$ holds, that is, $f(y)$ is a unit, and so $f(y) = 1$. In addition, we must satisfy $\gamma g(y)y = \gamma q g(y)y$. Given that $q \neq 1$, then $g(y) = 0$. In this way, $p_2 = x$ and $p_1 = \gamma y + \beta$. It is straightforward to see that for any $r \in \Bbbk$, $ p_1r = \sigma_1(r)p_1 + \delta_1(r)$ and $p_2r = \sigma_1(r)p_2 + \delta_2(r)$ hold. Hence, conditions (\ref{conditiontwoOre1}), (\ref{conditiontwoOre2}) and (\ref{conditiontwoOre3}) are satisfied.

Since $\phi$ is an automorphism, there exists $\phi^{-1}$ such that $y = \phi^{-1}(\phi(y))$. Then
\[
y = \phi^{-1}(\phi(y)) = \phi^{-1}(p_1) = \phi^{-1}(\gamma y + \beta) = \phi^{-1}(\gamma y) + \phi^{-1}(\beta).
\]

From the last expression, $\phi^{-1}(\beta) = 0$, and so $\beta = 0$. Also, we get $\phi^{-1}(y) = \gamma^{-1} y$.

Similarly, if we consider $x = \phi^{-1}(\phi(x))$, then
\[
x = \phi^{-1}(\phi(x)) = \phi^{-1}(p_2) = \phi^{-1}(x),
\]

and so $\phi^{-1}(x) = x$, i.e., we conclude that the homomorphism described by $\phi(y)= p_1 = \gamma y$ and $\phi(x) = p_2= x$ is an automorphism of the $q$-skew Jordan plane $\mathcal{J}_q$.
\end{proof}

Consider $S = R[x_1;\sigma_1, \delta_1][x_2;\sigma_2,\delta_2]$. The $\sigma_2$-derivation $\delta_2$ is {\it quasi-algebraic} if satisfies one of the following conditions:
\begin{itemize}
    \item[\rm (i)] There exist  $q_i(x_1) \in R[x_1;\sigma_1, \delta_1]$ with $q_n(x_1) = 1$, such that $\sum_{i=1}^{n}q_i(x_1)\delta^i_2$ is an $\sigma_2^n$-inner derivation.
    \item[\rm (ii)] There exist $q_i(x_1) \in R[x_1;\sigma_1, \delta_1]$, $1\leq i \leq n$ not all zero, such that $\sum_{i=1}^{n} q_i(x_1)\delta_2^i$ is an $\sigma'_2$-inner derivation for some endomorphism $\sigma'_2$. 
\end{itemize}   

Recall that if $\sigma$ is an endomorphism of $R$, the {\it inner order} of $\sigma$, denoted by $o(\sigma)$, is defined to be the smallest positive integer $n$ such that $\sigma^n$ is an inner authomorphism; if no such integer $n$ exists, we take $o(\sigma)$ to be $\infty$. In particular, if $\sigma$ is an endomorphism which is not an automorphism, we have by definition $o(\sigma) = \infty$ \cite[p. 85]{LamLeroy1992}. Note that for a $\delta_2$ derivation, algebraic implies quasi-algebraic. This is due to the fact that $g(x_1, \delta_2) = 0$, can be expressed as a $\sigma'_2$-inner derivation, for some appropriate $q \in R[x_1; \sigma_1, \delta_1]$.

\begin{theorem}\label{injective_n=2}
Consider iterated Ore extensions $S'= R[x'_1;\sigma'_1,\delta'_1][x'_2; \sigma'_2, \delta'_2]$ and $S= R[x_1;\sigma_1,\delta_1][x_2; \sigma_2, \delta_2]$. Assume that $\sigma_i\delta_i = -\delta_i\sigma_i$, and for $1 \leq i \leq 2$ either  
\begin{itemize}
    \item[\rm (1)] $p_2$ can be written as $p_2= q(x_2) + f(x_1)$ and $\sigma_i$ is an automorphism with $o(\sigma_i) = \infty$, or
    \item[\rm (2)] $\delta_i$ is not quasi-algebraic.    
\end{itemize}
If $\phi: S' \to S$ and $\psi: S \to S'$ are injective ring homomorphism, then both are isomomorphisms. 
\end{theorem}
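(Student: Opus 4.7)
The plan is to mimic the strategy of \cite[Theorem 5.6]{LamLeroy1992} by combining the degree criterion of Theorem \ref{isoOreextension} with a composition argument on cv-polynomials. Write $p_i = \phi(x'_i)$ and $q_i = \psi(x_i)$ for the cv-polynomials of $\phi$ and $\psi$. As in the degree analysis developed in the proof of Theorem \ref{isoOreextension}, injectivity of $\phi$ forces $\deg_{x_1}(p_1) = n_1 \ge 1$ and $\deg_{x_2}(p_2) = n_2 \ge 1$, since otherwise a nontrivial kernel element is manufactured exactly as in cases (1)--(3) of that proof. The same reasoning applied to $\psi$ produces integers $m_1, m_2 \ge 1$. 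The task is to upgrade these inequalities to equalities $n_i = m_i = 1$, after which Theorem \ref{isoOreextension} finishes the argument.

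Form the composition $\Phi := \psi \circ \phi : S' \to S'$, an injective endomorphism whose associated cv-polynomials are $\Phi(x'_1) = \psi(p_1)$ and $\Phi(x'_2) = \psi(p_2)$, with $x'_1$-degree $n_1 m_1$ and $x'_2$-degree $n_2 m_2$ respectively. By Theorem \ref{Theoremaversion2} applied to $\Phi(x'_1)$ as a cv-polynomial with respect to $(\sigma'_1,\delta'_1)$, the leading-coefficient identity gives
\[
\sigma'_1 = J_1 \circ (\sigma'_1)^{n_1 m_1} \quad \text{and analogously} \quad \sigma'_2 = J_2 \circ (\sigma'_2)^{n_2 m_2}
\]
modulo inner automorphisms of the appropriate coefficient ring (for the second equality I invoke Theorem \ref{teo2.12_paran=2}(1) on $\Phi(x'_2)$). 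Chaining with the relations coming from $\phi$ and $\psi$ individually via Theorems \ref{Theoremaversion2} and \ref{teo2.12_paran=2}(1), namely $\sigma'_i \equiv \sigma_i^{n_i}$ and $\sigma_i \equiv (\sigma'_i)^{m_i}$ in the outer-automorphism quotient, one concludes that $\sigma_i^{n_i m_i - 1}$ is inner for $i=1,2$. The hypothesis $\sigma_i \delta_i = -\delta_i \sigma_i$ is what makes Theorems \ref{Theoremaversion2} and \ref{teo2.12_paran=2}(1) applicable.

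Under hypothesis (1), the equality $o(\sigma_i) = \infty$ forces $n_i m_i - 1 = 0$, so $n_i = m_i = 1$; the specific form $p_2 = q(x_2) + f(x_1)$ is what permits a clean separation of variables when reading off the leading coefficient of $\Phi(x'_2)$ in the $x'_2$-direction, thereby preventing cross-monomials from spoiling the inner-order reduction. Under hypothesis (2), Theorem \ref{teo2.12_paran=2}(2) characterises $\delta'_i$ uniquely as $(p_i - \text{constant term})(\delta_i) + \delta_{(\text{constant}, \sigma'_i)}$; substituting the analogous characterisation coming from $\psi$ into the cv-polynomial of $\Phi$ and comparing with its own quasi-algebraic expansion yields a relation of the form $\sum_j P_j \delta_i^{\,j} = \delta_{a,\sigma}$ with nonzero top coefficient whenever $n_i m_i > 1$, exhibiting $\delta_i$ as quasi-algebraic and contradicting the hypothesis. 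Hence again $n_i = m_i = 1$, and Theorem \ref{isoOreextension} concludes.

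The main obstacle is the two-variable leading-coefficient bookkeeping in the composition step. Unlike the single-variable setting of Lam and Leroy, the leading coefficient of $\Phi(x'_2)$ lives in $R[x'_1;\sigma'_1,\delta'_1]$ and can interact nontrivially with the first-level inner structure under twists by $\sigma'_1$; tracking the $\sigma$ and $\delta$ conjugations through $\psi \circ \phi$ without losing the leading term is precisely where either the structural restriction on $p_2$ in hypothesis (1) or the non-quasi-algebraicity of $\delta_i$ in hypothesis (2) is used to close the argument.
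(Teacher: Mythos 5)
Your proposal follows essentially the same route as the paper's proof: form the composite endomorphism, note via Theorem \ref{isoOreextension} that failure of bijectivity forces a cv-polynomial of degree at least two, and then apply Theorems \ref{Theoremaversion2} and \ref{teo2.12_paran=2} to contradict either $o(\sigma_i)=\infty$ (via the leading-coefficient identity making a power of $\sigma_i$ inner) or the non-quasi-algebraicity of $\delta_i$. The only cosmetic difference is that you compose as $\psi\circ\phi$ on $S'$ and must chain the outer-automorphism relations back to the unprimed data, whereas the paper works directly with $\phi\circ\psi$ on $S$, where the hypotheses on $\sigma_i$ and $\delta_i$ live.
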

\begin{proof}
Assume that one of the homomorphisms $\phi$ or $\psi$ is not an isomorphism. Then $\phi \circ \psi: S \to S$ is not an isomorphism, and by Theorem $\ref{isoOreextension}$, the associated cv-polynomials $p_1$ and $p_2$ have degree $\geq 2$. Suppose the condition $(1)$. First, we denote  $p_2 = \sum_{j = 0}^m b_{j}x_2^{j} + f(x_1)$ with  $m \geq 2$.  By Theorem \ref{teo2.12_paran=2}$(1)$, $b_{j}\sigma_2^j(-) = \sigma_2(-)b_{j}$, and since $\sigma_2$ is an automorphism, $\sigma_2^{j-1} = \sigma_{2(b_{j}^{-1})}$, for every $1 \leq j \leq m$, so $o(\sigma_2) \leq j-1 \leq m-1 <\infty$, a contradiction. In an analogous way, for $p_1 = \sum_{i=0}^{n} a_ix_1^i$ with $n\geq 2$,  $a_i\sigma_1^i = \sigma_1a_i$, and this implies that $\sigma_1^{i-1} = \sigma_{1(a_i^{-1})}$, for every $1 \leq i \leq n$, thus $o(\sigma_1) \leq i -1 \leq n-1 < \infty$.

Suppose that $\delta_2$ is not quasi-algebraic. Then $\delta_2$ is not algebraic and by Theorem \ref{teo2.12_paran=2}$(2)$, $\delta_2 = (p_2 - B_{00})(\delta_2) + \delta_{2(B_{00}, \sigma_2)}$, where $B_{00} = \sum_{i = 0}^nb_{i0}x_1^{i}x_2^{0}$. Since $p_2$ has degree $m \geq 2$, 
\[
\delta_2 - \sum_{i = 0}^n \sum_{j = 1}^m b_{ij}x_1^{i}\delta_2^{j} = \delta_2 -  \sum_{j = 1}^m q_j(x_1)\delta_2^{j}=\delta_{2(B_{00}, \sigma_2)},
\]

with $q_j(x_1) = b_{ij}x_1^i$ for $0 \leq i \leq n$, whence $\delta_2$ is a $\sigma'_2$-inner derivation with $\sigma'_2 = \sigma_2^m$, i.e, $\delta_2$ is quasi-algebraic, a contradiction. Following the same reasoning, Theorem \ref{teo2.12_paran=2}$(2)$ implies that $\delta_1 = (p_1 - a_0)\delta_1 + \delta_{1(a_0, \sigma'_1)}$, which shows that $\delta_1$ is quasi-algebraic, and we get a contradiction.
\end{proof}

\begin{remark}
It is worth asking whether the theorem is still valid if we change the hypothesis $(2)$ so that $\delta_2$ is not algebraic. This observation was studied by Lam and Leroy \cite[Remark 5.7]{LamLeroy1992}, where they showed in the case $n = 1$ that with a quasi-algebraic but not algebraic derivation two injective homomorphisms can be constructed, but these do not lead to an isomorphism. Now, in our case study, for $n=2$ it suffices to fix the ring $R[x_1]$ for both Ore extensions, which gives the same result.
\end{remark}

\begin{example}
For $\Bbbk$ a field of characteristic zero and an element $f(x)\in \Bbbk[x]$, Alev and Dumas \cite{AlevDumas1997} considered the algebra $\Lambda$ defined as the Ore extension 
    \[\Lambda:= \Lambda(f(x)) := \Bbbk[x]\left[y;\delta:= f(x) \frac{d}{dx}\right] = \Bbbk \langle x, y \mid yx - xy = f(x) \rangle.
    \]
Note that if $f(x) =  0$, then $\Lambda(0) = \Bbbk[x,y]$, and if $f(x) = 1$, then $\Lambda(1)$ is the first Weyl algebra. They investigated the group of automorphisms of $\Lambda$, and showed that for elements $f(x),\ g(x) \in \Bbbk[x]$, $\Lambda(f(x)) \cong \Lambda(g(x))$ if and only if $g(x) = \lambda f(\alpha x + \beta)$ for some elements $\lambda, \alpha \in \Bbbk^{*}$ and $\beta \in \Bbbk$ \cite[Proposition 3.6(i)]{AlevDumas1997}. As one can check after some computations, for some particular values of $\lambda, \alpha$ and $\beta$, the theory of cv-polynomials developed above illustrates Alev and Dumas's result.
\end{example}

To finish this section, we present some remarks about eigenvector interpretations for the coefficient vector of a cv-polynomial formulated by Lam and Leroy \cite[Theorem 2.16]{LamLeroy1992}. Before, recall that for the Ore extension $R[x; \sigma, \delta]$ and any element $r \in R$, $x^ir = \sum\limits_{j=0}^{i} f_{j}^{i}(r)x^i$, where $f_{j}^{i} \in {\rm End}(R, +)$ is the sum of all possible products with $j$ factors of $\sigma$ and $i-j$ factors of $\delta$. For example $f_j^j = \sigma^j$, $f_0^i=\delta^i$ and $f_{j-1}^j = \sigma^{j-1}\delta + \sigma^{j-2}\delta\sigma + \cdots + \delta\sigma^{j-1}$, in general $f_j^i = \binom{i}{j} \sigma^j\delta^{i-j}$. Let $M_n(r)$ denote the $n \times n$ matrix whose $(i,j)$-entry is $f_{j}^{i}(r)$ with $1 \leq i, j \leq n$, where $f_{j}^{i}(r)$ is taken to be zero if $i <j$, that is, $M_n(r)$ is a lower triangular matrix \cite[Section 2]{LamLeroy1988a}. 

\begin{proposition}[{\cite[Theorem 2.16]{LamLeroy1992}}]\label{LamLeroy1992Theorem2.16}
Let $p(x) = \sum\limits_{i=0}^{n} b_ix^i \in S = D[x; \sigma, \delta]$ be a polynomial of degree $n \geq 0$. Then the following assertions are equivalent:
\begin{itemize}
    \item [\rm (1)] $p(x)$ is a cv-polynomial.
    \item[\rm (2)] For any $a \in D$, and $j = 1, 2, \ldots, n$, we have $\sum\limits_{i=j}^{n} b_if_{j}^{i}(a) = b_n\sigma^n(a)b_{n}^{-1}b_j$.
    \item[\rm (3)] $[b_1, \ldots, b_n]$ is a left eigenvector for each matrix $M_n(a)$, where $a \in D$.
    \item[\rm (4)] $p(x)D \subseteq Dp(x) + D$.
\end{itemize}
If these conditions hold, then $up(x) + c$ is also a cv-polynomial, for any $u, c \in D$.
\end{proposition}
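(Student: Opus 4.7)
The plan is to drive every equivalence off a single computation. Using the identity $x^{i}a = \sum_{j=0}^{i} f_{j}^{i}(a)\, x^{j}$, I expand
\[
p(x)\,a \;=\; \sum_{j=0}^{n}\Bigl(\sum_{i=j}^{n} b_{i}\,f_{j}^{i}(a)\Bigr) x^{j},
\]
and compare it coefficient-by-coefficient with $\sigma'(a)p(x) + \delta'(a) = \sum_{j=0}^{n}\sigma'(a) b_{j} x^{j} + \delta'(a)$. Matching the coefficient of $x^{n}$ forces $\sigma'(a) = b_{n}\sigma^{n}(a) b_{n}^{-1}$, well defined because $D$ is a division ring and $b_{n}\neq 0$, recovering the expression from Proposition~\ref{LamLeroy1992Theorem2.12}. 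Substituting it into the coefficient of $x^{j}$ for $1\le j\le n-1$ gives exactly the identity (2), while the coefficient of $x^{0}$ reads off $\delta'(a) = \sum_{i=0}^{n}b_{i}\delta^{i}(a) - \sigma'(a) b_{0}$. Running this backwards yields (2)$\Rightarrow$(1), so (1)$\Leftrightarrow$(2).

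Equivalence (2)$\Leftrightarrow$(3) is a direct reformulation: since $M_{n}(a)$ is lower triangular with $(i,j)$-entry $f_{j}^{i}(a)$, the $j$-th entry of the row product $[b_{1},\ldots,b_{n}] M_{n}(a)$ equals $\sum_{i=j}^{n} b_{i}\,f_{j}^{i}(a)$, and (2) says that this is the scalar $b_{n}\sigma^{n}(a) b_{n}^{-1}$ times $b_{j}$, i.e.\ that $[b_{1},\ldots,b_{n}]$ is a left eigenvector of $M_{n}(a)$. The nontrivial step is then (1)$\Leftrightarrow$(4). The implication (1)$\Rightarrow$(4) is immediate; for the converse, given (4) write $p(x)\,a = c_{a}\, p(x) + d_{a}$ for each $a\in D$, compare leading coefficients to obtain $c_{a} = b_{n}\sigma^{n}(a)b_{n}^{-1}$ uniquely (and hence $d_{a}$ too), and set $\sigma'(a):=c_{a}$, $\delta'(a):=d_{a}$. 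The property that $(\sigma',\delta')$ is a quasi-derivation is extracted from associativity $p(x)(ab) = (p(x)a)\,b$ combined with the uniqueness of the decomposition just established.

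I expect the verification in (4)$\Rightarrow$(1) -- that $\sigma'$ is multiplicative and $\delta'$ is a $\sigma'$-derivation -- to be the main bookkeeping obstacle, since it requires expanding $p(x)(ab)$ in two ways, applying (1) on the inner factor $p(x)a$, and then equating the $p(x)$-part and the constant part. For the closing assertion, taking $u\in D^{*}$ (the case $u=0$ being trivial) and any $c\in D$, a direct calculation using (1) gives
\[
(up(x)+c)\,a \;=\; \bigl(u\sigma'(a)u^{-1}\bigr)(up(x)+c) \;+\; \bigl[u\delta'(a) + ca - u\sigma'(a)u^{-1} c\bigr],
\]
so $up(x)+c$ is a cv-polynomial respect to the quasi-derivation $(\sigma_{u}\circ \sigma',\; u\delta' + \delta_{c,\sigma_{u}\circ\sigma'})$; one checks that $u\delta'$ is indeed a $(\sigma_{u}\circ\sigma')$-derivation, and the sum with $\delta_{c,\sigma_{u}\circ\sigma'}$ stays in the same class.
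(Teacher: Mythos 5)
You should be aware that the paper never proves Proposition~\ref{LamLeroy1992Theorem2.16}: it is quoted from Lam and Leroy, the surrounding text (Examples~\ref{exampleteo216}) argues that conditions (2) and (3) must be modified, and the proof the paper actually supplies is of the amended statement, Theorem~\ref{LamLeroy1992Theorem2.16corrected}, whose condition (2) reads $\sum_{i=j}^{n} a_if_{j}^{i}(r) = a_j\sigma_1^j(r)$ under $\sigma_1\delta_1=-\delta_1\sigma_1$ and whose condition (3) uses the vectors $[0,\ldots,a_i,\ldots,0]$ separately, these being the stronger constraints extracted in Theorem~\ref{Theoremaversion2}. So there is no ``paper proof'' of the statement as literally posed for you to match; the closest comparison is with the corrected theorem.

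That said, your argument is the original Lam--Leroy route and, as far as I can check, it is sound over a division ring. Coefficient comparison in $p(x)a=\sum_{j}\bigl(\sum_{i\ge j}b_if_j^i(a)\bigr)x^j=\sigma'(a)p(x)+\delta'(a)$ is legitimate because $1,x,\dots,x^n$ are left-linearly independent over $D$; the top coefficient forces $\sigma'(a)=b_n\sigma^n(a)b_n^{-1}$ since $b_n\in D^{*}$, and substituting this into the coefficient of $x^j$ is precisely (2). In particular your derivation shows (2) is a \emph{consequence} of (1), not an additional hypothesis. The apparent failures in Examples~\ref{exampleteo216} arise from measuring the $x^j$-coefficients against a $\sigma'$ other than the one forced by the leading coefficient: the hypotheses $b_i\sigma^i(a)=\sigma'(a)b_i$ imposed there simultaneously for $i=1,2$ already entail the disputed identity $b_1\sigma(a)=b_2\sigma^2(a)b_2^{-1}b_1$ (or are mutually inconsistent, as in the $q$-skew case), so they do not expose a gap in your proof. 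What the paper's corrected version records is the extra structure enjoyed by the particular cv-polynomials built in Tables~\ref{firsttable}--\ref{thirdtable}, which is a different (and stronger) condition than (2) above.

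Two points you should still tighten. In (4)$\Rightarrow$(1), uniqueness of $c_a$ from the leading coefficient needs $n\ge 1$; for $n=0$ the decomposition is not unique and one instead chooses $\sigma'$ arbitrarily and takes $\delta'=\delta_{b_0,\sigma'}$. And besides multiplicativity of $\sigma'$ and the $\sigma'$-Leibniz rule for $\delta'$ (which you correctly extract from $p(x)(ab)=(p(x)a)b$ plus uniqueness), you should note that additivity of $\sigma'$ and $\delta'$ also follows from uniqueness applied to $p(x)(a+b)=p(x)a+p(x)b$; without it $(\sigma',\delta')$ is not yet a quasi-derivation.
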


The following example illustrates Proposition \ref{LamLeroy1992Theorem2.16}.

\begin{example}
Consider the Example (2.10) presented in \cite[p. 87]{LamLeroy1992}. The polynomial $p(x) = x^2 + b_0 \in S = D[x; \sigma, \delta]$ is a cv-polynomial respect to the quasi-derivation $(\sigma^2, \delta^2 + \delta_{(c,\sigma^2)})$ on $D$, where $\sigma\delta = -\delta\sigma$. By Proposition \ref{LamLeroy1992Theorem2.16}(2), for any $a \in D$ and $j = 1, 2, \dotsc, n$, we get $\sum\limits_{i=j}^{n} b_if_{j}^{i}(a) = b_n\sigma^n(a)b_{n}^{-1}b_j$. In particular, for $j=1$,
\begin{align*}
        \sum\limits_{i=1}^{2} b_if_{1}^{i}(a) = b_1f_{1}^{1}(a) +  b_2f_{1}^{2}(a) = 0\sigma(a) + 1\sigma\delta(a) + 1\delta\sigma(a) = 0,
\end{align*}

and for $j=2,\ \sum\limits_{i=2}^{2} b_if_{2}^{i}(a) =  b_2f_{2}^{2}(a) = \sigma^2(a)$. Then, for $j = 1$, $\sum\limits_{i=1}^{2} b_if_{j}^{i}(a) = 0 = b_2\sigma^2(a)b_{2}^{-1}0$, and for $j = 2$,  $\sum\limits_{i=2}^{2} b_if_{2}^{i}(a) = \sigma^2(a) = b_2\sigma^2(a)b_{2}^{-1}b_2$ as Proposition \ref{LamLeroy1992Theorem2.16}(2) asserts. 

On the other hand, following Proposition \ref{LamLeroy1992Theorem2.16}(3), $[b_1 \ \ b_2]$ is a left eigenvector for each $a \in D$ of the matrix $M_2(a)$ with eigenvalue $b_2\sigma^2(a)b_2^{-1}$, whence
    \begin{align*}
        [b_1 \ \ b_2] \begin{bmatrix}f_1^1(a) & f_2^1(a) \\ f_1^2(a) & f_2^2(a)\end{bmatrix} &= b_2\sigma^2(a) b_2^{-1}[b_1 \ \ b_2]\\
        [0 \ \ 1] \begin{bmatrix} \sigma(a) & 0 \\ \sigma\delta(a) + \delta\sigma(a) & \sigma^2(a) \end{bmatrix} &= \sigma^2(a)[0 \ \ 1]\\
        [\sigma\delta(a) + \delta\sigma(a) \ \ \sigma^2(a)] &= [0 \ \ \sigma^2(a)] \\
        [0 \ \ \sigma^2(a)] &= [0 \ \ \sigma^2(a)].
    \end{align*}

Note that without the condition that $\sigma\delta(a) = -\delta\sigma (a)$, Proposition \ref{LamLeroy1992Theorem2.16} would be false. Examples \ref{exampleteo216} illustrates this situation.
\end{example}

\begin{examples}\label{exampleteo216}
\begin{enumerate}
\item [\rm (i)] Consider $p(x) = b_2x^2 + b_1x + b_0 \in S = D[x;\sigma, \delta]$ (Example (\ref{Examplep2=c})). This polynomial is a cv-polynomial respect to the quasi-derivation $(\sigma', \delta' = b_2\delta^2 + b_1\delta + \delta_{(b_0, \sigma')})$ whenever $\sigma\delta = - \delta\sigma$ and $b_i\sigma^i(a) = \sigma'(a)b_i$. Now, for any $a \in D$ and $j = 1, 2, \ldots, n$, by Proposition \ref{LamLeroy1992Theorem2.16}(2), we get $\sum\limits_{i=j}^{n} b_if_{j}^{i}(a) = b_n\sigma^n(a)b_{n}^{-1}b_j$. For $j=1$,
    \begin{align*}
        \sum\limits_{i=1}^{2} b_if_{1}^{i}(a) &=  b_1f_{1}^{1}(a) +  b_2f_{1}^{2}(a) = b_1\sigma(a) + b_2\sigma\delta(a) + b_2\delta\sigma(a) = b_1\sigma(a).
    \end{align*}

while for $j=2,\  \sum\limits_{i=2}^{2} b_if_{2}^{i}(a) =  b_2f_{2}^{2}(a) = b_2\sigma^2(a)$. In this way, for $j = 1$, we obtain $\sum\limits_{i=1}^{2} b_if_{1}^{i}(a) = b_1\sigma(a) \neq b_2\sigma^2(a)b_{2}^{-1}b_1$, and for $j = 2$, $\sum\limits_{i=2}^{2} b_if_{2}^{i}(a) = b_2\sigma^2(a) = b_2\sigma^2(a)b_{2}^{-1}b_2$.

According to Proposition \ref{LamLeroy1992Theorem2.16}(3), $[b_1 \ \ b_2]$ is a left eigenvector for each $a \in D$ of the matrix $M_2(a)$ with eigenvalue $b_2\sigma^2(a)b_2^{-1}$, which implies that
    \begin{align*}
        [b_1 \ \ b_2] \begin{bmatrix}f_1^1(a) & f_2^1(a) \\ f_1^2(a) & f_2^2(a)\end{bmatrix} &= b_2\sigma^2(a) b_2^{-1}[b_1 \ \ b_2]\\
        [b_1 \ \ b_2] \begin{bmatrix} \sigma(a) & 0 \\ \sigma\delta(a) + \delta\sigma(a) & \sigma^2(a) \end{bmatrix} &= b_2\sigma^2(a)b_2^{-1}[b_1 \ \ b_2]\\
        [b_1\sigma(a) + b_2\sigma\delta(a) + b_2\delta\sigma(a) \ \ b_2\sigma^2(a)] &= [b_2\sigma^2(a)b_2^{-1}b_1 \ \ b_2\sigma^2(a)] \\
        [b_1\sigma(a) \ \ b_2\sigma^2(a)] & = [b_2\sigma^2(a)b_2^{-1}b_1 \ \ b_2\sigma^2(a)].
    \end{align*}

However, it is clear that this equality is not necessarily true. 

\item [\rm (ii)] Consider the polynomial $p = b_2x^2 + b_1x + b_0 \in \mathcal{J}_p(\Bbbk) \cong \Bbbk[y][x;\sigma, \delta]$ (Example \ref{examplesOreiterated}(ii)). Suppose that $p$ is a cv-polynomial respect to the quasi-derivation respect to $(\sigma', \delta' = b_2\delta^2 + b_1\delta + \delta_{(b_0, \sigma')})$, whenever $\sigma\delta = - \delta\sigma$ and $b_i\sigma^i(y) = \sigma'(y)b_i$. For $y \in \Bbbk[y]$, and $j = 1, 2, \ldots, n$, by Proposition \ref{LamLeroy1992Theorem2.16}(2), $\sum\limits_{i=j}^{n} b_if_{j}^{i}(y) = b_n\sigma^n(y)b_{n}^{-1}b_j$, so for $j=1$,
\[
\sum\limits_{i=1}^{2} b_if_{1}^{i}(y) =  b_1f_{1}^{1}(y) +  b_2f_{1}^{2}(y) = b_1\sigma(y) + b_2\sigma\delta(y) + b_2\delta\sigma(y) = b_1\sigma(y) = b_1qy
\]

and for $j=2,\ \sum\limits_{i=2}^{2} b_if_{2}^{i}(y) =  b_2f_{2}^{2}(y) = b_2\sigma^2(y) = b_2q^2y$. In this way, for $j = 1,\ \sum\limits_{i=1}^{2} b_if_{1}^{i}(y) = b_1\sigma(y) = b_1qy \neq b_2\sigma^2(y)b_{2}^{-1}b_1 = b_1q^2y$, and for $j = 2$, $\sum\limits_{i=2}^{2} b_if_{2}^{i}(y) = b_2\sigma^2(y) = b_2q^2y =  b_2\sigma^2(y)b_{2}^{-1}b_2$.

Now, Proposition \ref{LamLeroy1992Theorem2.16}(3) asserts that $[b_1 \ \ b_2]$ is a left eigenvector for $y \in \Bbbk[y]$ of the matrix $M_2(y)$ with eigenvalue $b_2\sigma^2(y)b_2^{-1}$, that is, 
    \begin{align*}
        [b_1 \ \ b_2] \begin{bmatrix}f_1^1(y) & f_2^1(y) \\ f_1^2(y) & f_2^2(y)\end{bmatrix} &= b_2\sigma^2(y) b_2^{-1}[b_1 \ \ b_2]\\
        [b_1 \ \ b_2] \begin{bmatrix} \sigma(y) & 0 \\ \sigma\delta(y) + \delta\sigma(y) & \sigma^2(y) \end{bmatrix} &= b_2\sigma^2(y)b_2^{-1}[b_1 \ \ b_2]\\
        [b_1\sigma(y) + b_2\sigma\delta(y) + b_2\delta\sigma(y) \ \ b_2\sigma^2(y)] &= [b_2\sigma^2(y)b_2^{-1}b_1 \ \ b_2\sigma^2(y)] \\
        [b_1\sigma(y) \ \ b_2\sigma^2(y)] &\neq [b_2\sigma^2(y)b_2^{-1}b_1 \ \ b_2\sigma^2(y)] \\
         [b_1qy \ \ b_2q^2y] &\neq [b_1q^2y\ \ b_2q^2y].
    \end{align*} 
\end{enumerate}
\end{examples} 

Our corrected version of \cite[Theorem 2.16]{LamLeroy1992} is the content of Theorem \ref{LamLeroy1992Theorem2.16corrected}. 

\begin{theorem}\label{LamLeroy1992Theorem2.16corrected}
    Let $p_1(x) = \sum\limits_{i=0}^{n} a_ix_1^i \in S = R[x_1; \sigma_1, \delta_1]$ be a polynomial of degree $n \geq 0$. Then the following assertions are equivalent:
\begin{itemize}
    \item [\rm (1)] $p_1(x_1)$ is a cv-polynomial.
\item[\rm (2)] For any $r \in R$, and $j = 1, 2, \ldots, n$, we have $\sum\limits_{i=j}^{n} a_if_{j}^{i}(r) =  a_j\sigma_1^j(r)$ whenever $\sigma_1\delta_1 = -\delta_1\sigma_1$.
    \item[\rm (3)] $[a_1, \ldots, 0], \dotsc, [0, \ldots, a_i, \ldots, 0]$ are left eigenvectors for each matrix $M_n(r)$, where $r \in R$. 
    \item[\rm (4)] $p_1(x_1)R \subseteq Rp_1(x_1) + R$.
\end{itemize}
If these conditions hold, then $up_1(x_1) + c$ is also a cv-polynomial, for any $u, c \in R$.
\end{theorem}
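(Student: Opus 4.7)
The plan is to establish the cyclic chain of implications (1) $\Rightarrow$ (2) $\Rightarrow$ (3) $\Rightarrow$ (4) $\Rightarrow$ (1), in parallel with Lam and Leroy's original argument but incorporating the structural identities from Theorem \ref{Theoremaversion2}. For (1) $\Rightarrow$ (2), assume $p_1$ is a cv-polynomial with associated quasi-derivation $(\sigma'_1, \delta'_1)$. Theorem \ref{Theoremaversion2} then yields the two key identities $\sigma_1 \delta_1 = -\delta_1 \sigma_1$ and $\sigma'_1(r) a_j = a_j \sigma_1^j(r)$ for every $j \geq 1$. Expanding the defining equation $p_1 r = \sigma'_1(r) p_1 + \delta'_1(r)$ via the commutation rule $x_1^i r = \sum_{j=0}^{i} f_j^i(r) x_1^j$ and comparing coefficients of $x_1^j$ for $j \geq 1$ produces exactly the identity in (2).

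For (2) $\Rightarrow$ (3), recall that $M_n(r)$ is lower triangular with diagonal entries $\sigma_1^j(r)$, so the $k$-th entry of $(a_i e_i)\, M_n(r)$ is $a_i f_k^i(r)$, automatically zero for $k > i$ and equal to $a_i \sigma_1^i(r)$ for $k = i$. To show $a_i e_i$ is a left eigenvector I must verify that $a_i f_k^i(r) = 0$ for $k < i$. This is achieved by a downward induction on $k$ starting from $k = n-1$, using the identity in (2) to isolate successive terms and invoking the anticommutation $\sigma_1 \delta_1 = -\delta_1 \sigma_1$, which forces many of the maps $f_k^i$ to vanish identically. Once this holds, (3) $\Rightarrow$ (4) follows by substituting into $p_1 r = a_0 r + \sum_{i \geq 1} a_i x_1^i r$, applying the individual vanishing to collect only the diagonal terms, and using the common eigenvalue $\lambda(r)$ arising from the relations $a_i \sigma_1^i(r) = \lambda(r) a_i$ to obtain $p_1 r = \lambda(r) p_1(x_1) + t(r) \in R p_1(x_1) + R$.

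For (4) $\Rightarrow$ (1), given the decomposition $p_1 r = s(r) p_1 + t(r)$, define $\sigma'_1(r) := s(r)$ and $\delta'_1(r) := t(r)$. Uniqueness of the normal form of elements in $R[x_1;\sigma_1,\delta_1]$ guarantees these maps are well-defined, and a routine verification shows that $\sigma'_1$ is an endomorphism and $\delta'_1$ is a $\sigma'_1$-derivation, making $p_1$ a cv-polynomial. The final assertion that $u p_1(x_1) + c$ remains a cv-polynomial reduces immediately to (4), since $R(u p_1 + c) + R = R p_1 + R$.

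The main obstacle is the step (2) $\Rightarrow$ (3): condition (2) only yields the sum identity $\sum_{i>j} a_i f_j^i(r) = 0$, whereas (3) demands the individual vanishing $a_i f_j^i(r) = 0$ for each $i > j$. Extracting this stronger cancellation requires combinatorial analysis of the maps $f_j^i$ under the anticommutation relation, where each $f_j^i$ reduces (up to sign) to $\binom{i}{j}_{-1} \sigma_1^j \delta_1^{i-j}$ and vanishes identically precisely when $\binom{i}{j}_{-1} = 0$. The decoupling then proceeds inductively, with the vanishing cases produced by the anticommutation serving as the base to peel off the nonzero terms one at a time.
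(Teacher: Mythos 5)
Your proposal follows the paper's proof almost exactly: the same cyclic chain $(1)\Rightarrow(2)\Rightarrow(3)\Rightarrow(4)\Rightarrow(1)$, the same use of Theorem \ref{Theoremaversion2} to get $\sigma_1\delta_1=-\delta_1\sigma_1$ and $a_j\sigma_1^j(r)=\sigma_1'(r)a_j$ for $(1)\Rightarrow(2)$, the same eigenvalue computation $\sum_{i=j}^n a_if_j^i(r)=\alpha(r)a_j$ feeding the expansion of $p_1(x_1)r$ for $(3)\Rightarrow(4)$, the same definition $\sigma_1'(r):=s(r)$, $\delta_1'(r):=t(r)$ for $(4)\Rightarrow(1)$, and the same containment $u(Rp_1+R)+cR\subseteq R(up_1+c)+R$ for the final claim.

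The one place you diverge is $(2)\Rightarrow(3)$, and it is worth being precise about what happens there. You correctly identify that condition (2) only yields the aggregate identity $\sum_{i=j+1}^{n}a_if_j^i(r)=0$, whereas statement (3) requires the individual vanishing $a_if_j^i(r)=0$ for each $i>j$ (otherwise $[0,\ldots,a_i,\ldots,0]$ is not an eigenvector). The paper's proof silently asserts this decoupling, so you have spotted a real gap in the published argument. However, your proposed repair does not close it. Under $\sigma_1\delta_1=-\delta_1\sigma_1$ one indeed has $f_j^i=\pm\binom{i}{j}_{-1}\sigma_1^{j}\delta_1^{i-j}$, but $\binom{i}{j}_{-1}$ vanishes only when $i$ is even and $j$ is odd; it does not vanish whenever $i>j$. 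For instance, for $n=5$ and $j=1$ the sum reduces to $\pm a_3\delta_1^{2}\sigma_1(r)\pm a_5\delta_1^{4}\sigma_1(r)=0$, and there is no induction that separates these two surviving terms without additional hypotheses (such as injectivity of $\sigma_1$, invertibility of leading coefficients, or a degree argument on $\delta_1$). So the "peeling" step is not established, and as written both your proof and the paper's leave $(2)\Rightarrow(3)$ unproved in general; the remaining implications, and the final statement about $up_1(x_1)+c$, are fine and match the paper.
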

\begin{proof}
If $p_1(x_1)$ has degree $n=0$, the conditions follow immediately. 
\begin{itemize}
\item $(1) \Rightarrow (2)$ Let $p_1(x)$ be a cv-polynomial respect to $(\sigma'_1, \delta'_1)$. For any $r \in R$, 
\begin{equation}\label{Teo2.16,1}
    p_1(x_1)r = \sum\limits_{i=0}^{n} a_ix_1^i r = \sum\limits_{i=0}^{n} a_i \sum\limits_{j=0}^{i}f_{j}^{i}(r) x_1^j = \sum\limits_{j=0}^{n} \left( \sum\limits_{i=j}^{n} a_if_{j}^{i}(r)\right)x_1^j.
\end{equation}

Now, by Theorem \ref{Theoremaversion2},
\begin{equation}\label{Teo2.16,2}
    p_1(x_1)r = \sigma'_1(r)p_1(x_1) + \delta'_1\notag = a_j\sigma_1^ja_j^{-1}\sum\limits_{i=0}^{n} a_ix_1^i + \delta'_1(r). 
\end{equation}

These two expressions show that identity is satisfied for each $j = 1, 2, \ldots, n$, whenever that $\sigma_1\delta_1 = - \delta_1\sigma_1$.

\item $(2) \Rightarrow (3)$ The matrix $M_n(r)$ that is composed by $(i,j)-$entry of the $f_j^i(r)$ with $1 \leq i,j \leq n$, so the equation in Part (2) can be expressed in the form
\begin{align*}
[a_1, \ldots, 0] M_n(r) &\ = a_1\sigma_1a_1^{-1} [a_1, \ldots, 0]\\
&\ \ \ \vdots \\
[0, \ldots, a_i, \ldots, 0] M_n(r) &\ = a_i\sigma_1^ia_i^{-1}[0, \ldots, a_i, \ldots, 0],
\end{align*}

for each $r \in R$ and $0\leq i \leq n$, and $[0, \ldots, a_i, \ldots, 0]$ is a left eigenvector for $M_n(r)$ with eigenvalue $a_i\sigma_1^ia_i^{-1}$.

\item $(3) \Rightarrow (4)$ For every $r \in R$, 
\[
[0, \ldots, a_i, \ldots, 0]M_n(r) = \alpha(r) [0, \ldots, a_i, \ldots, 0],
\]
for some eigenvalue $\alpha(r) \in R$. Thus, for $1 \leq j \leq n$, we obtain $\sum\limits_{i=j}^{n} a_if_{j}^{i}(r) =  (\alpha(r))a_j$. Using this fact in the expression (\ref{Teo2.16,1}),
\[
p_1(x_1)r \in \sum_{j=1}^n (\alpha(r))a_jx_1^j + R \subseteq R p_1(x_1) + R,\quad {\rm for\ any}\ r\in R.
\]

\item $(4) \Rightarrow (1)$ For any $r \in R$, we have determined constants $r_1, r_2 \in R$ such that $p_1(x_1)r = r_1p_1(x_1) + r_2$. By defining $\sigma'_1(r) = r_1$ and $\delta'_1(r) = r_2$, it is easy to see that $(\sigma_1', \delta_1')$ is a quasi-derivation on $R$, whence $p_1(x_1)$ is a cv-polynomial respect to $(\sigma'_1, \delta'_1)$.

\item To prove that the last statement of the theorem is true, consider Part (4). We have $p_1(x_1)R \subseteq Rp_1(x_1) + R$, and so
\[
(up_1(x_1) + c) R \subseteq u(Rp_1(x_1) + R) + cR
    \subseteq R(up_1(x_1) + c) + R,
\]

since this is also a cv-polynomial.
\end{itemize}
\end{proof}

\begin{example}
Consider again Example \ref{exampleteo216}. For any $r \in R$ and $j = 1, 2, \dotsc, n$, Theorem \ref{LamLeroy1992Theorem2.16corrected}(2) asserts that $\sum\limits_{i=j}^{n} a_if_{j}^{i}(r) = a_j\sigma^j(a)$, and hence
\begin{align*}
\sum\limits_{i=1}^{2} a_if_{1}^{i}(r) = &\ a_1f_{1}^{1}(r) +  a_2f_{1}^{2}(r) = a_1\sigma(r) + a_2\sigma\delta(r) + a_2\delta\sigma(r) = a_1\sigma(a),\quad j = 1, \\
\sum\limits_{i=2}^{2} a_if_{2}^{i}(r) =  &\ a_2f_{2}^{2}(r) = a_2\sigma^2(r),\quad j = 2,
\end{align*}

while
\begin{align*}
\sum\limits_{i=1}^{2} a_if_{1}^{i}(r) = a_1\sigma(r) = a_1\sigma(r)\quad {\rm and}\quad \sum\limits_{i=2}^{2} a_if_{2}^{i}(r) = a_2\sigma^2(r) = a_2\sigma^2(r).
\end{align*}

By Theorem \ref{LamLeroy1992Theorem2.16corrected}(3), $[a_1\ \ 0]$ and $[0 \ \ a_2]$ are left eigenvectors for each $r \in R$ of the matrix $M_2(r)$ with eigenvalues $a_1\sigma(r)a_1^{-1}$ and $a_2\sigma(r)^2a_2^{-1}$, respectively, so that
    \begin{align*}
        [a_1 \ \ 0] \begin{bmatrix}f_1^1(r) & f_2^1(r) \\ f_1^2(r) & f_2^2(r)\end{bmatrix} &= a_1\sigma(r)a_1^{-1}[a_1 \ \ 0]\\
        [a_1 \ \ 0] \begin{bmatrix} \sigma(r) & 0 \\ \sigma\delta(r) + \delta\sigma(r) & \sigma^2(r) \end{bmatrix} &= a_1\sigma(r)a_1^{-1}[a_1 \ \ 0]\\
        [a_1\sigma(r)  \ \ 0] &= [a_1\sigma(r)a_1^{-1}a_1 \ \ 0] \\
        [a_1\sigma(r)  \ \ 0] &= [a_1\sigma(r) \ \ 0].
    \end{align*}
    
In the same way,
    \begin{align*}
        [0 \ \ a_2] \begin{bmatrix}f_1^1(r) & f_2^1(r) \\ f_1^2(r) & f_2^2(r)\end{bmatrix} &= a_2\sigma(r)^2a_2^{-1}[0 \ \ a_2]\\
        [0 \ \ a_2] \begin{bmatrix} \sigma(r) & 0 \\ \sigma\delta(r) + \delta\sigma(r) & \sigma^2(r) \end{bmatrix} &= a_2\sigma(r)^2a_2^{-1}[0 \ \ a_2]\\
        [a_2\sigma\delta(r) + \delta\sigma(r) \ \ a_2\sigma^2(r)] &= [0 \ \ a_2\sigma(r)^2a_2^{-1}a_2] \\
        [0 \ \ a_2\sigma^2(r)] &= [0 \ \ a_2\sigma^2(r)],
    \end{align*}
    
which shows that Theorem \ref{LamLeroy1992Theorem2.16corrected}(4) holds.
\end{example}

\section{Three-step iterated Ore extensions}\label{Homon=3}

In this section, we consider the question on homomorphisms and cv-polynomials of three-step iterated Ore extensions. We follow the ideas presented in Section \ref{Homon=2}.

\begin{definition}\label{definitionhomooreiteretared3}
Let two iterated Ore extensions with three indeterminates given by $S' = R[x'_1;\sigma'_1, \delta'_1][x'_2;\sigma'_2, \delta'_2][x'_3;\sigma'_3, \delta'_3]$ and $S = R[x_1;\sigma_1, \delta_1][x_2;\sigma_2, \delta_2][x_3;\sigma_3, \delta_3]$. The homomorphism $\phi:S' \to S$ defined as $\phi(x'_1):= p_1(x_1) = p_1 \in R[x_1;\sigma_1, \delta_1]$, $\phi(x'_2):= p_2(x_1, x_2) = p_2 \in R[x_1;\sigma_1, \delta_1][x_2; \sigma_2, \delta_2]$ and $\phi(x'_3):= p_3(x_1, x_2, x_3) = p_3 \in S$, satisfies
\begin{align*}
\phi(x'_3x'_2) = &\ \sigma'_3(\phi(x'_2))p_3 + \delta'_3(\phi(x'_2)), \\
\phi(x'_3x'_1) = &\ \sigma'_3(\phi(x'_1))p_3 + \delta'_3(\phi(x'_1)), \\
\phi(x'_2x'_1) = &\ \sigma'_2(\phi(x'_1))p_2 + \delta'_2(\phi(x'_1)), \\
\phi(x'_1r) = &\ \sigma'_1(r)p_1 + \delta'_1(r), \\
\phi(x'_2r) = &\ \sigma'_2(r)p_2 + \delta'_2(r), \\ 
\phi(x'_3r) = &\ \sigma'_3(r)p_3 + \delta'_3(r), 
\end{align*}

for $r \in R$, or equivalently,
\begin{align*}
    p_3p_2 = &\ \sigma'_3(p_2)p_3 + \delta'_3(p_2), \\
    p_3p_1 = &\ \sigma'_3(p_1)p_3 + \delta'_3(p_1), \\
    p_2p_1 = &\ \sigma'_2(p_1)p_2 + \delta'_2(p_1), \\
    p_1r = &\ \sigma'_1(r)p_1 + \delta'_1(r), \\
    p_2r = &\ \sigma'_2(r)p_2 + \delta'_2(r), \\
    p_3r = &\ \sigma'_3(r)p_3 + \delta'_3(r).
\end{align*}

The polynomials $p_i$'s are the {\em cv-polynomials for iterated Ore extensions respect to the quasi-derivations} $(\sigma'_i, \delta'_i)$ for $i=1,2,3$, respectively.

Conversely, if a polynomial $p_i$ for $i =1,2,3$  satisfies the above conditions for quasi-derivations in the sense above, then we can define $\phi:S' \to S$ by $\phi(x'_i) = p_i$ with $i = 1,2,3$.
\end{definition}

\begin{example}
    Let $\phi:S' \to S$ be the homomorphism as in Definition \ref{definitionhomooreiteretared3}. We define 
    \begin{align*}
        \phi(x'_1) = &\ p_1(x_1) = a_1x_1 + a_0, \\
        \phi(x'_2) = &\ p_2(x_1, x_2) = b_{11}x_1x_2 + b_{00}, \\
        \phi(x'_3) = &\ p_3(x_1,x_2,x_3) = c_{111}x_1x_2x_3 + c_{000}.
    \end{align*}
    
Let us see that these are cv-polynomials respect to the quasi-derivations $(\sigma'_i, \delta'_i)$, $i = 1, 2, 3$. From Example \ref{p1degreeoneandp2arbitrary}, with $p_2 = f(x_1)x_2 + b_{00}$ where $f(x_1) = b_{11}x_1$, we know that $\sigma'_1 = a_1\sigma_1a_1^{-1}$, $\delta'_1 = a_1\delta_1 +\delta_{1(a_0, \sigma'_1)}$, $\delta'_2 = f(x_1)\delta_2  + \delta_{2(b_{00}, \sigma'_2)}$ and $f(x_1)\sigma_2(-) = \sigma'_2(-)f(x_1)$.\\
    
Let us show that $p_3p_1 = \sigma'_3(p_1)p_3 + \delta'_3(p_1)$. Consider $p_3  = q(x_1, x_2)x_3 +c_{000}$ with $q(x_1, x_2) = c_{111}x_1x_2$. Then
\begin{align}\label{p3p1example1n=3}
    p_3p_1&= (c_{111}x_1x_2x_3 + c_{000})(a_1x_1 + a_0)\notag \\
    &= c_{111}x_1x_2x_3a_1x_1 + c_{111}x_1x_2x_3a_0 + c_{000}a_1x_1 + c_{000}a_0\notag\\
    &= c_{111}x_1x_2\sigma_3(a_1x_1)x_3 + c_{111}x_1x_2\delta_3(a_1x_1) + c_{111}x_1x_2\sigma_3(a_0)x_3\notag\\
    &\ \ \ + c_{111}x_1x_2\delta_3(a_0) + c_{000}a_1x_1 + c_{000}a_0\notag\\
    &= q(x_1,x_2)\sigma_3(a_1x_1)x_3 + q(x_1,x_2)\delta_3(a_1x_1) + q(x_1,x_2)\sigma_3(a_0)x_3\notag \\
    &\ \ \ + q(x_1,x_2)\delta_3(a_0) + c_{000}a_1x_1 + c_{000}a_0.
\end{align}

Notice that
\begin{align}\label{sigma3p3p1example1n=3}
    \sigma'_3(p_1)p_3 + \delta'_3(p_1) &= \sigma'_3(a_1x_1 + a_0)(c_{111}x_1x_2x_3 + c_{000}) + \delta'_3(a_1x_1 + a_0) \notag \\
    &= \sigma'_3(a_1x_1)c_{111}x_1x_2x_3 + \sigma'_3(a_1x_1)c_{000} + \sigma'_3(a_0)c_{111}x_1x_2x_3 \notag\\
    &\ \ \ + \sigma'_3(a_0)c_{000} + \delta'_3(a_1x_1) + \delta'_3(a_0)\notag\\
    &= \sigma'_3(a_1x_1)q(x_1,x_2)x_3 + \sigma'_3(a_1x_1)c_{000} + \sigma'_3(a_0)q(x_1,x_2)x_3 \notag\\
    &\ \ \ + \sigma'_3(a_0)c_{000} + \delta'_3(a_1x_1) + \delta'_3(a_0).
\end{align}

With the aim of that conditions (\ref{p3p1example1n=3}) and (\ref{sigma3p3p1example1n=3}) coincide, let $\delta'_3 := q(x_1,x_2)\delta_3  + \delta_{3(c_{000}, \sigma'_3)}$ and assume that the condition $q(x_1,x_2)\sigma_3(-) = \sigma'_3(-)q(x_1,x_2)$ holds. For the equality $p_3p_2 = \sigma'_3(p_2)p_3 + \delta'_3(p_2)$, we get
\begin{align*}
    p_3p_2 &= (c_{111}x_1x_2x_3 + c_{000})(b_{11}x_1x_2 + b_{00})\\
    &= c_{111}x_1x_2\sigma_3(b_{11}x_1x_2)x_3 + c_{111}x_1x_2\delta_3(b_{11}x_1x_2)+ c_{111}x_1x_2\sigma_3(b_{00})x_3\\
    &\ \ \ + c_{111}x_1x_2\delta_3(b_{00})+ c_{000}b_{11}x_1x_2 + c_{000}b_{00}\\
     &= q(x_1,x_2)\sigma_3(b_{11}x_1x_2)x_3 + q(x_1,x_2)\delta_3(b_{11}x_1x_2)+ q(x_1,x_2)\sigma_3(b_{00})x_3\\
    &\ \ \ + q(x_1,x_2)\delta_3(b_{00})+ c_{000}b_{11}x_1x_2 + c_{000}b_{00}, 
\end{align*}

and
\begin{align*}
    \sigma'_3(p_2)p_3 + \delta'_3(p_2) &= \sigma'_3(b_{11}x_1x_2)c_{111}x_1x_2x_3 + \sigma'_3(b_{11}x_1x_2)c_{000}\\
    &\ \ \ + \sigma'_3(b_{00})c_{111}x_1x_2x_3 +\sigma'_3(b_{00})c_{000} + \delta'_3(b_{11}x_1x_2) + \delta'_3(b_{00})\\
    &= \sigma'_3(b_{11}x_1x_2)q(x_1,x_2)x_3 + \sigma'_3(b_{11}x_1x_2)c_{000}\\
    &\ \ \ + \sigma'_3(b_{00})q(x_1,x_2)x_3 +\sigma'_3(b_{00})c_{000} + \delta'_3(b_{11}x_1x_2) + \delta'_3(b_{00}).
\end{align*}

It is straightforward to see that by using $\delta'_3$ and the condition assumed above, we get the desired equality.
\end{example}

As in the case of three-step iterated Ore extensions, a polynomial $p(x_1,x_2, x_3) \in S = R[x_1; \sigma_1, \delta_1][x_2;\sigma_2, \delta_2][x_3;\sigma_3, \delta_3]$ is said to be {\it right invariant} if $$p(x_1,x_2,x_3)R[x_1; \sigma_1, \delta_1][x_2; \sigma_2, \delta_2] \subseteq R[x_1; \sigma_1, \delta_1][x_2; \sigma_2, \delta_2]p(x_1,x_2,x_3),$$ and {\it right semi-invariant} if $p(x_1,x_2,x_3)R \subseteq R p(x_1,x_2,x_3)$.

The following result is the natural extension of Theorem \ref{theoreminvariann=2}.
\begin{theorem}\label{theoreminvariann=3}
    Let $x_3^n$ be a monomial invariant of degree $n \geq 1$. Let $p_3(x_1,x_2,x_3)$ with $p_3(x_1, x_2,x_3) = f(x_1,x_2)x_3^n + g(x_1,x_2,x_3)$ where the degree of $g(x_1,x_2,x_3)$ is less than or equal to $n$. Then:
\begin{itemize}
\item [{\rm (1)}] $p_3$ is a cv-polynomial respect to $(\sigma'_3, \delta'_3)$ if and only if $g(x_1,x_2,x_3)$ is a cv-polynomial respect to $(\sigma'_3, \delta'_3)$ and $f(x_1,x_2)\sigma_3^n(-) = \sigma'_3(-)f(x_1,x_2)$.
\item[{\rm (2)}] If $p_3$ is a left multiple of $f(x_1,x_2)$, then $p_3$ is a cv-polynomial if and only if is invariant. 
    \end{itemize}
\end{theorem}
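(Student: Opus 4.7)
The plan is to mirror the proof of Theorem \ref{theoreminvariann=2}, but now I must handle the two defining relations of a cv-polynomial in three indeterminates simultaneously: the one involving $p_2$ and the one involving $p_1$. The key feature that makes the argument go through is the same as before: the invariance of the monomial $x_3^n$ means that $x_3^n r = \sigma_3^n(r) x_3^n$ for every $r$ in the coefficient ring $R[x_1;\sigma_1,\delta_1][x_2;\sigma_2,\delta_2]$, so no lower-order terms in $x_3$ are produced when pushing $x_3^n$ past such $r$.

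For part (1), I would first assume that $p_3 = f(x_1,x_2) x_3^n + g(x_1,x_2,x_3)$ is a cv-polynomial with respect to $(\sigma'_3,\delta'_3)$, and substitute this decomposition into $p_3 p_2 = \sigma'_3(p_2) p_3 + \delta'_3(p_2)$. The invariance of $x_3^n$ makes the left-hand side equal to $f(x_1,x_2) \sigma_3^n(p_2) x_3^n + g(x_1,x_2,x_3) p_2$, while the right-hand side expands to $\sigma'_3(p_2) f(x_1,x_2) x_3^n + \sigma'_3(p_2) g(x_1,x_2,x_3) + \delta'_3(p_2)$. Comparing the strictly leading part in $x_3$ (the coefficient of $x_3^n$ contributed only by the factors involving $f(x_1,x_2)x_3^n$) I would obtain
\[
f(x_1,x_2)\sigma_3^n(p_2) = \sigma'_3(p_2) f(x_1,x_2),
\]
and the remaining equation is precisely the cv-polynomial relation $g(x_1,x_2,x_3) p_2 = \sigma'_3(p_2) g(x_1,x_2,x_3) + \delta'_3(p_2)$. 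Repeating the same expansion with $p_3 p_1 = \sigma'_3(p_1) p_3 + \delta'_3(p_1)$ yields the analogous statements with $p_1$ replacing $p_2$. For the converse, assuming both that $g$ is a cv-polynomial with respect to $(\sigma'_3,\delta'_3)$ and that $f(x_1,x_2)\sigma_3^n(-) = \sigma'_3(-) f(x_1,x_2)$, a direct expansion that again invokes the invariance of $x_3^n$ would recover the two cv-polynomial relations for $p_3$.

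For part (2), I would argue that if $p_3$ is a left multiple of $f(x_1,x_2)$, then the decomposition forces $g(x_1,x_2,x_3)=0$, exactly as in the proof of Theorem \ref{theoreminvariann=2}(2); hence $p_3 = f(x_1,x_2) x_3^n$. If in addition $p_3$ is a cv-polynomial, part (1) gives $\delta'_3(p_i) = 0$ and $f(x_1,x_2)\sigma_3^n(-) = \sigma'_3(-) f(x_1,x_2)$, and therefore $p_3 p_i = f(x_1,x_2) \sigma_3^n(p_i) x_3^n$ for $i = 1, 2$, which exhibits $p_3$ as right invariant with respect to the coefficient ring $R[x_1;\sigma_1,\delta_1][x_2;\sigma_2,\delta_2]$. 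Conversely, if $p_3 = f(x_1,x_2) x_3^n$ is invariant, then the existence of an endomorphism $\sigma'_3$ satisfying $f(x_1,x_2)\sigma_3^n(p_i) = \sigma'_3(p_i) f(x_1,x_2)$ for $i = 1, 2$ together with the choice $\delta'_3 = 0$ turns $p_3$ into a cv-polynomial.

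The main obstacle I foresee is not computational but bookkeeping: the cv-polynomial condition in the three-step setting demands compatibility of the single quasi-derivation $(\sigma'_3, \delta'_3)$ with both $p_1$ and $p_2$ at once, so I will have to confirm that the choice forced by the $p_2$-equation is also consistent with the $p_1$-equation. Because both $p_1$ and $p_2$ live in the coefficient ring of $x_3$, and because the invariance of $x_3^n$ gives the same clean commutation rule $x_3^n r = \sigma_3^n(r) x_3^n$ for both, a single $\sigma'_3$ on this coefficient ring unifies the two requirements, and the proof reduces cleanly to the same leading-coefficient comparison in $x_3$ used in Theorem \ref{theoreminvariann=2}.
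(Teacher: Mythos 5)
Your proposal follows essentially the same route as the paper's proof: decompose $p_3$ as $f(x_1,x_2)x_3^n + g(x_1,x_2,x_3)$, use the invariance of $x_3^n$ to rewrite $p_3p_2$ as $f(x_1,x_2)\sigma_3^n(p_2)x_3^n + g(x_1,x_2,x_3)p_2$, and compare with the expansion of $\sigma'_3(p_2)p_3 + \delta'_3(p_2)$ to isolate the intertwining condition on $f$ and the cv-relation on $g$, with part (2) obtained by setting $g=0$ exactly as you describe. Your extra care in checking the $p_1$-relation alongside the $p_2$-relation is a point the paper's proof leaves implicit (it only writes out the $p_2$ case), but it does not change the argument.
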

\begin{proof}
The ideas are completely analogous to the proof of Theorem \ref{theoreminvariann=2}. 
\begin{enumerate}
\item [\rm (1)] First, assume that $p_3$ is a cv-polynomial respect to the quasi-derivation $(\sigma'_3, \delta'_3)$. Then $p_3p_2 = \sigma'_3(p_2)p_3 + \delta'_3(p_2)$, and
\begin{align} \label{invariantpart1n=3}
            p_3p_2
            &= \sigma'_3(p_2)[f(x_1,x_2)x_3^n + g(x_1,x_2,x_3)] + \delta'_3(p_2) \notag\\
            &= \sigma'_3(p_2)f(x_1,x_2)x_3^n + [\sigma'_3(p_2)g(x_1,x_2,x_3) + \delta'_3(p_2)]
\end{align}

Now,
\begin{align}\label{invariantpart2n=3}
        p_3p_2 &= [f(x_1,x_2)x_3^n + g(x_1,x_2,x_3)]p_2 = f(x_1,x_2)x_3^np_2 + g(x_1,x_2,x_3)p_2 \notag\\
        &= f(x_1,x_2)\sigma_3^n(p_2)x_3^n + g(x_1,x_2,x_3)p_2,
\end{align}
    
since $x_3^n$ is a monomial invariant. From (\ref{invariantpart1n=3}) and (\ref{invariantpart2n=3}) it follows that $f(x_1,x_2)\sigma^n_3(-) = \sigma'_3(-)f(x_1,x_2)$, and
\begin{equation}\label{equationgn=3}
   g(x_1, x_2,x_3)p_2 = \sigma'_3(p_2)g(x_1,x_2,x_3) + \delta'_3(p_2). 
\end{equation}

Conversely, if $g(x_1,x_2,x_3)$ is a cv-polynomial respect to $(\sigma'_3, \delta'_3)$, then $g(x_1, x_2,x_3)p_2 = \sigma'_3(p_2)g(x_1,x_2,x_3) + \delta'_3(p_2)$, and having in mind that $f(x_1,x_2)\sigma_3^n(-) = \sigma'_3(-)f(x_1,x_2)$, 
\begin{align*}
    p_3p_2 &= f(x_1,x_2)x_3^np_2 + g(x_1,x_2,x_3)p_2\\
    &= f(x_1,x_2)\sigma_3^n(p_2)x_3^n + g(x_1,x_2,x_3)p_2\\
    &= \sigma'_3(p_2)f(x_1,x_2)x_3^n + \sigma'_3(p_2)g(x_1,x_2,x_3) + \delta'_3(p_2)\\
    &= \sigma'_3(p_2)[f(x_1,x_2)x_3^n + g(x_1,x_2,x_3)] + \delta'_3(p_2) = \sigma'_3(p_2)p_3 + \delta'_3(p_2).
\end{align*}

\item[\rm (2)] If $p_3$ is a cv-polynomial respect to the quasi-derivation $(\sigma'_3, \delta'_3)$, then $p_3p_2 = \sigma'_3(p_2)p_3 + \delta'_3(p_2)$. Since $p_3$ is a left multiple of $f(x_1,x_2)$, we get $g(x_1,x_2,x_3) = 0$. Due to the equation (\ref{equationgn=3}), $\delta'_3(p_2) = 0$, and besides $f(x_1,x_2)\sigma_3^n(-) = \sigma'_3(-)f(x_1,x_2)$, whence
\[
p_3p_2 = \sigma'_3(p_2)f(x_1,x_2)x_3^n = f(x_1,x_2)\sigma_3^n(p_2)x_3^n,
\]
which shows that $p_3$ is invariant. 

Now, assume that $p_3$ is invariant. By assumption, $g(x_1,x_2,x_3) = 0$, and so $p_3p_2 = f(x_1,x_2)x_3^np_2 = f(x_1,x_2)\sigma_3^n(p_2)x_3^n$. If there exist $\sigma'_3$ such that $f(x_1,x_2)\sigma^n_3(p_2) = \sigma'_3(p_2)f(x_1,x_2)$, then $p_3$ is a cv-polynomial respect to $(\sigma'_3, \delta'_3 = 0)$.
\end{enumerate}
\end{proof}

As above, if $(\sigma_3, \delta_3)$ is a quasi-derivation on a ring $R$, $\delta_3$ is said to be {\it algebraic} if there exists a non-zero polynomial  $g \in S = R[x_1;\sigma_1, \delta_1][x_2;\sigma_2,\delta_2][x_3;\sigma_3,\delta_3]$ such that $g(x_1,x_2, \delta_3)=0$. The evaluation of $g(x_1, x_2,x_3)=\sum_{i=0}^n q_i(x_1,x_2)x_3^i$ at $\delta_3$ is defined to be the operator  $g(x_1, x_2, \delta_3) =\sum_{i=0}^n q_i(x_1,x_2)\delta_3^i$ on $S$.

\begin{theorem} \label{teo2.12_paran=3}
Consider Ore extensions $S'= R[x'_1;\sigma'_1, \delta'_1][x'_2;\sigma'_2, \delta'_2][x'_3;\sigma'_3, \delta'_3]$ and $S = R[x_1;\sigma_1, \delta_1][x_2;\sigma_2, \delta_2][x_3;\sigma_3, \delta_3]$. Let $p_1, p_2, p_3$ be elements belonging to $S$ given by $p_1(x_1) = \sum_{i = 0}^k a_{i}x_1^{i}$, $p_2(x_1,x_2) = \sum_{i = 0}^n \sum_{j = 0}^m b_{ij}x_1^{i}x_2^{j}$ and $p_3(x_1,x_2,x_3) = \sum_{i = 0}^t \sum_{j = 0}^s \sum_{l = 0}^w c_{ijl}x_1^{i}x_2^{j}x_3^{l}$. 
\begin{itemize}
    \item [\rm (1)] If $p_3$ is a cv-polynomial respect to the quasi-derivation $(\sigma'_3, \delta'_3)$, then $\delta'_3 = (p_3 - C_{0})(\delta_3) + \delta_{3(C_{0}, \sigma'_3)}$, where $C_{0} = \sum_{i = 0}^t\sum_{j = 0}^sc_{ij0}x_1^{i}x_2^{j}x_3^{0}$, and if also $w \geq 1$, then $c_{ijl}x_1^ix_2^j\sigma_3^l(-) = \sigma'_3(-)c_{ijl}x_1^ix_2^j$ and $\sigma_3\delta_3 = -\delta_3\sigma_3$.
\item [\rm (2)] If $\delta_3$ is not an algebraic derivation, then $p_3$ is a cv-polynomial respect to $(\sigma'_3, \delta'_3)$ if and only if $\delta'_3 = (p_3 - C_{0})(\delta_3) + \delta_{3(C_{0}, \sigma'_3)}$, where the element $C_0$ is given by $\sum_{i = 0}^t\sum_{j = 0}^sc_{ij0}x_1^{i}x_2^{j}x_3^{0}$.
\end{itemize}
\end{theorem}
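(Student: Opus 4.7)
My plan is to follow the strategy used in the proof of Theorem \ref{teo2.12_paran=2}, but now lifting the evaluation homomorphism one level. Define
\[
\lambda:S\longrightarrow \mathrm{End}\bigl(R[x_1;\sigma_1,\delta_1][x_2;\sigma_2,\delta_2],+\bigr)
\]
by $\lambda(x_1)=x_1$ (left multiplication), $\lambda(x_2)=x_2$, $\lambda(x_3)=\delta_3$ and $\lambda(r)=r\cdot$ for $r\in R$. The point is that the commutation rule $x_3h=\sigma_3(h)x_3+\delta_3(h)$ for $h\in R[x_1;\sigma_1,\delta_1][x_2;\sigma_2,\delta_2]$ is precisely what makes $\lambda$ a ring homomorphism, and non\-algebraicity of $\delta_3$ is exactly the statement that $\lambda$ is injective.

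For part (1), I assume $p_3$ is a cv-polynomial respect to $(\sigma'_3,\delta'_3)$, so $p_3p_2=\sigma'_3(p_2)p_3+\delta'_3(p_2)$. Applying $\lambda$ and evaluating the resulting operator at the element $1$, only the term $C_0=\sum_{i,j}c_{ij0}x_1^ix_2^j$ of $p_3(x_1,x_2,\delta_3)$ survives at the constant level because $\delta_3^l(1)=0$ for $l\ge 1$. This yields
\[
p_3(x_1,x_2,\delta_3)(p_2)\;=\;\sigma'_3(p_2)C_0+\delta'_3(p_2),
\]
and rewriting $\sigma'_3(p_2)C_0=C_0p_2-\delta_{3(C_0,\sigma'_3)}(p_2)$ gives the formula $\delta'_3=(p_3-C_0)(\delta_3)+\delta_{3(C_0,\sigma'_3)}$. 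For the second assertion ($w\ge 1$), I rewrite $p_3p_2-\sigma'_3(p_2)p_3$ by expanding each $x_3^l p_2$ via the usual $f^l_k$ formula and compare with $\delta'_3(p_2)$, which has $x_3$-degree $0$ by the first part. Matching the $x_3^w$ coefficient forces $c_{ijl}x_1^ix_2^j\sigma_3^l(-)=\sigma'_3(-)c_{ijl}x_1^ix_2^j$ for each nonzero $c_{ijl}$ with $l\ge 1$; the surviving mixed monomials in $\sigma_3$ and $\delta_3$ of degrees $0<k<l$ must also vanish, and as in Theorem \ref{Theoremaversion2} this is equivalent to the anticommutation $\sigma_3\delta_3=-\delta_3\sigma_3$.

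For part (2), one direction is part (1). For the converse, assume $\delta'_3=(p_3-C_0)(\delta_3)+\delta_{3(C_0,\sigma'_3)}$ and set $q(x_1,x_2,x_3):=p_3-C_0$. Then $q(x_1,x_2,\delta_3)=\delta'_3-\delta_{3(C_0,\sigma'_3)}$ is a $\sigma'_3$-derivation of $R[x_1;\sigma_1,\delta_1][x_2;\sigma_2,\delta_2]$, so for any $h$ in that ring
\[
q(x_1,x_2,\delta_3)\,\lambda(h)\;=\;\lambda(\sigma'_3(h))\,q(x_1,x_2,\delta_3)+\lambda\bigl(q(x_1,x_2,\delta_3)(h)\bigr),
\]
which is an identity in the image of $\lambda$. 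Because $\delta_3$ is not algebraic, $\lambda$ is injective, so this identity lifts to $q(x_1,x_2,x_3)\,h=\sigma'_3(h)\,q(x_1,x_2,x_3)+q(x_1,x_2,\delta_3)(h)$ in $S$. Taking $h=p_2$, substituting $q=p_3-C_0$ and $q(x_1,x_2,\delta_3)=\delta'_3-\delta_{3(C_0,\sigma'_3)}$, and cancelling the $C_0p_2$ terms on both sides recovers $p_3p_2=\sigma'_3(p_2)p_3+\delta'_3(p_2)$, which is condition (\ref{conditiontwoOre3}) at the level of $x_3$.

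The main obstacle I expect is the bookkeeping in the $w\ge 1$ argument of part (1): in the three-variable setting the \textquotedblleft coefficients\textquotedblright\ $c_{ijl}x_1^ix_2^j$ of $x_3^l$ already live in a noncommutative ring, so one must be careful that commuting them past $\sigma'_3$ and $\sigma_3$ is done correctly and that the mixed $\sigma_3/\delta_3$ terms coming from the expansion $x_3^l p_2=\sum_k f^l_k(p_2)x_3^k$ are assembled cleanly before invoking the anticommutation argument. Everything else reduces to an almost verbatim repetition of the proofs of Theorems \ref{Theoremaversion2} and \ref{teo2.12_paran=2}, now with $(\sigma_3,\delta_3)$ acting on the two-step iterated Ore extension below it.
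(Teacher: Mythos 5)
Your proposal is correct and follows essentially the same route as the paper's proof: the same evaluation homomorphism $\lambda$ sending $x_3\mapsto\delta_3$, evaluation at $1$ to isolate $C_0$, coefficient comparison in $x_3^w$ for the $w\ge 1$ claims, and the lifting of the $\sigma'_3$-derivation identity through the injectivity of $\lambda$ for part (2). The only cosmetic difference is that you apply the cv-condition to $p_2$ where the paper applies it to $p_1$; both are instances of the same defining relations and the argument is unchanged.
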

\begin{proof}
The proof follows the same ideas established in Theorem \ref{teo2.12_paran=2}.
\begin{itemize}
    \item [\rm (1)] Consider the homomorphism $\lambda: S \to \text{End}(R[x_1;\sigma_1,\delta_1][x_2;\sigma_2,\delta_2], +)$ given by $\lambda(x_1) = x_1$, $\lambda(x_2) = x_2$, $\lambda(x_3) = \delta_3$ and for any $r \in R$, $\lambda(r)$ is defined as the left multiplication by $r$ on $R$. Assume that  $p_3$ is a cv-polynomial respect to $(\sigma'_3, \delta'_3)$. Then
\begin{align}\label{equ1.2n=3}
     p_3p_1 &=\phi(x'_3x'_1) = \sigma'_3(p_1)p_3 + \delta'_3(p_1),\quad {\rm and}\\
     p_3p_2 &= \phi(x'_3x'_2) = \sigma'_3(p_2)p_3 + \delta'_3(p_2). \notag
\end{align}
By evaluating the equation (\ref{equ1.2n=3}) respect to the homomorphism $\lambda$, we get
\begin{equation}\label{equation1n=3}
    p_3(x_1,x_2,\delta_3)p_1(x_1)  = \sigma'_3(p_1(x_1))p_3(x_1,x_2, \delta_3) + \delta'_3(p_1(x_1)).
\end{equation}

By evaluating at element 1 for $\delta_3$, since $\delta^j_3(1) = 0$, for $j \geq 1$, we have
\[
p_3(x_1, x_2, \delta_3)p_1(x_1)  = \sigma'_3(p_1(x_1)) \sum_{i = 0}^t\sum_{j = 0}^s c_{ij0}x_1^{i}x_2^{j}x_3^{0}+ \delta'_3(p_1(x_1)).
\]

If $ C_{0} = \sum_{i = 0}^t\sum_{j = 0}^s c_{ij0}x_1^{i}x_2^{j}x_3^{0}$, then
\begin{align*}
   \delta'_3(p_1(x_1)) &=  p_3(x_1,x_2,\delta_3)p_1(x_1) - \sigma'_3(p_1(x_1)) C_{0}\\
   &= p_3(x_1, x_2, \delta_3)p_1(x_1) -C_{0}p_1(x_1) + C_{0}p_1(x_1)- \sigma'_3(p_1(x_1))C_{0}\\
   &= (p_3(x_1,x_2,\delta_3) - C_{0})p_1(x_1) + \delta_{3(C_{0}, \sigma'_3)}(p_1(x_1))\\
   &= (p_2(x_1,x_2,x_3) - C_{0})(\delta_3)(p_1(x_1)) + \delta_{3(C_{0}, \sigma'_3)}(p_1(x_1)),
\end{align*}

whence $\delta'_3 = (p_3 - C_{0})(\delta_3) + \delta_{3(C_{0}, \sigma'_3)}$.

For the next part, consider $w \geq 1$. Since $p_3$ is a cv-polynomial, $p_3p_1 - \sigma'_3(p_1)p_3 = \delta'_3(p_1)$, and so 
    \begin{align}\label{equa212.2n=3}
       &\left(\sum_{i = 0}^t\sum_{j = 0}^s\sum_{l = 0}^w c_{ijl}x_1^{i}x_2^{j}x_3^l\right) \left(\sum_{i = 0}^k a_{i}x_1^{i}\right) - \sigma'_3\left(\sum_{i = 0}^k a_{i}x_1^{i}\right)\sum_{i = 0}^t\sum_{j = 0}^s\sum_{l = 0}^w c_{ijl}x_1^{i}x_2^{j}x_3^l\\
       &= [c_{tsw}x_1^tx_2^s\sigma_3^w(a_kx_1^k) - \sigma'_3(a_kx_1^k)c_{tsw}x_1^tx_2^s]x_3^w +\cdots -  \sigma'_3(p_1)C_{0}\notag\\
       &\ \ + p_{\sigma_3,\delta_3}+  \sum_{i = 0}^t\sum_{j = 0}^t\sum_{l = 0}^w c_{ijl}x_1^{i}x_2^j\delta_3^l(p_1) + C_{0}p_1 \notag\\
       &= \delta'_3(p_1), \notag
    \end{align}
    
where $p_{\sigma_3,\delta_3}$ are the possible combinations between $\sigma_3$ and $\delta_3$. Since (\ref{equa212.2n=3}) is equal to 
    \begin{align*}
        \delta'_3(p_1) &= (p_3 - C_{0})(\delta_3)(p_1) + \delta_{3(C_{0}, \sigma'_3)}(p_1)\\
        &=  \sum_{i = 0}^t\sum_{j = 0}^s\sum_{l = 0}^{w} c_{ijl}x_1^{i}x_2^{j}\delta_3^l(p_1) + C_{0}p_1 - \sigma'_3(p_1)C_{0},
    \end{align*}

we obtain
\[
[c_{tsw}x_1^tx_2^s\sigma_3^w(a_kx_1^k) - \sigma'_3(a_kx_1^k)c_{tsw}x_1^tx_2^s]x_3^w +\cdots + p_{\sigma_3,\delta_3} = 0,
\]
and hence, $c_{ijl}x_1^ix_2^j\sigma_3^l(-) = \sigma'_3(-)c_{ijl}x_1^ix_2^j$  for $1\leq i \leq t$, $1\leq j \leq s$, $1 \leq l \leq w$, and $p_{\sigma_3, \delta_3} = 0$, and so $\sigma_3\delta_3 = -\delta_3\sigma_3$ as desired.

\item [\rm (2)] Let us assume that $\delta_3$ is not an algebraic derivation and let $q(x_1, x_2,x_3) = p_2(x_1, x_2, x_3) - C_{0}$. Then $\delta'_3 = q(x_1, x_2, \delta_3)+ \delta_{3(C_{0}, \sigma'_3)}$, and $q(x_1, x_2, \delta_3) = \delta'_3 - \delta_{3(C_{0}, \sigma'_3)}$ is a $\sigma'_3$-derivation, so for any $r \in R$,
\[
q(x_1,x_2, \delta_3)(p_1(x_1)r) = \sigma'_3(p_1(x_1))q(x_1, x_2, \delta_3)(r) + q(x_1, x_2, \delta_3)(p_1(x_1))r.
\]
If we evaluate by the homomorphism $\lambda$, then 
\[
q(x_1, x_2, \delta_3)\lambda(p_1(x_1)) = \lambda(\sigma'_3(p_1(x_1)))q(x_1,x_2,\delta_3) + \lambda(q(x_1, x_2, \delta_3)(p_1(x_1)))
\]

which holds in the image of $\lambda$. Since $\delta_3$ is not algebraic, $\lambda$ is injective and there exists $\lambda^{-1}$, whence
\[
q(x_1, x_2, x_3)(p_1(x_1)) = \sigma'_3(p_1(x_1))q(x_1,x_2,x_3) + q(x_1, x_2, \delta_3)(p_1(x_1)).
\]
By replacing $q(x_1, x_2, x_3)$ and $q(x_1, x_3,\delta_3)$, 
\begin{align*}
    (p_3(x_1, x_2, x_3) - C_{0})p_1(x_1) &= \sigma'_3(p_1(x_1))(p_3(x_1, x_2,x_3) - C_{0})\\
    &\ \ + (\delta'_3 - \delta_{3(C_{0}, \sigma'_3)})(p_1(x_1))\\
    &= \sigma'_3(p_1(x_1))p_3(x_1, x_2,x_3) - \sigma'_3(p_1(x_1))C_{0} \\
    &\ \ + \delta'_3(p_1(x_1))- \delta_{3(C_{0}, \sigma'_3)} (p_1(x_1))\\
    &= \sigma'_3(p_1(x_1))p_3(x_1, x_2, x_3) - \sigma'_3(p_1(x_1))C_{0}\\
    &\ \ + \delta'_3(p_1(x_1))- C_{0}p_1(x_1) + \sigma'_3(p_1(x_1))C_{0}\\
    &= \sigma'_3(p_1(x_1))p_3(x_1, x_2,x_3)  + \delta'_3(p_1(x_1))\\
    &\ \ - C_{0}p_1(x_1), 
\end{align*}

whence $C_{0} p_1(x_1)$, and $p_3p_1 = \sigma'_3(p_1)p_3 + \delta'_3(p_1)$ as desired. 
\end{itemize}
\end{proof}

\begin{theorem}\label{isoOreextension=3}
Let $\phi$ be a homomorphism of the iterated Ore extension as in Definition \ref{definitionhomooreiteretared3}. $\phi$ is an isomorphism if and only if the associated cv-polynomials have degree one respect to $x_1$, $x_2$ and $x_3$, respectively.
\end{theorem}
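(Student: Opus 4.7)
The plan is to mirror the strategy used in Theorem \ref{isoOreextension} for the two-step case, extending each of its two halves to accommodate the extra indeterminate $x_3$. First I would prove the forward direction by contradiction: assuming $\phi$ is an isomorphism while some $p_i$ fails to have degree exactly one in $x_i$, and derive a clash with either injectivity (if some degree is $0$) or surjectivity (if some degree is $\geq 2$).

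For the degree-zero cases, the idea is to adapt items (1)--(3) of the two-step proof. If $p_1 \in R$, then for any suitable $d\in R$ the elements $f = dx'_1$ and $g = dp_1$ (or $dp_1\cdot 1$ built in $S'$) have equal images, contradicting injectivity; similar recipes handle $p_2\in R[x_1;\sigma_1,\delta_1]$ constant in $x_2$, and $p_3$ constant in $x_3$. The only extra care needed is that now we must exhibit such clashes for three polynomials rather than two, so there are a few more subcases, but each one follows the same pattern of multiplying the offending constant by the appropriate $x'_j$ to produce two distinct preimages of the same element. For the degree $\ge 2$ cases, surjectivity lets us write $x_k = \phi(g)$ for some $g = \sum c_{ijl}\,x'^{\,i}_1 x'^{\,j}_2 x'^{\,l}_3$ and expand to $x_k = \sum c_{ijl}\,p_1^i p_2^j p_3^l$. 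A degree count in $x_1$, $x_2$, $x_3$ (done in this order, since $p_2$ and $p_3$ contribute only higher-or-equal powers of the earlier indeterminates) forces $i, j, l \in \{0,1\}$ in the unique monomial contributing $x_k$, and then forces each $p_i$ to have degree one in $x_i$, contradicting the hypothesis.

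For the reverse direction I would take $p_1 = a x_1 + b$, $p_2 = \gamma x_2 + g(x_1)$, $p_3 = \mu x_3 + h(x_1,x_2)$ with $a,\gamma,\mu\in R^{*}$ (the leading coefficients must be units, exactly as argued in Theorem \ref{isoOreextension}) and carry out the cv-polynomial verifications in sequence: the pair $(p_1,p_2)$ is already handled by Theorem \ref{isoOreextension}, and the conditions $p_3 r = \sigma'_3(r)p_3+\delta'_3(r)$, $p_3 p_1 = \sigma'_3(p_1)p_3+\delta'_3(p_1)$, $p_3 p_2 = \sigma'_3(p_2)p_3+\delta'_3(p_2)$ yield forced choices
\[
\sigma'_3 = \mu\,\sigma_3\,\mu^{-1}, \qquad \delta'_3 = \mu\,\delta_3 + \delta_{3(h(x_1,x_2),\sigma'_3)},
\]
after which a direct expansion (parallel to equations (\ref{p2p1iso2}) and (\ref{sigmap2p1iso2})) shows the three identities hold. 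The inverse $\psi:S\to S'$ is then built bottom-up, solving
\[
\psi(x_1) = a^{-1}x'_1 - a^{-1}\psi(b), \quad \psi(x_2) = \gamma^{-1}x'_2 - \gamma^{-1}g(\psi(x_1)), \quad \psi(x_3) = \mu^{-1}x'_3 - \mu^{-1}h(\psi(x_1),\psi(x_2)),
\]
and verifying that this assignment respects all the relations of $S$ (a routine check once $\psi(x_1)$ and $\psi(x_2)$ are known to satisfy the analogous relations in $S'$, by the two-step result) and satisfies $\psi\circ\phi = \mathrm{id}_{S'}$ and $\phi\circ\psi = \mathrm{id}_S$.

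The main obstacle I expect is bookkeeping: the surjectivity argument must simultaneously control the degree in $x_1$, $x_2$, and $x_3$, and in a non-commutative setting the product $p_1^i p_2^j p_3^l$ mixes indeterminates through the relations $x_j x_i = \sigma_j(x_i)x_j + \delta_j(x_i)$, so one must argue that the leading $x_3$-degree of $p_3^l$ is $l\cdot\deg_{x_3}(p_3)$ and then descend to $x_2$ and $x_1$ with similar leading-term control. Once this ordered degree argument is set up cleanly (exploiting that $p_3$ contains $x_3$ only through $\mu_w(x_1,x_2)x_3^w$ in the top term), the rest of the proof is a direct, if lengthier, transcription of Theorem \ref{isoOreextension}.
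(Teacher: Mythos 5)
Your proposal follows essentially the same route as the paper's proof: the forward direction reduces to the contradiction arguments of Theorem \ref{isoOreextension} applied to the extra polynomial $p_3$, and the reverse direction writes $p_3 = \mu x_3 + h(x_1,x_2)$ with $\mu$ a unit (the paper's $\beta$), forces $\sigma'_3 = \mu\sigma_3\mu^{-1}$ and $\delta'_3 = \mu\delta_3 + \delta_{3(h(x_1,x_2),\sigma'_3)}$, verifies the cv-conditions by direct expansion, and builds the inverse $\psi$ bottom-up with exactly the formulas the paper uses. Your added remarks on controlling the $x_3$-, $x_2$-, $x_1$-degrees in order for the surjectivity argument are a sensible elaboration of a step the paper leaves implicit, but they do not change the method.
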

\begin{proof}
To show the first implication, it suffices to follow the ideas in the Theorem \ref{isoOreextension} since if we consider $p_3$ as a constant, we get a contradiction. Similarly, if we consider $p_3$ with degree greater than one, a contradiction is also obtained.

Now, we consider the homomorphism $\phi: S' \to S$ defined as $\phi(x'_1) = p_1$, $\phi(x'_2) = p_2$ and $\phi(x'_3) = p_3$, where each of the polynomials $p_i$ has degree one respect to $x_i$, for $i =1,2,3$, respectively. By Theorem \ref{isoOreextension}, the polynomials $p_1 = ax_1+b$ and $p_2= \gamma x_2 + g(x_1)$ satisfy the relation $p_2p_1 = \sigma'_2(p_1)p_2+ \delta'_2(p_1)$, so we suppose that $p_3 = f(x_1,x_2)x_3 + h(x_1,x_2)$. 

Next, we show that $p_1, p_2$ and $p_3$ are cv-polynomials. Consider 
    \begin{align}\label{p3p2ison=3}
        p_3p_2 &= (f(x_1,x_2)x_3 + h(x_1,x_2))(\gamma x_2 + g(x_1))\notag\\
        &= f(x_1,x_2)x_3\gamma x_2 + f(x_1,x_2)x_3 g(x_1) + h(x_1, x_2)\gamma x_2 + h(x_1,x_2)g(x_1) \notag\\
        &= f(x_1,x_2)\sigma_3(\gamma x_2)x_3 + f(x_1,x_2)\delta_3(\gamma x_2) + f(x_1,x_2)\sigma_3(g(x_1))x_3 \notag\\
        &\ \ \ + f(x_1,x_2)\delta_3(g(x_1))+ h(x_1, x_2)\gamma x_2 + h(x_1,x_2)g(x_1), 
    \end{align}
    
and
    \begin{align*}
        \sigma'_3(p_2)p_3 + \delta'_3(p_2) &= \sigma'_3(\gamma x_2 + g(x_1))(f(x_1,x_2)x_3 + h(x_1,x_2)) + \delta'_3(\gamma x_2 + g(x_1))\\
        &= \sigma'_3(\gamma x_2)f(x_1,x_2)x_3 + \sigma'_3(\gamma x_2) h(x_1,x_2) + \sigma'_3(g(x_1))f(x_1,x_2)x_3\\
        &\ \ \ + \sigma'_3(g(x_1))h(x_1,x_2) + \delta'_3(\gamma x_2) + \delta'_3(g(x_1)).
    \end{align*}

These equalities suggest that we should define $\sigma'_3 = (\sigma_{3{f(x_1,x_2)}} \circ \sigma_3)$, that is, $\sigma'_3(-) = f(x_1, x_2)\sigma_3(-)f^{-1}(x_1, x_2)$, whence $f(x_1, x_2) \in R^*$, and we denote it by $f(x_1,x_2)= \beta$. Besides, $\delta'_3 = (\beta \delta_3 + \delta_{3{(h(x_1,x_2),\sigma'_3)}})$. In this way, $p_3 = \beta x_3 + h(x_1, x_2)$, and 
\begin{align*}
        \sigma'_3(p_2)p_3 + \delta'_3(p_2) &= \sigma'_3(\gamma x_2)\beta x_3 + \sigma'_3(\gamma x_2) h(x_1,x_2) + \sigma'_3(g(x_1))\beta x_3\\
        &\ \ \ + \sigma'_3(g(x_1))h(x_1,x_2) + \delta'_3(\gamma x_2) + \delta'_3(g(x_1))\\
        &= \beta \sigma_3(\gamma x_2)\beta^{-1}\beta x_3 + \beta \sigma_3(\gamma x_3)\beta^{-1}h(x_1, x_2) + \beta \sigma_3(g(x_1))\beta^{-1}\beta x_3\\
        &\ \ \ + \beta \sigma_3(g(x_1))\beta^{-1}h(x_1, x_2) + \beta \delta_3(\gamma x_2) + h(x_1,x_2)\gamma x_2 \\
        &\ \ \ - \beta \sigma_3(\gamma x_2)\beta^{-1}h(x_1,x_2) + \beta \delta_3(g(x_1)) + h(x_1,x_2)g(x_1)\\
        &\ \ \ - \beta \sigma_3(g(x_1)) \beta^{-1}h(x_1,x_2)\\
        &= \beta \sigma_3(\gamma x_2)x_3 + \beta \sigma_3(g(x_1))x_3+ \beta \delta_3(\gamma x_2) + h(x_1,x_2)\gamma x_2 \\
        &\ \ \ + \beta \delta_3(g(x_1)) + h(x_1,x_2)g(x_1). 
    \end{align*}
    
By (\ref{p3p2ison=3}), 
    \begin{align*}
        p_3p_2 &= \beta\sigma_3(\gamma x_2)x_3 + \beta\delta_3(\gamma x_2) + \beta\sigma_3(g(x_1))x_3 + \beta\delta_3(g(x_1))\\
        &\ \ \ + h(x_1, x_2)\gamma x_2 + h(x_1,x_2)g(x_1) = \sigma'_3(p_2)p_3 + \delta'_3(p_2). 
    \end{align*}
    
In the same way, by $(\sigma'_3,\delta'_3)$ as it was defined above, it is straightforward to see that $p_3p_1 = \sigma'_3(p_1)p_3 + \delta'_3(p_1)$.

Let us prove that $\phi$ is an isomorphism. We want to show that there exists a homomorphism  $\psi: S \to S'$ such that $\psi \circ \phi = {\rm id}_{S'}$ and $\phi \circ \psi = {\rm id}_{S}$. From Theorem \ref{isoOreextension}, 
\[
\psi(x_1) = a^{-1}x'_1 - a^{-1}\psi(b)\quad {\rm and}\quad \psi(x_2) = \gamma^{-1}x'_2 - \gamma^{-1}g(a^{-1}x'_1-a^{-1}\psi(b)).
\]
 
Let $x'_3 = \psi(\phi(x'_3))$. Then
\begin{align*}
  x'_3 &= \psi(\phi(x'_3)) = \psi(p_3) = \psi(\beta x_3 + h(x_1,x_2)) = \psi(\beta x_3) + \psi(h(x_1,x_2))\\
  &= \beta\psi(x_3) + h(\psi(x_1),\psi(x_2)),  
\end{align*}

whence $\psi(x_3) = \beta^{-1}x'_3 - \beta^{-1}h(\psi(x_1),\psi(x_2))$. These facts show that $\psi \circ \phi = {\rm id}_{S'}$ holds, where $p_1 = ax_1 + b$, $p_2 = \gamma x_2 + g(x_1)$ and $p_3 = \beta x_3 + h(x_1,x_2)$. In an analogous way, we can prove that $\phi \circ \psi = {\rm id}_{S}$.
\end{proof}

\section{$n$-step iterated Ore extensions}\label{Homocasen}

Having in mind the ideas developed in Sections \ref{Homon=2} and \ref{Homon=3}, in this short section we present a general framework of homomorphisms between iterated Ore extensions of $n$-step iterated Ore extensions. 

\begin{definition}\label{definitioniteretaredn}
Consider the Ore extensions $S' =R[x'_1;\sigma'_1,\delta'_1]\cdots[x'_n; \sigma'_n, \delta'_n]$ and  $S = R[x_1;\sigma_1,\delta_1]\cdots[x_n; \sigma_n, \delta_n]$. Let the homomorphism $\phi$ defined as $\phi(x'_i):= p_i$, where $p_i = p(x_1, x_2, \dotsc, x_i)$, with $1 \leq i \leq n$, and such that $p_i$ satisfies for $1 \leq j < i \leq n$ and $r \in R$,
\[
\phi(x'_ix'_j) = \sigma'_i(\phi(x'_j))p_i + \delta'_i(\phi(x'_j)) \quad {\rm and} \quad  \phi(x'_ir) = \sigma'_i(r)p_i + \delta'_i(r),
\]
or equivalently,
\[
p_ip_j = \sigma'_i(p_j)p_i + \delta'_i(p_j)\quad {\rm and}\quad p_i r = \sigma'_i(r)p_i + \delta'_i(r).
\]

We say that the polynomials $p_i$ are the {\em cv-polynomials for iterated Ore extensions respect to} the quasi-derivations $(\sigma'_i, \delta'_i)$ for $1 \leq i \leq n$.

Conversely, if a polynomial $p_i \in S$, where $p_i$ depends on the indeterminates $x_1,x_2, \dotsc, x_i$, with $1 \leq i \leq n$, satisfies the above conditions, where $(\sigma_i', \delta_i')$ are the corresponding quasi-derivations, then we can define $\phi:S' \to S$ by $\phi(x'_i) = p_i$.
\end{definition}

\begin{example}
Consider the single parameter quantum matrix algebra $O_p(M_n(\Bbbk))$ described in Example \ref{examplesOreiterated} (iii). Then
\begin{equation}\label{matrixquantum}
   \small{O_p(M_n(\Bbbk)) \cong \Bbbk[x_{11};\sigma_{11},\delta_{11}]\cdots[x_{1n};\sigma_{1n},\delta_{1n}]\cdots[x_{n1};\sigma_{n1},\delta_{n1}]\cdots[x_{nn};\sigma_{nn}, \delta_{nn}]},
\end{equation} 
with $\sigma_{ij}$ and $\delta_{ij}$, $1 \leq i,j,l,m \leq n$, subject to the relations
\[
\sigma_{ij}(x_{lm}) = \begin{cases}
px_{lm}, & i > l, j = m,\\
px_{lm}, & i = l, j > m,\\
x_{lm}, & i > l, j < m,\\
x_{lm}, & i > l, j > m.
\end{cases}
\]
and $\delta_{ij}(x_{lm}) = (p - p^{-1})x_{im}x_{lj}$ for $i > l, j > m$, otherwise, zero. 

Now, consider $S' = O'_p(M_n(\Bbbk))$ and $S = O_p(M_n(\Bbbk))$ which generating basis $\{x'_{ij}\}$ and $\{x_{ij}\}$, respectively. The idea is to show that if we defined the homomorphism $\phi:S' \to S$ as $\phi(x'_{ij}) := p_{ij} = b_{ji}x_{ji}$, then the polynomials $p_{ij}$ for $1 \leq i,j\leq n$ are cv-polynomials. For $1 \leq i,j, l, m \leq n$, we have
\begin{align*}
    \phi(x'_{ij}x'_{lm}) & = p_{ij}p_{lm} = \sigma'_{ij}(\phi(x'_{lm}))p_{ij} + \delta'_{ij}(\phi(x'_{lm})),\quad {\rm and}\\
   \phi(x'_{ij}r) & = p_{ij}r = \sigma'_{ij}(r)p_{ij} + \delta'_{ij}(r).
\end{align*}

Suppose that $\sigma'_{ij} = \sigma_{ji}$ and $\delta'_{ij} = \delta_{ji}$. Since
\begin{align*}
    p_{ij}p_{lm} &= b_{ji}x_{ji}b_{ml}x_{ml} = \sigma_{ji}(b_{ml}x_{ml})b_{ji}x_{ji} + \delta_{ji}(b_{ml}x_{ml})\\
    &= b_{ji}b_{ml}\sigma_{ji}(x_{ml})x_{ji} + b_{ml}\delta_{ji}(x_{ml}),
\end{align*}

and
\begin{align*}
    \sigma'_{ij}(p_{lm})p_{ij} + \delta'_{ij}(p_{lm})
    &= \sigma_{ji}(b_{ml}x_{ml})b_{ji}x_{ji} + \delta_{ji}(b_{ml}x_{ml})\\
    &= b_{ji}b_{ml}\sigma_{ji}(x_{ml})x_{ji} + b_{ml}\delta_{ji}(x_{ml}),
\end{align*}

it follows that $\phi(x'_{ij}r) = p_{ij}r = \sigma'_{ij}(r)p_{ij} + \delta'_{ij}(r)$ as desired (recall that $\sigma_{ij}$ is the identity and $\delta_{ij}$ is zero for elements $r \in \Bbbk$ for every $1 \leq i,j \leq n$). Therefore, the polynomials $p_{ij} = b_{ji}x_{ji}$ are cv-polynomials respect to the quasi-derivation $(\sigma'_{ij}, \delta'_{ij})$, and $\phi$ is a homomorphism of iterated Ore extensions.
\end{example}

As in the previous sections, for $S$ an iterated Ore extensions with $n$ indeterminates and $p_i \in S$, where $p_i$ is a polynomial in the indeterminates $x_1, x_2$ up to $x_i$, for $1 \leq i \leq n$, we say that $p_i$ is {\it right invariant} if $p_iR[x_1;\sigma_1,\delta_1]\cdots[x_{i-1};\sigma_{i-1}, \delta_{i-1}] \subseteq R[x_1;\sigma_1,\delta_1]\cdots[x_{i-1};\sigma_{i-1}, \delta_{i-1}]p_i$.

\begin{theorem}
Fix $i$ and consider $x_i^n$ a monomial invariant  of degree $n \geq 1$. Let $p_i \in S$ as $p_i = f(x_1, \ldots, x_{i-1})x_i^n + g(x_1, \ldots, x_i)$ with the degree of $g$ less than or equal to $n$. Then:
    \begin{itemize}
        \item [\rm (1)] $p_i$ is a cv-polynomial respect to $(\sigma'_i, \delta'_i)$ if and only if $g(x_1, \ldots, x_i)$ is a cv-polynomial respect to $(\sigma'_i, \delta'_i)$ and $f(x_1, \ldots, x_{i-1})\sigma^{n}_i(-) = \sigma'_i(-)f(x_1, \ldots, x_{i-1})$.
        \item [\rm (2)] If $p_i$ is a left multiple of $f(x_1, \ldots, x_{i-1})$, then $p_i$ is a cv-polynomial if and only if is invariant.
    \end{itemize}
\end{theorem}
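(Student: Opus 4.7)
The plan is to mirror the arguments of Theorems \ref{theoreminvariann=2} and \ref{theoreminvariann=3}, since the $n$-step case introduces no essentially new difficulty beyond notation. The key algebraic fact to be exploited is that because $x_i^n$ is right invariant, for every element $q$ of the lower Ore extension $T := R[x_1;\sigma_1,\delta_1]\cdots[x_{i-1};\sigma_{i-1},\delta_{i-1}]$ one has the identity $x_i^n q = \sigma_i^n(q) x_i^n$. Since each polynomial $p_j$ with $j<i$ lies in $T$, this identity governs the interaction between $x_i^n$ and the polynomials $p_j$ on which the cv-condition must be imposed.

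For the forward direction of (1), I would compute $p_i p_j$ in two ways for each $j<i$. Directly, using $p_i = fx_i^n + g$ and the invariance of $x_i^n$, one obtains
\[
p_i p_j \;=\; f\,\sigma_i^n(p_j)\,x_i^n \;+\; g\,p_j.
\]
The cv-condition gives
\[
p_i p_j \;=\; \sigma'_i(p_j)\,f\,x_i^n \;+\; \bigl[\sigma'_i(p_j)\,g + \delta'_i(p_j)\bigr].
\]
Because $g$ has $x_i$-degree strictly less than $n$, comparing the coefficient of $x_i^n$ with the lower-order remainder forces the identities $f\,\sigma_i^n(-) = \sigma'_i(-)\,f$ and $g\,p_j = \sigma'_i(p_j)\,g + \delta'_i(p_j)$; applying the same decomposition to $p_i r$ for $r\in R$ yields the companion relation $g r = \sigma'_i(r)\,g + \delta'_i(r)$. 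Together these say exactly that $g$ is a cv-polynomial with respect to $(\sigma'_i, \delta'_i)$. The backward direction is obtained by reversing the substitution: assuming the two conditions, substitute into the expansions of $p_i p_j$ and $p_i r$ and collect terms to recover the cv-identities for $p_i$.

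For Part (2), I interpret the hypothesis that $p_i$ is a left multiple of $f(x_1,\ldots,x_{i-1})$ as forcing $g = 0$, exactly as in the analogous arguments of Theorems \ref{theoreminvariann=2}(2) and \ref{theoreminvariann=3}(2), so that $p_i = f\,x_i^n$. If $p_i$ is a cv-polynomial, the two-way computation above collapses to $f\,\sigma_i^n(p_j)\,x_i^n = \sigma'_i(p_j)\,f\,x_i^n + \delta'_i(p_j)$; matching $x_i$-degrees forces $\delta'_i(p_j)=0$ and $f\,\sigma_i^n(p_j) = \sigma'_i(p_j)\,f$, whence $p_i p_j = \sigma'_i(p_j)\,p_i \in T \cdot p_i$, establishing right invariance. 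Conversely, if $p_i = f x_i^n$ is right invariant, then for each $j<i$ there exists an element of $T$, which we name $\sigma'_i(p_j)$, satisfying $p_i p_j = \sigma'_i(p_j)\,p_i$; comparing with $p_i p_j = f\,\sigma_i^n(p_j)\,x_i^n$ yields $f\,\sigma_i^n(-) = \sigma'_i(-)\,f$, and $p_i$ is a cv-polynomial with respect to $(\sigma'_i, \delta'_i = 0)$.

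The main obstacle is purely organizational rather than conceptual: one must verify that the invariance identity $x_i^n q = \sigma_i^n(q) x_i^n$ applies uniformly for every $p_j$ with $j<i$ and every $r \in R$ because all these elements live in the lower extension $T$, and then confirm that the maps defined on $T$ by the coefficient comparison truly assemble into a quasi-derivation $(\sigma'_i,\delta'_i)$ on $T$. No induction on $n$ is needed; the proof is a direct transcription of Theorem \ref{theoreminvariann=3}, with $(x_i^n, f(x_1,\ldots,x_{i-1}), g(x_1,\ldots,x_i))$ playing the role of $(x_3^n, f(x_1,x_2), g(x_1,x_2,x_3))$ and with the cv-identity checked against each $p_j$, $j < i$, in place of $p_2$.
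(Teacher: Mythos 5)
Your proposal follows essentially the same route as the paper's own proof: expand $p_ip_j$ once via the cv-identity and once via the right invariance of $x_i^n$ (so that $x_i^n p_j = \sigma_i^n(p_j)x_i^n$), compare the $x_i^n$-term with the lower-order remainder to extract the two conditions, and in Part (2) set $g=0$ to collapse the identity to $f\,\sigma_i^n(p_j)\,x_i^n = \sigma'_i(p_j)\,f\,x_i^n$. The only cosmetic differences are that you additionally record the companion relation for $p_i r$, $r\in R$, and that in the converse of (2) you read $\sigma'_i(p_j)$ off from the invariance rather than postulating its existence as the paper does; neither changes the substance of the argument.
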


\begin{proof}
\begin{itemize}
    \item[\rm (1)] If $p_i$ is cv-polynomial respect to the quasi-derivation $(\sigma'_i, \delta'_i)$, then $p_ip_j = \sigma'_i(p_j)p_i + \delta'_i(p_j)$, and hence
\begin{align}\label{sigmafgn}
        p_ip_j &= \sigma'_i(p_j)[f(x_1, \ldots, x_{i-1})x_i^n + g(x_1, \ldots, x_i)] + \delta'_i(p_j)\notag\\
        &= \sigma'_i(p_j)f(x_1, \ldots, x_{i-1})x_i^n + [\sigma'_i(p_j)g(x_1, \ldots, x_i)] + \delta'_i(p_j).
\end{align}
    
Also, note that
\begin{align}\label{fgsigman}
        p_ip_j &= [f(x_1, \ldots, x_{i-1})x_i^n + g(x_1, \ldots, x_i)]p_j  \notag\\
        &= f(x_1, \ldots, x_{i-1})x_i^np_j + g(x_1, \ldots, x_i)p_j\notag\\
        &= f(x_1, \ldots, x_{i-1})\sigma_i^{n}(p_j)x_i^n + g(x_1, \ldots, x_i)p_j,
\end{align}
    
    where the last expression is obtained due to $x_i^n$ is a monomial invariant of degree $n$. By (\ref{sigmafgn}) and (\ref{fgsigman}), $f(x_1, \ldots, x_{i-1})\sigma^{n}_i = \sigma'_if(x_1, \ldots, x_{i-1})$, and 
\begin{equation}\label{gcvpolynomial}
       g(x_1, \ldots, x_i)p_j =  \sigma'_i(p_j)g(x_1, \ldots, x_i) + \delta'_i(p_j),
\end{equation}
    
which shows that $g(x_1, \ldots, x_i)$ is a cv-polynomial respect to $(\sigma'_i, \delta'_i)$.

Conversely, if $g(x_1, \ldots, x_i)$ is a cv-polynomial respect to the quasi-derivation $(\sigma'_i, \delta'_i)$, then  
    \[
    g(x_1, \ldots, x_i)p_j = \sigma'_i(p_j)g(x_1, \ldots, x_i) + \delta'_i(p_j),
    \]
    and by using that $f(x_1, \ldots, x_{i-1})\sigma^{n}_i(-) = \sigma'_i(-)f(x_1, \ldots, x_{i-1})$, 
    \begin{align*}
        p_ip_j &= f(x_1, \ldots, x_{i-1})x_i^np_j + g(x_1, \ldots, x_i)p_j\\
        &= f(x_1, \ldots, x_{i-1})\sigma_i^{n}(p_j)x_i^n + g(x_1, \ldots, x_i)p_j\\
        &= \sigma'_i(p_j)f(x_1, \ldots, x_{i-1})x_i^n + \sigma'_i(p_j)g(x_1, \ldots, x_i) + \delta'_i(p_j)\\
        &= \sigma'_i(p_j)[f(x_1, \ldots, x_{i-1})x_i^n + g(x_1, \ldots, x_i)] + \delta'_i(p_j)\\
        &= \sigma'_i(p_j)p_i + \delta'_i(p_j).
    \end{align*}
    
    \item [\rm (2)] Suppose that $p_i$ is a cv-polynomial respect to the quasi-derivation $(\sigma'_i, \delta'_i)$. Then $p_ip_j = \sigma'_i(p_j)p_i + \delta'_i(p_j)$, and since $p_i$ is a left multiple of $f(x_1, \ldots, x_{i-1})$, it follows that $g(x_1, \ldots, x_i) = 0$ and $\delta'_i(p_j)= 0$ due to (\ref{gcvpolynomial}). From (1), we know that $f(x_1, \ldots, x_{i-1})\sigma^{n}_i(-) = \sigma'_i(-)f(x_1, \ldots, x_{i-1})$, and
    \[
    p_ip_j =\sigma'_i(p_j)f(x_1, \ldots, x_{i-1})x_i^n= f(x_1, \ldots, x_{i-1})\sigma^{n}_i(p_j)x_i^n, 
    \]
    which shows that $p_i$ is invariant.

Finally, assume that $p_i$ es invariant for each $i$. Then $g(x_1, \ldots, x_i) = 0$, and
\[
p_ip_j = f(x_1, \ldots, x_{i-1})x_i^np_j = f(x_1, \ldots, x_{i-1})\sigma^{n}_i(p_j)x_i^n.
\]
   
If $\sigma'_i$ satisfies $f(x_1, \ldots, x_{i-1})\sigma^{n}_i(-) = \sigma'_i(-)f(x_1, \ldots, x_{i-1})$, then we get that $p_i$ is a cv-polynomial.
\end{itemize}
\end{proof}

For $S = R[x_1;\sigma_1, \delta_1]\cdots[x_n;\sigma_n,\delta_n]$, $\delta_i$ is said to be {\em algebraic} if there exists a non-zero polynomial $g \in S$ such that $g(x_1, x_2, \ldots, \delta_i, \ldots, x_n)=0$. The evaluation of a polynomial $g(x_1,\ldots, x_n)=\sum_{\alpha \in \mathbb{N}^n}a_{\alpha}x_1^{\alpha_1}\cdots x_n^{\alpha_n}$ at $\delta_i$ is defined to be the operator  $g(x_1, x_2, \ldots, \delta_i, \ldots, x_n) = \sum_{\alpha \in \mathbb{N}^n} a_{\alpha}x_1^{\alpha_1}x_2^{\alpha_2}\cdots \delta_i^{\alpha_i}\cdots x_n^{\alpha_n}$ on $S$.

The following theorem shows that if each of the cv-polynomials has a degree greater than one, then these can be characterized with a suitable quasi-derivation $(\sigma'_i, \delta'_i)$. This theorem extends Proposition \ref{LamLeroy1992Theorem2.12}. 

\begin{theorem} \label{teo2.12_paran}
Consider iterated Ore extensions $S'$ and $S$ with $n$ indeterminate. Let $p_i \in S$ given by $p_i = \sum_{\alpha \in \mathbb{N}^i} a_{\alpha}x^{\alpha}$ for $1 \leq i \leq n$. 
\begin{itemize}
    \item [\rm (1)] If $p_i$ is a cv-polynomial respect to the quasi-derivation $(\sigma'_i, \delta'_i)$, then $\delta'_i = (p_i - D_{0})(\delta_i) + \delta_{i(D_{0}, \sigma'_i)}$, where $D_{0} = \sum_{\alpha \in \mathbb{N}^{i-1}} a_{\alpha}x^{\alpha}$ with $\alpha_i = 0$, and if the degree of $x_i$ is greater than or equal to $1$, denote it as $\beta$, then $\sigma_i\delta_i = -\delta_i\sigma_i$ and $a_{\alpha_1\cdots \alpha_{i-1}}x_1^{\alpha_1}\cdots x_{i-1}^{\alpha_{i-1}}\sigma_i^{\alpha_i}(-) = \sigma'_i(-)a_{\alpha_1\cdots\alpha_{i-1}}x_1^{\alpha_1}\cdots x_{i-1}^{\alpha_{i-1}}$.

\item[\rm (2)] If $\delta_i$ is not an algebraic derivation, then $p_i$ is a cv-polynomials respect to $(\sigma'_i, \delta'_i)$ if and only if $\delta'_i = (p_i - D_{0})(\delta_i) + \delta_{i(D_{0}, \sigma'_i)}$, where $D_{0} = \sum_{\alpha \in \mathbb{N}^i} a_{\alpha}x^{\alpha}$ with $\alpha_i = 0$.
\end{itemize}
\end{theorem}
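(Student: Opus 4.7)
The plan is to extend to the $n$-step setting the arguments already used for Theorems \ref{teo2.12_paran=2} and \ref{teo2.12_paran=3}, with multi-indices $\alpha = (\alpha_1,\ldots,\alpha_i) \in \mathbb{N}^{i}$ replacing the double and triple indices. The two crucial tools are: (a) an evaluation-type homomorphism that substitutes $\delta_i$ for $x_i$; and (b) the identity $\delta_i^{\alpha_i}(1) = 0$ for $\alpha_i \geq 1$, which kills every monomial of $p_i$ in which $x_i$ actually appears when evaluated at $1$.

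For part (1), I would first consider the homomorphism
\[
\lambda : S \longrightarrow \operatorname{End}(R[x_1;\sigma_1,\delta_1]\cdots[x_{i-1};\sigma_{i-1},\delta_{i-1}], +),
\]
defined by $\lambda(x_j) = x_j$ for $j < i$, $\lambda(x_i) = \delta_i$, and $\lambda(r) = $ left multiplication by $r$ for $r\in R$ (the relations $p_i p_j = \sigma'_i(p_j)p_i + \delta'_i(p_j)$ that we need only involve $j < i$, so restricting to this codomain is enough). Applying $\lambda$ to the cv-polynomial identity $p_i p_j = \sigma'_i(p_j) p_i + \delta'_i(p_j)$ for $j < i$ and evaluating at $1$, only the monomials of $p_i$ with $\alpha_i = 0$ contribute on the right. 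This yields
\[
p_i(x_1,\ldots,\delta_i)(p_j) = \sigma'_i(p_j)D_0 + \delta'_i(p_j),
\]
and rearranging gives $\delta'_i(p_j) = (p_i - D_0)(\delta_i)(p_j) + \delta_{i(D_0,\sigma'_i)}(p_j)$, as required.

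For the second half of part (1), assuming the $x_i$-degree $\beta \geq 1$, I would expand $p_i p_j$ as a polynomial in $x_i$ using the iterated Leibniz-type rule $x_i^{\alpha_i} r = \sum_{k=0}^{\alpha_i} f^{\alpha_i}_k(r) x_i^k$, with $f^{\alpha_i}_k$ a sum of products of $k$ factors $\sigma_i$ and $\alpha_i-k$ factors $\delta_i$. Comparing the $x_i^{\alpha_i}$-components of $p_i p_j - \sigma'_i(p_j)p_i$ with those of $\delta'_i(p_j)$, and using that the latter has $x_i$-degree $0$, the leading coefficient in each $x_i^{\alpha_i}$ forces the relation $a_{\alpha_1\cdots\alpha_{i-1}}x_1^{\alpha_1}\cdots x_{i-1}^{\alpha_{i-1}}\sigma_i^{\alpha_i}(-) = \sigma'_i(-)a_{\alpha_1\cdots\alpha_{i-1}}x_1^{\alpha_1}\cdots x_{i-1}^{\alpha_{i-1}}$, while the vanishing of every cross term $p_{\sigma_i,\delta_i}$ (mixed products of $\sigma_i$ and $\delta_i$) forces the anticommutation $\sigma_i\delta_i = -\delta_i\sigma_i$, exactly as in the two- and three-step cases.

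Part (2) follows a standard pattern. The forward direction is part (1). For the converse, non-algebraicity of $\delta_i$ makes $\lambda$ injective, so the hypothesis that $\delta'_i = q(\delta_i) + \delta_{i(D_0,\sigma'_i)}$, with $q := p_i - D_0$, is an identity of $\sigma'_i$-derivations on the subring that lifts uniquely through $\lambda^{-1}$ to the polynomial identity $q\cdot p_j = \sigma'_i(p_j)\, q + q(\delta_i)(p_j)$ inside $S$. Adding $D_0 p_j$ to both sides and applying the definition of $\delta_{i(D_0,\sigma'_i)}$, one recovers $p_i p_j = \sigma'_i(p_j)p_i + \delta'_i(p_j)$, so $p_i$ is a cv-polynomial with respect to $(\sigma'_i,\delta'_i)$. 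The main obstacle I anticipate is the bookkeeping in part (1): unlike the cases $n=2,3$ laid out explicitly in the paper, here one has to simultaneously track how commutations with the lower indeterminates $x_1,\ldots,x_{i-1}$ interact with the Leibniz expansion of $x_i^{\alpha_i} r$, and to argue in full generality that cancellation of every mixed $\sigma_i$-$\delta_i$ cross term really does force the anticommutation $\sigma_i\delta_i = -\delta_i\sigma_i$ together with the coefficient condition.
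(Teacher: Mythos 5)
Your proposal follows essentially the same route as the paper's proof: the same evaluation homomorphism $\lambda_i$ sending $x_i \mapsto \delta_i$ and fixing the lower indeterminates, evaluation at $1$ to isolate $D_0$ and identify $\delta'_i$, comparison of the positive $x_i$-degree components to force the coefficient condition and the vanishing of the mixed $\sigma_i$-$\delta_i$ terms (hence $\sigma_i\delta_i=-\delta_i\sigma_i$), and injectivity of $\lambda_i$ under non-algebraicity of $\delta_i$ to lift the derivation identity back to $S$ in part (2). The bookkeeping difficulty you flag at the end is also present in the paper's own argument, which likewise collects all mixed terms into a single symbol $p_{\sigma_i,\delta_i}$ without expanding them.
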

\begin{proof}
\begin{itemize}
\item[\rm (1)] Consider the homomorphism 
$$\lambda_i: S \to {\rm End}(R[x_1;\sigma_1,\delta_1]\cdots[x_i;\sigma_{i-1},\delta_{i-1}], +)$$ 
defined by
\[
\lambda_i(x_{j}) = \begin{cases}
x_j, & j < i,\\
\delta_{i}, & j = i,
\end{cases}
\]    

and for any $r \in R$, $\lambda_i(r)$ is defined by left multiplication on $R$. Since $p_i$ is a cv-polynomial respect to the quasi-derivation $(\sigma'_i, \delta'_i)$, for $j < i$,
\begin{equation}\label{conditionoreiteratedn}
    p_ip_j = \sigma'_i(p_j)p_i + \delta'_i(p_j).
\end{equation}

If we evaluate the homomorphism $\lambda_i$ in (\ref{conditionoreiteratedn}), then
\begin{equation}\label{equationdeltai}
        p_i(x_1,\ldots, \delta_i)p_j = \sigma'_i(p_j)p_i(x_1,\ldots, \delta_i) + \delta'_i(p_j),
\end{equation}

and by evaluating at $1$, we get
\[
p_i(x_1,\ldots, \delta_i)p_j = \sigma'_i(p_j)\sum_{\alpha \in \mathbb{N}^{i-1}} a_{\alpha}x^{\alpha} + \delta'_i(p_j).
\]

Consider $D_0 = \sum_{\alpha \in \mathbb{N}^{i-1}} a_{\alpha}x^{\alpha}$. Then
\begin{align*}
    \delta'_i(p_j) &= p_i(x_1,\ldots, \delta_i)p_j - \sigma'_i(p_j)D_0\\
    &=  p_i(x_1,\ldots, \delta_i)p_j - D_0p_j + D_0p_j - \sigma'_i(p_j)D_0\\
    &= [p_i(x_1,\ldots, \delta_i) - D_0]p_j + \delta'_{i(D_0, \sigma'_i)}(p_j)\\
    &= [p_i(x_1,\ldots, x_i) - D_0](\delta_i)(p_j) + \delta'_{i(D_0, \sigma'_i)}(p_j),
\end{align*}

Now, consider the degree of $x_i$ greater than or equal to $1$. Since $p_i$ is a cv-polynomial, $p_ip_j - \sigma'_i(p_j)p_i = \delta'_i(p_j)$, and so
\begin{align*}
   &\left( \sum_{\alpha \in \mathbb{N}^{i}} a_{\alpha}x^{\alpha}\right)\left( \sum_{\alpha \in \mathbb{N}^{j}} b_{\alpha}x^{\alpha}\right) - \sigma'_i\left( \sum_{\alpha \in \mathbb{N}^{j}} b_{\alpha}x^{\alpha}\right)\sum_{\alpha \in \mathbb{N}^{i}} a_{\alpha}x^{\alpha}\\
   &= [a_{\beta}x^{\alpha_1\cdots\alpha_{i-1}}\sigma^{\beta}_{i}(b_{\gamma}x^{\gamma}) - \sigma^{'}_{i}(b_{\gamma}x^{\gamma})a_{\beta}x^{\alpha_1\cdots\alpha_{i-1}}]x^{\beta} + \cdots - \sigma'_i(p_j)D_0\\
   &\ \ \ + p_{\sigma_{i},\delta_{i}} +  \sum_{\alpha \in \mathbb{N}^i} a_{\alpha}x_1^{\alpha_1}\cdots x_{i-1}^{\alpha_{i-1}}\delta_i^{\alpha_i}(p_j) + D_0p_j = \delta'_i(p_j),
\end{align*}

where $a_{\beta}$ is the leading coefficient of $p_i$ and $p_{\sigma_{i}, \delta_{i}}$ are the possible combinations of $\sigma_{\alpha}$ and $\delta_{\alpha}$. Since $\delta'_i = (p_i - D_{0})(\delta_i) + \delta_{i(D_{0}, \sigma'_i)}$, 
\[
[a_{\beta}x^{\alpha_1\cdots\alpha_{i-1}}\sigma^{\beta}_{i}(b_{\gamma}x^{\gamma}) - \sigma'_{i}(b_{\gamma}x^{\gamma})a_{\beta}x^{\alpha_1\cdots\alpha_{i-1}}]x^{\beta} + \cdots +  p_{\sigma_{i},\delta_{i}} = 0,
\]

and consequently, $\sigma_i\delta_i = -\delta_i\sigma_i$ and 
\[
a_{\alpha_1\cdots \alpha_{i-1}}x_1^{\alpha_1}\cdots x_{i-1}^{\alpha_{i-1}}\sigma_i^{\alpha_i}(-) = \sigma'_i(-)a_{\alpha_1\cdots\alpha_{i-1}}x_1^{\alpha_1}\cdots x_{i-1}^{\alpha_{i-1}}.
\]

\item [\rm (2)] Suppose that $\delta_i$ is not an algebraic derivation. If $q_i = p_i - D_0$ with $q_i$ a polynomial in the indeterminates $x_1, x_2, \dotsc, x_i$, then $\delta'_i = q_i(x_1, \ldots, \delta_i) + \delta_{i(D_0, \sigma'_i)}$. In this way, $q_i(x_1, \ldots, \delta_i) = \delta'_i - \delta_{i(D_0, \sigma'_i)}$ is a $\sigma'_i$-derivation. For any $r \in D$ and $j<i$, 
\[
q_i(p_jr) = \sigma'_i(p_j)q_i(r) + q_i(p_j)r,
\]

and thus, by evaluating the homomorphism $\lambda_i$ and $r = 1$,
\[
q_i(x_1, \ldots, \delta_i)\lambda_i(p_j) = \lambda_i(\sigma'_i(p_j))q_i(x_1, \ldots, \delta_i) + \lambda_i(q_i(x_1, \ldots, \delta_i)(p_j))
\]

which is in the image of $\lambda_i$. Since $\delta_i$ is not algebraic and $\lambda_i$ is injective, then $\lambda^{-1}_i$ exists and 
\[
q_i(x_1, \ldots, x_i)p_j = \sigma'_i(p_j)q_i(x_1, \ldots, x_i) + q_i(x_1, \ldots, \delta_i)(p_j).
\]

By replacing $q_i(x_1, \ldots, \delta_i) = \delta'_i - \delta_{i(D_0, \sigma'_i)}$ and $q_i = p_i - D_0$ in the last equation, we get
\begin{align*}
    (p_i - D_0)p_j &= \sigma'_i(p_j)(p_i - D_0) + (\delta'_i  - \delta_{i(D_0, \sigma'_i)})(p_j)\\
    &= \sigma'_i(p_j)(p_i) - \sigma'_i(p_j)D_0 + \delta'_i(p_j)  - \delta_{i(D_0, \sigma'_i)}(p_j)\\
    &= \sigma'_i(p_j)(p_i) - \sigma'_i(p_j)D_0 + \delta'_i(p_j) + \sigma'_i(p_j)D_0 - D_0p_j\\
    &= \sigma'_i(p_j)(p_i) + \delta'_i(p_j)  - D_0p_j,
\end{align*}

whence $p_ip_j = \sigma'_i(p_i)p_j + \delta'_i(p_i)$, which concludes the proof.
\end{itemize}
\end{proof}

From the discussion above we have immediately the following result.

\begin{proposition}\label{theoisomorphism}
Consider the Ore extensions $S'= R[x'_1;\sigma'_1,\delta'_1]\cdots[x'_n; \sigma'_n, \delta'_n]$ and $S = R[x_1;\sigma_1,\delta_1]\cdots[x_n; \sigma_n, \delta_n]$. The homomorphism $\phi: S' \to S$ defined by $\phi(x'_i) = p(x_i) = x_i$ for each $i = 1, \ldots, n$, is an isomorphism in the sense of cv-polynomials.  
\end{proposition}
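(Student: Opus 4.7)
The plan is to recognize that this proposition is essentially a consistency/sanity check for the cv-polynomial framework: sending each $x'_i$ to $x_i$ is a kind of ``identity'' transfer, and the work amounts to verifying that this map fits into Definition \ref{definitioniteretaredn} with the natural choice $(\sigma'_i,\delta'_i) = (\sigma_i,\delta_i)$, and then producing an explicit inverse.

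First I would verify the cv-polynomial conditions. With $p_i := x_i$ for $1 \leq i \leq n$, and taking $(\sigma'_i,\delta'_i) = (\sigma_i,\delta_i)$ for every $i$, the requirements of Definition \ref{definitioniteretaredn} become, for $j < i \leq n$ and $r \in R$,
\[
x_ix_j = \sigma_i(x_j)x_i + \delta_i(x_j) \quad \text{and} \quad x_ir = \sigma_i(r)x_i + \delta_i(r).
\]
But these are precisely the defining commutation relations of the iterated Ore extension $S = R[x_1;\sigma_1,\delta_1]\cdots[x_n;\sigma_n,\delta_n]$, so they hold automatically. By the converse clause of Definition \ref{definitioniteretaredn}, the assignment $\phi(x'_i) = x_i$ therefore extends to a well-defined ring homomorphism $\phi: S' \to S$.

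Next I would construct the inverse $\psi: S \to S'$ by $\psi(x_i) = x'_i$. The same verification, with the roles of $S$ and $S'$ interchanged, shows that the defining relations of $S$ are mapped to valid relations in $S'$; since the quasi-derivations on both sides are taken to agree, $\psi$ is likewise a well-defined homomorphism. On the generators one has $\psi(\phi(x'_i)) = \psi(x_i) = x'_i$ and $\phi(\psi(x_i)) = \phi(x'_i) = x_i$, and since $\psi\circ\phi$ and $\phi\circ\psi$ are ring homomorphisms agreeing with the identity on the generating sets of $S'$ and $S$ respectively, they coincide with $\mathrm{id}_{S'}$ and $\mathrm{id}_S$. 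Thus $\phi$ is an isomorphism, as claimed. (Alternatively, one could appeal to the natural $n$-step extension of Theorems \ref{isoOreextension} and \ref{isoOreextension=3}: each $p_i = x_i$ has degree one in $x_i$, which is precisely the degree-one criterion for an isomorphism in those theorems.)

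There is no real obstacle here; the only subtle point is noting that the statement tacitly requires the choice of quasi-derivations $(\sigma'_i,\delta'_i) = (\sigma_i,\delta_i)$ for the cv-polynomial interpretation to be well-posed. Once this identification is made, the proposition reduces to the observation that the identity-type assignment on generators respects the Ore relations by construction. This proposition therefore serves as a consistency check for Definition \ref{definitioniteretaredn} rather than requiring the deeper machinery developed in Theorems \ref{Theoremaversion2}, \ref{teo2.12_paran=2}, or \ref{teo2.12_paran}.
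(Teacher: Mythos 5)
Your proposal is correct and coincides with what the paper intends: the paper offers no written proof, stating only that the result follows ``immediately'' from the preceding discussion, and your verification --- that with $(\sigma'_i,\delta'_i)=(\sigma_i,\delta_i)$ the cv-polynomial conditions for $p_i=x_i$ reduce to the defining Ore relations of $S$, together with the explicit inverse $\psi(x_i)=x'_i$ --- is exactly that immediate check. Your observation that the statement tacitly requires the two families of quasi-derivations to agree is a fair and worthwhile clarification of a point the paper leaves implicit.
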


\begin{example}
Consider $O_p(M_2(\Bbbk)) \cong \Bbbk[x'_{11}][x'_{12};\sigma'_{12}][x'_{21};\sigma'_{21}][x'_{22};\sigma'_{22},\delta'_{22}]$ and $O_q(M_2(\Bbbk)) \cong \Bbbk[x_{11}][x_{12};\sigma_{12}][x_{21};\sigma_{21}][x_{22};\sigma_{22},\delta_{22}]$ the single parameter quantum matrix algebras generated by $\{ x'_{ij}\}$ and $\{x_{ij}\}$ respectively with $1 \leq i, j \leq 2$. Gaddis \cite[Proposition 3.1]{Gaddis2014} showed that $O_p(M_2(\Bbbk))$ and $O_q(M_2(\Bbbk))$ are isomorphic if and only if $p = q^{\pm 1}$. By using the results obtained in this section, it can be seen that the homomorphism $\phi:O_p(M_2(\Bbbk)) \to O_q(M_2(\Bbbk)) $ given by $\phi(x'_{ij}):= p_{ij} = x_{ij}$ is an isomorphism in the sense of cv-polynomials formulated in Definition \ref{definitioniteretaredn}.
\end{example}

\begin{theorem}
    Let $\phi$ be a homomorphism of the iterated Ore extensions as in Definition \ref{definitioniteretaredn}. $\phi$ is an isomorphism if and only if the associated cv-polynomials $p_i$ for $1\leq i \leq n$ have degree one respect to $x_i$.
\end{theorem}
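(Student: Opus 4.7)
The plan is to extend the arguments of Theorems \ref{isoOreextension} and \ref{isoOreextension=3} to the $n$-step setting: a downward induction on $j$ (from $j=n$ to $j=1$) handles the forward direction, while an upward inductive construction gives the inverse in the backward direction.

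For the only-if direction, I would assume $\phi$ is an isomorphism and hence surjective. Each $p_i$ involves only $x_1,\ldots,x_i$, so only $p_n$ carries $x_n$. Choose $g\in S'$ with $\phi(g)=x_n$ and expand $g=\sum_\alpha c_\alpha(x'_1)^{\alpha_1}\cdots(x'_n)^{\alpha_n}$; in normal form the $x_n$-degree of $p_1^{\alpha_1}\cdots p_n^{\alpha_n}$ equals $\alpha_n\cdot\deg_{x_n}(p_n)$, and comparing with $\deg_{x_n}(x_n)=1$ forces $\deg_{x_n}(p_n)=1$. Next pick $h\in S'$ with $\phi(h)=x_{n-1}$; since $p_n$ has $x_n$-degree one while $x_{n-1}$ has $x_n$-degree zero, $h$ must lie in $R[x'_1;\sigma'_1,\delta'_1]\cdots[x'_{n-1};\sigma'_{n-1},\delta'_{n-1}]$, and the same degree argument one step down gives $\deg_{x_{n-1}}(p_{n-1})=1$. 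Iterating downward yields $\deg_{x_j}(p_j)=1$ for every $j$.

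For the if direction, assume each $p_i$ has degree one in $x_i$ and write $p_i=f_i(x_1,\ldots,x_{i-1})\,x_i+g_i(x_1,\ldots,x_{i-1})$. As in Theorems \ref{isoOreextension} and \ref{isoOreextension=3}, the conditions $p_ir=\sigma'_i(r)p_i+\delta'_i(r)$ and $p_ip_j=\sigma'_i(p_j)p_i+\delta'_i(p_j)$ force $\sigma'_i=\sigma_{i,f_i}\circ\sigma_i$, so that $f_i$ is a unit in the lower subring and $\delta'_i=f_i\delta_i+\delta_{i(g_i,\sigma'_i)}$. I then build the inverse $\psi:S\to S'$ inductively: solving $x'_i=\psi(p_i)=\psi(f_i)\psi(x_i)+\psi(g_i)$ yields
\[
\psi(x_i)=\psi(f_i)^{-1}\bigl(x'_i-\psi(g_i)\bigr),
\]
which is well-defined because $\psi(f_i)$ and $\psi(g_i)$ depend only on the previously constructed $\psi(x_1),\ldots,\psi(x_{i-1})$. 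By construction $\psi\circ\phi$ fixes each generator $x'_i$, and $\phi\circ\psi=\operatorname{id}_S$ follows symmetrically.

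The main obstacle I anticipate is the forward direction: one must justify that the $x_j$-degree is additive under multiplication in the iterated Ore extension, so that leading $x_j$-terms of $p_1^{\alpha_1}\cdots p_n^{\alpha_n}$ do not cancel. This is clean when each layer is a domain (as in the Jordan plane or quantum matrix examples), but in the generality of Definition \ref{definitioniteretaredn} it demands tracking individual leading monomials with some care, precisely as was done case-by-case in the tables of Section \ref{Homon=2}. A secondary technical point is verifying that the inductively defined $\psi$ respects every cross-relation $x_jx_i=\sigma_j(x_i)x_j+\delta_j(x_i)$ of $S$; this reduces to applying $\psi$ to both sides and invoking the cv-polynomial conditions for the $p_i$'s supplied by Definition \ref{definitioniteretaredn} and Theorem \ref{teo2.12_paran}.
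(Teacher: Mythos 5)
Your proposal follows essentially the same route as the paper: the forward direction is the degree/surjectivity argument of Theorems \ref{isoOreextension} and \ref{isoOreextension=3} (to which the paper's proof simply defers, ruling out constant $p_i$ and $\deg_{x_i}p_i>1$ by contradiction), and the backward direction builds the inverse inductively via $\psi(x_i)=a_i^{-1}x'_i-a_i^{-1}g(\psi(x_1),\ldots,\psi(x_{i-1}))$, exactly as in the paper. If anything, you are more explicit than the paper about the two points it leaves implicit, namely the additivity of the $x_j$-degree under multiplication (no cancellation of leading terms) and the verification that the inductively defined $\psi$ respects the cross-relations.
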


\begin{proof}
    To prove the first implication, it suffices to follow the ideas of Theorem \ref{isoOreextension} and Theorem \ref{isoOreextension=3}, considering each $p_i$ for $1 \leq i \leq n $ as a constant polynomial or of degree greater than one and thus having a contradiction.

    Conversely, consider the homomorphism $\phi: S' \to S$ defined as $\phi(x'_i) = p_i = a_ix_i + g(x_1, \ldots, x_{i-1})$, where each of the polynomials $p_i$ have degree one respect to $x_i$. Following the same ideas of Theorem \ref{isoOreextension=3}, we have that each $p_i$ satisfies that $p_ip_j = \sigma'_i(p_j)p_i + \delta'_i(p_j)$ for $j < i$. Likewise, to verify that $\phi$ is an isomorphism, consider the homomorphism $\psi: S \to S'$ such that $\psi(x_i) = a_i^{-1}x'_i - a_i^{-1}g(\psi(x_1), \ldots, \psi(x_{i-1}))$. With this homomorphism it is straightforward to verify that $\psi \circ \phi = {\rm id}_{S'}$ and $\phi \circ \psi = {\rm id}_S$, checking that $\phi$ is an isomorphism.
\end{proof}

\begin{remark}
The results obtained in this section can be illustrated with the class of $q$-{\em skew polynomial rings} which have been investigated by some authors (e.g., Goodearl and Letzter \cite{GoodearlLetzter1994} and Haynal \cite{Haynal2008}). 
\end{remark}

\section{Future work}\label{future}

Due to the length of the paper, it is a pending task to investigate the version of Theorem \ref{LamLeroy1992Theorem2.16corrected} for two-step, three-step, and $n$-step iterated Ore extensions. Also, the question on cv-polynomials and homomorphisms by considering the nilpotent elements of the coefficient ring $R$ of the iterated Ore extension, such as in the works of Rimmer, Ferrero and Kishimoto, and Kikumasa, it is also of our interest. On the other hand, as a possible work concerning morphisms between noncommutative rings related with Ore extensions, we consider {\em Double Ore extensions} introduced and characterized by Zhang and Zhang \cite{ZhangZhang2008, ZhangZhang2009}. Our interest is due to Carvalho et al. \cite{Carvalhoetal2011} who described those double Ore extensions that can be presented as two-step iterated Ore extensions. In this way, it is natural to investigate the notions of homomorphism and cv-polynomial in the setting of these double extensions compare them with the results appearing in \cite{Carvalhoetal2011}.

\end{document}